\theoremstyle{plain}
\newtheorem{theorem}{Theorem}
\newtheorem{lemma}{Lemma}
\newtheorem{corollary}{Corollary}[section]
\newtheorem{proposition}{Proposition}[section]
\theoremstyle{definition}
\numberwithin{equation}{section}
\numberwithin{lemma}{section}
\numberwithin{theorem}{section}
\theoremstyle{thm}
\begin{document}
\begin{abstract} 
Let $d$ be a square-free positive integer and $h(d)$ the 
class number of the real quadratic field $\mathbb{Q}
{(\sqrt{d})}.$ In this paper we give an explicit lower bound for 
$h(n^2+r)$, where $r=1,4$, and also establish an equivalent criteria  
to attain this lower bound in terms of special value of Dedekind 
zeta function. Our bounds enable us to reduce the real quadratic families considered in Chowla and Yokoi's conjecture to comparatively small subfamily. We also give an equivalent criteria for having an alternate proof of both the conjectures. Also applying our results, we obtain some criteria for class group of prime power order to be cyclic.
\end{abstract}

\title[Lower bound for class number]{Lower bound for class 
number of certain real quadratic fields}
\author{Mohit Mishra}
\address{Mohit Mishra @Mohit Mishra, Harish-Chandra Research 
Institute, HBNI,
Chhatnag Road, Jhunsi,  Allahabad 211 019, India.}
\email{mohitmishra@hri.res.in}

\keywords{Real quadratic field, Class group, Class number, Dedekind zeta 
values}
\subjclass[2010] {Primary: 11R29, 11R42, Secondary: 11R11}
\maketitle

\section{\textbf{Introduction}}
It is interesting to find bounds 
for class number of a number field. H. Hasse 
\cite{Has65} 
and  H. Yokoi \cite{Yok68, Yok70}  studied lower bounds for  
class numbers of certain real quadratic fields.  Mollin 
\cite{Mol86,Mol87} generalized their results for certain real 
quadratic and biquadratic fields. The author along with 
Chakraborty and Hoque \cite{CHM19} derived a lower bound for 
$\mathbb{Q}(\sqrt{n^2+r})$, where 
$r=1,4$, to classify the class group of order $4$. The bound 
obtained was not so effective.\\
  Gauss conjectured that there exist 
infinitely many real quadratic fields of class number $1$, which 
is yet to be proved. More precisely, he conjectured that there 
exist infinitely many real quadratic fields of the form $
\mathbb{Q}(\sqrt{p}),~ p\equiv 1\pmod4$ of class number $1$. 
Many fruitful research have been done in this direction. 
In this connection, the following two
conjectures were given by Chowla \cite{CF76} and Yokoi \cite{YO86}.
\begin{itemize}
\item[(C)]\label{C1} If $d=m^2+1$ is a prime with $m>26$, then 
$h(d)>1$.

\item[(Y)]\label{C3} Let $d=m^2+4$ be a square-free integer for 
some positive integer $m$. Then there exist exactly $6$ real 
quadratic fields $\mathbb{Q}(\sqrt{d})$ of class number one, 
viz.
$m\in\{1,3,5,7,13,17\}$.
\end{itemize}

Mollin \cite{Mol872} considered $m^2+1$ to be square-free and proved that:

\begin{theorem}\label{thm1.1}
If $m\neq 1,2q$, where $q$ is a prime, then $h(m^2+1)>1$.
\end{theorem}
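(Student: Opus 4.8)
The plan is to prove the contrapositive: assume $h(m^2+1)=1$ and deduce that $m=1$, or $m=2$, or $m=2q$ with $q$ prime. Write $d=m^2+1$ (square-free by hypothesis), $K=\mathbb{Q}(\sqrt d)$, and let $\Delta_K$ be the discriminant of $K$. The engine is the classical correspondence between the reduced ideals of $\mathcal{O}_K$ lying in the principal class and the cycle obtained by running the continued fraction algorithm on the standard generator of $\mathcal{O}_K$: when $h(d)=1$, every prime ideal of $\mathcal{O}_K$ of norm below $\sqrt{\Delta_K}/2$ is principal, hence reduced, hence occurs in the principal cycle, so its norm is one of the (finitely many) ideal norms that show up along that cycle. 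The first task is therefore to run this algorithm for $d=m^2+1$; square-freeness is precisely what makes $\sqrt d$ (when $m$ is odd) and $(1+\sqrt d)/2$ (when $m=2k$ is even) the right generators, and the expansions are extremely short: the continued fraction of $\sqrt{m^2+1}$ is $[\,m;\overline{2m}\,]$ (period $1$), and that of $(1+\sqrt{m^2+1})/2$ is $[\,k;\overline{1,1,2k-1}\,]$ (period $3$). A direct inspection of the two cycles shows that the only norms of reduced principal ideals below the bound are $1$ in the odd case, and $1$ and $k=m/2$ in the even case.

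Next comes the odd case, where $d\equiv 2\pmod4$, $\mathcal{O}_K=\mathbb{Z}[\sqrt d]$, $\Delta_K=4d$, and $\sqrt{\Delta_K}/2=\sqrt d>m$. If $p$ is any prime dividing $m$, then $d\equiv1\pmod p$, so $\bigl(\tfrac dp\bigr)=1$ and $p$ splits in $K$; the prime $\mathfrak p$ over it has norm $p\le m<\sqrt{\Delta_K}/2$, so $h(d)=1$ forces $\mathfrak p$ to be a reduced principal ideal and hence $p\in\{1\}$, which is absurd. Thus $m$ has no prime factor, i.e.\ $m=1$.

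For the even case, $m=2k$ with $k\ge1$: here $d\equiv1\pmod4$, $\mathcal{O}_K=\mathbb{Z}\bigl[\tfrac{1+\sqrt d}{2}\bigr]$, $\Delta_K=d$, and $\sqrt{\Delta_K}/2=\sqrt d/2>k$. Suppose $k$ has an odd prime divisor $p$. Then $p\le k<\sqrt{\Delta_K}/2$ and, since $d\equiv1\pmod p$, $p$ splits; as $h(d)=1$ the reduced prime over $p$ is principal, so $p\in\{1,k\}$, which forces $p=k$, i.e.\ $k$ is prime. If instead $k$ is a power of $2$ with $k\ge2$, then $d=4k^2+1\equiv1\pmod8$, so $2$ splits, and $2<\sqrt{\Delta_K}/2$, so the same reasoning gives $2\in\{1,k\}$, i.e.\ $k=2$. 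In every sub-case $k$ is either $1$ or a prime, that is $m\in\{2\}\cup\{\,2q:q\text{ prime}\,\}$, which finishes the contrapositive.

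The step that needs genuine care --- and the one I would write out in full --- is the first one: pinning down exactly which positive integers below $\sqrt{\Delta_K}/2$ can be norms of $\mathcal{O}_K$-ideals. One has to handle the $d\equiv1\pmod4$ normalization correctly (the half-integer generator, and the bookkeeping by which the ideal norms read off the cycle are the values $Q_i/Q_0$ rather than the $Q_i$ themselves), verify that a prime ideal of norm below the bound is genuinely reduced --- so that it must appear in the principal cycle, not merely fail to possess a small generator --- and, if one prefers to argue through the Pell-type equation $x^2-dy^2=\pm4N$, check coprimality of the representing pair so that "$\mathfrak p$ principal'' really does say "$p$ is a norm''. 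Granting that lemma, the only additional input is the two trivial congruence facts used above ($p\mid m\Rightarrow p$ splits for odd $p$, and $4\mid m\Rightarrow 2$ splits), and the continued fraction computations are immediate because the period is $1$ or $3$.
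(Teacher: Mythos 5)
Your proof is correct in outline, but it takes a genuinely different route from the paper. The paper derives Theorem \ref{thm1.1} as a corollary of the quantitative lower bounds of \S 3: it computes the partial Dedekind zeta values $\zeta_{k_n}(-1,\mathcal{A}^r)$ for the classes of the primes above each divisor of $n$ via Lang's formula (Theorem \ref{thm2.1}, Lemmas \ref{lem2.1}--\ref{lem2.2}), observes that these values can only coincide with $\zeta_{k_n}(-1,\mathcal{P})$ when $n$ has the shape $1$ or $2p^r$, and concludes that the relevant classes are non-principal; Theorems \ref{thm3.1}--\ref{thm3.6} then force $h(d)>1$ whenever $m\neq 1,2q$. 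You instead use the classical Richaud--Degert infrastructure: the continued fraction of the generator of $\mathcal{O}_K$ has period $1$ or $3$, so the norms of reduced principal ideals are confined to $\{1\}$ (odd $m$) or $\{1,m/2\}$ (even $m$), and any prime dividing $m$ splits with norm below $\sqrt{\Delta_K}/2$, hence must lie in that short list. This is essentially Mollin's original argument for this theorem (the paper explicitly presents its zeta-value computation as an \emph{alternate} proof of Mollin's result), and it is more elementary in that it avoids special values of $L$-functions; what it does not give, and what the paper's method buys, are the explicit lower bounds $h(d)\geq t$, $h(d)\geq 2\sum a_i-m+1$, etc., together with the exact equality criteria in terms of $\zeta_{k_n}(-1)$ that drive \S\S 5--7. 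Your deferred lemma (primitive ideals of norm below $\sqrt{\Delta_K}/2$ are reduced, and reduced ideals of a class all occur in its cycle) is standard and the continued fraction expansions you quote are right. One small point worth recording: your argument honestly outputs $m\in\{1\}\cup\{2\}\cup\{2q: q \text{ prime}\}$, and $m=2$ (i.e.\ $d=5$, $h(5)=1$) is not literally of the form $2q$; this is an edge case in the statement itself rather than in your proof, and the paper's own remark after Theorem \ref{thm3.2} acknowledges $h(5)=1$.
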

 He also deduced that if $h(m^2+1)=1$, then $m^2+1$ is a prime. Assuming generalized Riemann hypothesis, Mollin and Williams \cite{MW88}  proved (C) in 1988.
Kim, Leu and Ono \cite{KLO87} proved that at least one of (C) 
and (Y) is true, and for the other case at most $7$ real 
quadratic fields $\mathbb{Q}(\sqrt{d})$ of class number $1$ are 
there. Finally, Bir\'{o} in \cite{ B03, BI03} proved (C) and (Y). For a given fixed number $h$,
it is also interesting to find necessary and 
sufficient conditions for a real quadratic field to have class 
number $h$. Yokoi \cite{YO86} established such kind of criteria 
for $\mathbb{Q}(\sqrt{4m^2+1})$, where $m$ is a positive integer, and he proved the following result:

\begin{theorem}\label{thm1.2}
$h(4m^2+1)=1$ if and only if $m^2-t(t+1)$ is a prime, where 
$1\leq t\leq m-1$.
\end{theorem}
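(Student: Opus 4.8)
The plan is to realize $K=\mathbb{Q}(\sqrt{4m^2+1})$ explicitly and, via the Minkowski bound, to reduce the condition $h=1$ to the solvability of norm equations of small size; the polynomial $m^2-t(t+1)$ then arises as a distinguished family of such norms. \textbf{Set-up.} Put $d=4m^2+1$ and $\omega=\frac{1+\sqrt d}{2}$, so (since $d\equiv1\pmod4$) $\mathcal{O}_K=\mathbb{Z}[\omega]$ with $\omega^2=\omega+m^2$, and $N(a+b\omega)=a^2+ab-b^2m^2$. For $m\ge2$ the fundamental unit is $\varepsilon_d=2m+\sqrt d=(2m-1)+2\omega$, of norm $-1$ (the case $m=1$ is trivial: the criterion is vacuous and $h(5)=1$, so assume $m\ge2$). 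As the Minkowski bound satisfies $\tfrac12\sqrt d<m+1$, we have $h(d)=1$ if and only if every prime ideal of norm $\le m$ is principal, i.e.\ if and only if for each split or ramified prime $p\le m$ there is $\gamma\in\mathcal{O}_K$ with $|N(\gamma)|=p$. The basic computation is $N(t+\omega)=t^2+t-m^2$, so for $1\le t\le m-1$ the principal ideal $(t+\omega)$ has norm $m^2-t(t+1)$, strictly decreasing in $t$ and equal to $m$ at $t=m-1$.

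\textbf{Key Lemma.} \emph{For $m\ge2$, any $\gamma\in\mathcal{O}_K$ with $0<|N(\gamma)|<m$ has $|N(\gamma)|$ a perfect square}; in particular no prime $p<m$ is a norm from $K$. I would prove this by a unit reduction. Writing $n=|N(\gamma)|$, replace $\gamma$ by $\varepsilon_d^{\,k}\gamma$ for the unique integer $k$ with $|\varepsilon_d^{\,k}\gamma|\in[\,n^{1/2}\varepsilon_d^{-1/2},\,n^{1/2}\varepsilon_d^{1/2})$; then both real conjugates of the new $\gamma=a+b\omega$ have absolute value $\le(n\varepsilon_d)^{1/2}<(m\varepsilon_d)^{1/2}$, so $|b|\sqrt d=|(a+b\omega)-(a+b\bar\omega)|<2(m\varepsilon_d)^{1/2}$; using $\varepsilon_d<4m+1$ this forces $|b|\le2$. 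The three cases are elementary: $|b|=0$ gives $|N(\gamma)|=a^2$; $|b|=1$ gives $|N(\gamma)|=|a^2+a-m^2|\ge m$ for every integer $a$, since $a(a+1)$ never lies in $(m^2-m,\,m^2+m)$; and $|b|=2$ gives $|N(\gamma)|=|(a+1)^2-d|$, whose only value below $m$ is $1$ (at $a+1=\pm2m$).

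\textbf{The two implications.} $(\Leftarrow)$ Suppose each $m^2-t(t+1)$, $1\le t\le m-1$, is prime but $h(d)>1$. Minkowski yields a non-principal prime ideal $\mathfrak{q}$ with $q:=N\mathfrak{q}\le m$; then $q$ is split or ramified, so $t^2+t-m^2\equiv0\pmod q$ is solvable, and a short residue count (using $q\le m$, and that $t=q-1$ works when $q\mid m$) gives $t\in\{1,\dots,m-1\}$ with $q\mid m^2-t(t+1)=|N(t+\omega)|$. Primality forces $m^2-t(t+1)=q$, so $(t+\omega)$ is a prime ideal above $q$; hence $\mathfrak{q}$, possibly after conjugation, is principal --- a contradiction. $(\Rightarrow)$ Suppose $h(d)=1$ and fix $t\in\{1,\dots,m-1\}$. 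If $m^2-t(t+1)$ were composite it would have a prime factor $p\le\sqrt{m^2-t(t+1)}<m$; from $p\mid m^2-t(t+1)$ one gets $d\equiv(2t+1)^2\pmod p$, so $p$ is not inert, and the prime above $p$ dividing $(t+\omega)$ is then principal with a generator of norm $\pm p$ --- impossible by the Key Lemma. Hence $m^2-t(t+1)$ is prime.

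\textbf{Main obstacle.} Everything but the Key Lemma is Minkowski plus elementary congruences; the real content is the Key Lemma, namely the gap in the spectrum of small values of the indefinite form $a^2+ab-b^2m^2$. Crucially, the explicit size $\varepsilon_d\approx4m$ of the fundamental unit is what lets one balance the conjugates of $\gamma$ at size $\approx\sqrt m$ and hence bound $|b|$ by $2$; a softer bound (e.g.\ Legendre's theorem on continued fractions) loses a constant and does not reach primes all the way up to $m$.
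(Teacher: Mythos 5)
Your proof is correct, but it takes a genuinely different route from the paper. The paper presents this statement as Yokoi's theorem and does not reprove it directly; its own closely related derivation (the computation in $\S 5$ feeding into Theorem \ref{thm1.3}) goes entirely through special zeta values: Siegel's formula (Proposition \ref{prop2.1}) writes $\zeta_{k}(-1)=\frac{1}{30}\sum_{n=0}^{p-1}\sigma\left(p^2-n(n+1)\right)$ for $k=\mathbb{Q}(\sqrt{4p^2+1})$, the Byeon--Kim class-number-one criterion identifies $h=1$ with the exact value $\frac{8p^3+28p}{360}$, and equality in $\sigma(N)\ge N+1$ detects the primality of each $p^2-n(n+1)$. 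You instead use the Minkowski bound together with a gap in the spectrum of small values of the norm form $a^2+ab-b^2m^2$, proved by reducing an element of norm $n<m$ by powers of the unit $2m+\sqrt{d}$ to force $|b|\le 2$ --- essentially Yokoi's original elementary method. I checked the details and they hold: the reduction indeed gives $|b|\le 2$ (with $n\le m-1$ one even gets $|b|\le 1$), the cases $|b|=0,1,2$ produce only square norms below $m$ because $a(a+1)$ never falls in the interval $(m^2-m,\,m^2+m)$ and the only square within $m$ of $d$ is $4m^2$, and both implications then follow from your residue count and the factor bound $\sqrt{m^2-t(t+1)}<m$. The trade-off: your argument is self-contained and pinpoints exactly which prime ideals are principal, while the paper's zeta-value method needs the Byeon--Kim criterion as input but is the one that scales to the lower bounds $h(d)\ge t$ and the equivalent criteria for larger class numbers that are the paper's main results. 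One shared caveat: both the statement and your proof tacitly assume $4m^2+1$ square-free (the paper's standing hypothesis); e.g.\ for $m=9$ one has $4m^2+1=325=25\cdot 13$, the list $\{m^2-t(t+1)\}$ contains the composite $75$, yet $h(\mathbb{Q}(\sqrt{13}))=1$.
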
 

 Hoque and Saikia \cite{HS15} showed that there are no 
real quadratic fields of the form $\mathbb{Q}(\sqrt{9(8n^2 + r) + 2})$, where $n\geq 1$ and $r = 5, 7$, with class number $1$. Analogously, Chakraborty and Hoque \cite{CH-18} proved that none of the real quadratic fields $\mathbb{Q}(\sqrt{n^2p^2 + 1})$ have class number $1$ when $p\equiv \pm 1 \pmod 8$ is a prime and $n$ is an odd integer. In \cite{CH-19}, they proved that if $d$ is a square-free part of $an^2 + 2$, where $a = 9, 196$ and $n$ is an odd integer, then the class number of
$\mathbb{Q}(\sqrt{d})$ is greater than $1$. Byeon and Kim \cite{BK1,BK2} 
obtained an equivalent criteria 
for R-D type real quadratic fields to have class number $1$ and 
$2$. In \cite{CHM18}, the author along with Chakraborty 
and Hoque  obtained analogous criteria for class 
number $3$.


  In this paper, we establish an efficient lower bound for the
class number of $\mathbb{Q}(\sqrt{n^2+r})$, 
where $r=1,4$. Applying these results on Chowla's
conjecture(C), we gave an alternate proof of Theorem \eqref{thm1.1}. We also extended Theorem \eqref{thm1.2} and proved that:

\begin{theorem}\label{thm1.3}
 If $m^2+1$ is a square-free integer, then $h(m^2+1)=1$ if and only if $m=1 \text { or } 2p$, for some prime $p$, and $p^2-t(t+1)$ is a prime, 
$\forall \hspace*{2mm} 1\leq t\leq p-1$.
\end{theorem}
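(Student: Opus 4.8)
The plan is to derive Theorem~\ref{thm1.3} by combining Mollin's Theorem~\ref{thm1.1} (equivalently, the alternate proof of it that the earlier sections extract from the new explicit lower bound for $h(n^2+r)$) with Yokoi's criterion, Theorem~\ref{thm1.2}. The only elementary observation needed is that when $m=2p$ one has $m^2+1=4p^2+1$, so that $\mathbb{Q}(\sqrt{m^2+1})=\mathbb{Q}(\sqrt{4p^2+1})$ and Theorem~\ref{thm1.2} may be applied with its parameter taken to be $p$. Note that the standing hypothesis that $m^2+1$ is square-free is exactly what guarantees that $4p^2+1$ is square-free in the case $m=2p$, which is the requirement under which Yokoi's R--D type criterion is valid.

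For the forward implication, suppose $m^2+1$ is square-free and $h(m^2+1)=1$. By Theorem~\ref{thm1.1} we must have $m=1$ or $m=2p$ for some prime $p$. If $m=1$ there is nothing further to prove. If $m=2p$, then $h(4p^2+1)=h(m^2+1)=1$, and since $4p^2+1$ is square-free, Theorem~\ref{thm1.2} with parameter $p$ forces $p^2-t(t+1)$ to be prime for every $t$ with $1\le t\le p-1$, which is the asserted condition.

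For the converse: if $m=1$ then $m^2+1=2$ and $h(\mathbb{Q}(\sqrt{2}))=1$. If $m=2p$ with $p$ prime and $p^2-t(t+1)$ prime for all $1\le t\le p-1$, then $4p^2+1=m^2+1$ is square-free, so Theorem~\ref{thm1.2} applied with parameter $p$ yields $h(m^2+1)=h(4p^2+1)=1$. This completes both directions.

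The deduction is short once Theorems~\ref{thm1.1} and~\ref{thm1.2} are available, so there is no genuine obstacle; the two points to handle carefully are (i) verifying that square-freeness of $m^2+1$ does transfer to square-freeness of $4p^2+1$ and that this is all that Yokoi's criterion requires, and (ii) the degenerate value $p=1$ (equivalently $m=2$), where the range $1\le t\le p-1$ is empty and the primality condition is to be read as vacuously satisfied, consistently with $h(\mathbb{Q}(\sqrt{5}))=1$. The substantive work lies not in this argument but in the preceding sections, where the explicit lower bound for $h(n^2+r)$ is proved and used to recover Theorem~\ref{thm1.1} independently of Mollin's original method; Theorem~\ref{thm1.3} is then essentially a bookkeeping consequence of that together with Yokoi's theorem.
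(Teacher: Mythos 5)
Your proof is correct, and its first half---reducing to $m=1$ or $m=2p$ via Theorem \ref{thm1.1}, which the paper re-proves from the lower bounds of Theorems \ref{thm3.1}--\ref{thm3.6}---coincides with the paper's. The second half is genuinely different in route: you invoke Yokoi's criterion (Theorem \ref{thm1.2}) with parameter $p$ as a black box, whereas the paper re-derives exactly that criterion from its own machinery, namely Siegel's formula (Proposition \ref{prop2.1}), which gives $\zeta_{k_p}(-1)=\tfrac{1}{30}\sum_{n=0}^{p-1}\sigma\bigl(p^2-n(n+1)\bigr)\ge \tfrac{8p^3+28p}{360}$ with equality precisely when every $p^2-n(n+1)$, $1\le n\le p-2$, is prime, combined with Byeon--Kim's criterion $h(4p^2+1)=1\Leftrightarrow\zeta_{k_p}(-1)=\tfrac{8p^3+28p}{360}$. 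Your route is shorter and makes transparent that Theorem \ref{thm1.3} is a formal consequence of Theorems \ref{thm1.1} and \ref{thm1.2}; the paper's route buys self-containedness and, more importantly, a template that transfers verbatim to the $m^2+4$ family (Theorem \ref{thm1.5}) and to the bounds on the least quadratic residue $g(d)$ in Theorems \ref{thm5.3}--\ref{thm5.4}, where no ready-made analogue of Theorem \ref{thm1.2} is available to cite. Two small points: (i) the range $1\le t\le p-1$ you inherit from Theorem \ref{thm1.2} agrees with the paper's $1\le n\le p-2$ because $t=p-1$ gives $p^2-(p-1)p=p$, which is automatically prime; (ii) the degenerate case $m=2$ (so $d=5$ and $h(5)=1$, yet $2\ne 2p$ for any prime $p$) that you flag is a genuine edge case left implicit in the paper's own statement---your reading of it as $p=1$ with a vacuously satisfied primality condition is the sensible repair, and the paper only addresses it obliquely in the remark following Theorem \ref{thm3.2}.
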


Thus Chowla's conjecture is equivalent to proving that \textit{the set $\{p^2-n(n+1)\}_{n=1}^{p-2}$ always contains a composite number, except for $p = 2,3,5,7 \text { and } 13$}.
\\

We also obtained similar kind of results for Yokoi's conjecture and proved the following results;

\begin{theorem}\label{thm1.4}
Let $m^2+4$ is a square-free integer and if $h(m^2+4)=1$, then  $m=1 \text{ or } p$, for some prime $p$.
\end{theorem}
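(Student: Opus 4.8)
The plan is to prove the contrapositive: assuming $d:=m^{2}+4$ is square-free and $h(d)=1$, I will force $m$ to be $1$ or prime. First a harmless reduction. If $m$ were even then $4\mid d$ with $4$ a perfect square, contradicting square-freeness; hence $m$ is odd, so $d\equiv 5\pmod 8$, $\mathcal{O}_{d}=\mathbb{Z}\!\left[\tfrac{1+\sqrt d}{2}\right]$, the rational prime $2$ is inert, and the fundamental unit is $\varepsilon_{d}=\tfrac{m+\sqrt d}{2}$ with $N(\varepsilon_{d})=-1$. Suppose toward a contradiction that $m$ is composite and let $p$ be its least prime divisor, so $p$ is odd and $3\le p\le\sqrt m$.

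Since $p\mid m$ we get $d\equiv 4\pmod p$, so $\left(\tfrac{d}{p}\right)=\left(\tfrac{4}{p}\right)=1$ and $p$ splits: $p\mathcal{O}_{d}=\mathfrak{p}\,\overline{\mathfrak{p}}$ with $N(\mathfrak{p})=p$. The crucial point is that $\mathfrak{p}$ is a \emph{reduced} ideal: indeed $p\le\sqrt m<\tfrac12\sqrt{m^{2}+4}=\tfrac12\sqrt d$ (equivalently $(m-2)^{2}>0$), and a primitive integral ideal whose norm is below $\tfrac12\sqrt d$ is reduced. On the other hand, $\mathbb{Q}(\sqrt{m^{2}+4})$ is a Richaud--Degert field: the continued fraction expansion of $\tfrac{1+\sqrt d}{2}$ is purely periodic of period one, namely $\bigl[\tfrac{m+1}{2};\overline{m}\,\bigr]$, so the principal ideal class contains exactly one reduced ideal, namely $\mathcal{O}_{d}$ itself. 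But $h(d)=1$ makes $\mathfrak{p}$ principal, whence $\mathfrak{p}=\mathcal{O}_{d}$ and $p=N(\mathfrak{p})=1$ --- absurd. Therefore $m$ is $1$ or prime, which is consistent with Yokoi's list $\{1,3,5,7,13,17\}$.

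If one wishes to avoid the infrastructure language, the same conclusion can be reached by an elementary descent. Principality of $\mathfrak{p}$ gives $\alpha=\tfrac{x+y\sqrt d}{2}\in\mathcal{O}_{d}$ with $x\equiv y\pmod 2$ and $x^{2}-dy^{2}=\pm 4p$; replacing $\alpha$ by the suitable associate $\pm\varepsilon_{d}^{\,k}\alpha$ one may assume $\lvert x-y\sqrt d\rvert\le 2\sqrt p$, which (using $\varepsilon_{d}<m+\tfrac1m$ and $\sqrt d>m$) forces $\lvert y\rvert$ to be smaller than roughly $\sqrt p\le m^{1/4}$, in particular $\lvert y\rvert<m/2$. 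Rewriting the norm equation as $(x-my)(x+my)=4(y^{2}\pm p)$ and noting that $x\equiv my\pmod 2$ makes both factors even, a short case analysis on the sign and on whether $y^{2}$ exceeds $p$ yields in every case an inequality of the shape $p>\lvert y\rvert\,(m-\lvert y\rvert)\ge m/2$ (or $\lvert y\rvert>m$), contradicting $p\le\sqrt m$. The genuinely delicate step in either route is the same: producing a bounded representative --- choosing the right power of $\varepsilon_{d}$, or equivalently knowing that the principal cycle of this Richaud--Degert field is trivial --- and then checking that the least prime factor of $m$ really does sit below the ``reduced'' threshold; once that is in place the remaining estimates are routine.
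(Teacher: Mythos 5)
Your argument is correct, but it follows a genuinely different route from the paper. The paper deduces Theorem 1.4 as an immediate corollary of Theorems 4.1 and 4.2: for each odd prime $p\mid n$ one computes, via Lang's formula for partial Dedekind zeta values, that $\zeta_{k_n}(-1,\mathcal{A}^r)=\frac{n^3+n(p^{4r}+10p^{2r})}{360p^{2r}}$ for the class $\mathcal{A}$ of the split ideal $\left(p,\frac{p+2+\sqrt d}{2}\right)$, and comparison with $\zeta_{k_n}(-1,\mathcal{P})=\frac{n^3+11n}{360}$ shows these classes are non-principal and pairwise distinct unless $n=p^r$; this yields quantitative lower bounds ($h(d)\ge t$ for $n=p^t$, $h(d)\ge 2\sum a_i-m+1$ or $2(a_1+a_2)-2$ for several prime factors), of which the theorem is the case $h(d)=1$. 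You instead take the least prime factor $p\le\sqrt m$ of a composite $m$, observe it splits and sits below the reduction threshold $\tfrac12\sqrt d$, and invoke the period-one continued fraction $\bigl[\tfrac{m+1}{2};\overline{m}\,\bigr]$ of $\tfrac{1+\sqrt d}{2}$ to conclude the only reduced principal ideal is $\mathcal{O}_d$ --- a clean contradiction. This is the classical Richaud--Degert/infrastructure argument (in the spirit of Mollin's lower-bound papers cited in the introduction); it is shorter and more elementary for the qualitative statement, but it does not produce the explicit zeta-value equality criteria and graded lower bounds that the paper needs later for the cyclicity results in Sections 6--7, which is precisely what the zeta-value method buys. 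Two small remarks: your opening sentence announces a contrapositive but you actually argue directly by contradiction, and your normalization $N(\varepsilon)=-1$ is the standard (and correct) one, whereas the paper writes $N(\varepsilon)=-4$; neither affects the argument. Your second, ``elementary descent'' paragraph is only a sketch and would need the case analysis written out, but the first argument is complete on its own given the standard facts it cites.
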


\begin{theorem}\label{thm1.5}
If $m^2+4$ is a square-free integer, then $h(m^2+4)=1$ if and only if $m=1 \text{ or } p$, for some prime $p$, and $1+pt-t^2$ is a prime,  $\forall \hspace{2mm} 1 \leq t \leq \frac{p-1}{2}$.
\end{theorem}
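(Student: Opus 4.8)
The plan is to reduce to $d=p^{2}+4$ via Theorem~\ref{thm1.4} and then run a Rabinowitsch-type argument controlled by a single estimate for small-norm elements. Throughout I would work with $d=m^{2}+4$: the case $m=1$ gives $d=5$, $h(d)=1$, with the primality condition empty; by Theorem~\ref{thm1.4} every other case with $h(d)=1$ has $m=p$ prime, and the reverse implication presupposes $m=p$, so it suffices to prove both implications for $d=p^{2}+4$ with $d$ square-free ($p=2$ gives $d=8$, so $p$ is odd). Then $p^{2}\equiv1\pmod 8$ forces $d\equiv5\pmod 8$, whence $2$ is inert, $\mathcal{O}_{K}=\mathbb{Z}[\omega]$ with $\omega=\tfrac{1+\sqrt d}{2}$, the fundamental unit is $\varepsilon=\tfrac{p+\sqrt d}{2}$ with $N(\varepsilon)=-1$, and the Minkowski bound is $\tfrac12\sqrt d$. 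The key elements are $\alpha_{t}:=\varepsilon-t=\bigl(\tfrac{p-1}{2}-t\bigr)+\omega\in\mathcal{O}_{K}$: one computes $N(\alpha_{t})=t^{2}-pt-1=-(1+pt-t^{2})$; since $1+p(p-t)-(p-t)^{2}=1+pt-t^{2}$ the values for $1\le t\le\tfrac{p-1}{2}$ already exhaust the relevant ones, and for these $t$ one has $0<1+pt-t^{2}\le\tfrac{p^{2}+3}{4}<\tfrac d4$, so a composite such value has a prime divisor below $\tfrac12\sqrt d$. Finally, $\alpha_{t}$ has $\mathbb{Z}[\omega]$-coordinates $\bigl(\tfrac{p-1}{2}-t,\,1\bigr)$, so no rational prime divides $\alpha_{t}$ in $\mathcal{O}_{K}$.

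The engine is the following estimate, of the kind that underlies the lower bounds in this paper: \emph{if $\mu\in\mathcal{O}_{K}$ satisfies $0<|N(\mu)|<\tfrac12\sqrt d$ then $|N(\mu)|$ is a perfect square; in particular it is never prime.} To prove it I would replace $\mu$ by a suitable $\pm\varepsilon^{k}\mu$ and, if needed, by $\overline\mu$, so that, writing $\mu=\tfrac{u+v\sqrt d}{2}$, one may assume $u\ge0$ and $v\ge1$ (the case $v=0$ being immediate); then $u+v\sqrt d\ge v\sqrt d$ gives $|u-v\sqrt d|=\tfrac{4|N(\mu)|}{u+v\sqrt d}<\tfrac 2v$, so $u/v$ is a very good rational approximation to $\sqrt d$. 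Using the explicit expansion $\sqrt d=[\,p;\,\overline{\tfrac{p-1}{2},1,1,\tfrac{p-1}{2},2p}\,]$ (period $5$, with auxiliary denominators $Q_{i}\in\{1,4,p\}$) together with the parity constraint $u\equiv v\pmod2$, one checks that the only admissible value is $u=vp$, which yields $|N(\mu)|=v^{2}$; any other choice would give $|N(\mu)|=p\ge\tfrac12\sqrt d$, excluded since $p\ge2$. Carrying out this case analysis rigorously — in particular handling non-coprime pairs $(u,v)$ and the range where $v$ is comparable to $\varepsilon/2$ — is the one genuinely delicate point, and it is exactly what the paper's main lower bound supplies, so one may simply quote it.

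Granting the estimate, the forward implication is quick. Suppose $h(d)=1$ while some $1+pt_{0}-t_{0}^{2}$ with $1\le t_{0}\le\tfrac{p-1}{2}$ is composite; it then has a prime divisor $q\le\sqrt{1+pt_{0}-t_{0}^{2}}<\tfrac12\sqrt d$. As $q\mid N\bigl((\alpha_{t_{0}})\bigr)$ and no rational prime divides $\alpha_{t_{0}}$, the prime $q$ is not inert, so some prime ideal $\mathfrak{q}\mid(\alpha_{t_{0}})$ of norm $q$ exists; since $h(d)=1$ it is principal, giving an element of norm $\pm q$ with $q$ prime and $q<\tfrac12\sqrt d$, contradicting the estimate. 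Hence every $1+pt-t^{2}$, $1\le t\le\tfrac{p-1}{2}$, is prime.

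For the converse, suppose $h(d)>1$. By Minkowski's bound there is a non-principal prime ideal $\mathfrak{q}$ with $N\mathfrak{q}<\tfrac12\sqrt d$; being non-principal, $\mathfrak{q}$ is not the inert prime $(q)$, so $N\mathfrak{q}=q$ for a rational prime $q$ that splits or ramifies, whence $q$ is odd (as $2$ is inert) and, since $\tfrac12\sqrt d<\tfrac{p+1}{2}$, we have $q\le\tfrac{p-1}{2}$. Because $\bigl(\tfrac dq\bigr)\ne-1$, the congruence $t^{2}-pt-1\equiv0\pmod q$ has a solution, which is nonzero mod $q$ (since $1+p\cdot0-0^{2}=1$) and may be taken with $1\le t_{0}\le q-1\le\tfrac{p-1}{2}$; hence $q\mid1+pt_{0}-t_{0}^{2}$. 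If $1+pt_{0}-t_{0}^{2}$ equalled $q$, then $(\alpha_{t_{0}})$ would be an ideal of norm $q$, hence $\mathfrak{q}$ or $\overline{\mathfrak{q}}$, so one of $(\alpha_{t_{0}})$, $(\overline{\alpha_{t_{0}}})$ would equal $\mathfrak{q}$, contradicting non-principality; therefore $1+pt_{0}-t_{0}^{2}$ is a proper multiple of the prime $q$, hence composite. So not all $1+pt-t^{2}$ are prime, and the contrapositive of this completes the proof.
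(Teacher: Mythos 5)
Your route is genuinely different from the paper's. The paper first reduces to $m=p$ via Theorem \ref{thm1.4} (itself a consequence of Theorems \ref{thm4.1}--\ref{thm4.2}), then evaluates $\zeta_{k_p}(-1)=\frac{1}{30}\sum_{n=0}^{(p-1)/2}\sigma(1+pn-n^2)$ from Siegel's finite-sum formula (Proposition \ref{prop2.1}) and compares with the Byeon--Kim criterion $h(p^2+4)=1\Longleftrightarrow\zeta_{k_p}(-1)=\frac{p^3+11p}{360}$, so that class number one becomes exactly the statement that $\sigma(N)=N+1$, i.e.\ primality, for each $N=1+pn-n^2$. You instead run the classical Rabinowitsch--Yokoi argument with the elements $\alpha_t=\varepsilon-t$ and the Minkowski bound. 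Your norm computation $N(\alpha_t)=-(1+pt-t^2)$, the bound $1+pt-t^2\le\frac{p^2+3}{4}<\frac d4$, and the entire converse direction (a non-principal prime $\mathfrak q$ of norm $q\le\frac{p-1}{2}$ forces some $1+pt_0-t_0^2\ge p>q$ to be a proper multiple of $q$) are correct; note the converse does not even use your key estimate.

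The gap is in the forward direction, which rests entirely on the assertion that $0<|N(\mu)|<\frac12\sqrt d$ forces $|N(\mu)|$ to be a perfect square. That assertion is true, but (i) it is \emph{not} what this paper's lower bounds supply --- Theorems \ref{thm4.1}--\ref{thm4.2} are inequalities for $h(d)$ derived from partial zeta values and say nothing about norms of elements --- so you cannot ``simply quote'' the paper for it; and (ii) your own continued-fraction sketch does not close, because for $\mu=\frac{u+v\sqrt d}{2}$ with $u,v$ odd you only obtain $\left|\frac uv-\sqrt d\right|<\frac{2}{v^2}$, which is weaker than the Legendre threshold $\frac{1}{2v^2}$ needed to conclude that $u/v$ is a convergent; the reduction to the list $Q_i\in\{1,4,p\}$ is therefore precisely the step left unproved. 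The clean repair is ideal-theoretic: a primitive ideal of norm less than $\frac12\sqrt d$ is reduced, the reduced principal ideals correspond to the complete quotients in the period of $\omega_d=\frac{1+\sqrt d}{2}=[\frac{p+1}{2};\overline{p}\,]$, and that period has length one with associated denominator $Q=2$, so the trivial ideal is the only reduced principal one; writing $(\mu)=m\mathfrak a$ with $\mathfrak a$ primitive then gives $|N(\mu)|=m^2$. With that lemma proved in this way (or cited to the Richaud--Degert literature rather than to this paper), your argument is complete and yields a more elementary proof than the zeta-value computation.
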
 
 
Thus Yokoi's conjecture is equivalent to proving that \textit{  $\{1+pt-t^2\}_{t=1}^{\frac{p-1}{2}}$ always contains a 
composite number, except for $p =3,5,7,
13 \text { and } 17$}. \\
 
Chowla and Friedlander \cite{CF76} proved that if $m^2+1$ is a prime with $m>2$ and $h(m^2+1)=1$, then $g(m^2+1)$ is $\frac{m}{2}$, where $g(n)$ is the least prime 
which is a quadratic residue of $n$.
We generalize this result and give an upper bound on 
$g(1+4p^2)$, where $p$ is a prime and $h(1+4p^2)>1$, and for $g(p^2+4)$, where $p$ is a prime and $h(p^2+4)>1$. 
We also give some criteria for prime power order class group of a
real quadratic field $\mathbb{Q}(\sqrt{n^2+r})$, where $r=1,4$, 
to be cyclic.

\subsection*{Notations and structure of paper} Throughout this 
paper $d=n^2+r$ be a square-free integer. Let $h(d)$ and $
\mathfrak{C}(k_n)$ denote the class number and the class group 
of a real quadratic field $k_n=\mathbb{Q}(\sqrt{d})$, 
respectively. $\mathcal{P}$ will 
always denote the principal ideal class in the class group and $N(\mathcal{I})$ denote the norm of an ideal $\mathcal{I}$.  $p^t \mid \mid n$ means that $p^t \mid n$ but $p^{t+1} \nmid n$.
In $\S 2$  we stated the results on computing the partial 
Dedekind zeta values of a real quadratic field. In $\S3$ 
and $\S4$ we compute the partial Dedekind zeta values and with 
some group theoretic arguments we deduce a lower bound for 
class number of  $\mathbb{Q}(\sqrt{n^2+r})$ for $r=1,4$. In $
\S5$ we apply our results on Chowla and Yokoi's 
conjecture and deduce some results. In $\S6$ the class groups of 
prime power order is being studied. In $\S7$ we deduce some 
corollaries of our main results of $\S3$ and $\S4$. We conclude  this paper with some remarks.

\section{\textbf{Partial Dedekind zeta values}}
Let $k$ be a real quadratic field, and $\zeta_{k}(s)$ be the 
Dedekind zeta function attached to $k$. Siegel \cite{SI69} derived an expression for the Dedekind zeta 
values at $1-2n$, where $n$ is a positive integer. For $n=1$, 
this expression becomes simpler:

\begin{proposition}\label{prop2.1}
Let $D$ be the discriminant of $k$. Then 
$$\zeta_k(-1)=\frac{1}{60}\sum_{\substack{ |t|<\sqrt{D}\\ 
t^2\equiv D\pmod 4}}\sigma\left(\frac{D-t^2}{4}\right),$$
where $\sigma(n)$ denotes the sum of divisors of $n$.\
\end{proposition}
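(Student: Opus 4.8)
The statement is the $n=1$ case of Siegel's general formula \cite{SI69} for $\zeta_k(1-2n)$, so my plan is to obtain it by specialization and simplification; I also indicate a more self-contained route in case one does not want to quote Siegel in full strength.

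\emph{Route via Siegel.} Siegel expresses $\zeta_k(1-2n)$, for $k$ real quadratic of discriminant $D$, as a rational linear combination (with coefficients built from Bernoulli numbers and the weight-$2n$ normalization) of the sums $\sum_t \sigma_{2n-1}\!\left(\frac{D-t^2}{4}\right)$, where $t$ runs over the integers with $t^2<D$ and $t^2\equiv D\pmod 4$ — exactly the condition that $\frac{D-t^2}{4}$ is a positive integer. First I would record Siegel's identity in this form; then I would set $n=1$, so that $\sigma_{2n-1}=\sigma_1=\sigma$, and track how the Bernoulli factors (coming essentially from $B_2=\tfrac16$ and $\zeta(-1)=-\tfrac1{12}$) collapse to the single constant $\tfrac1{60}$. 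One should also note that no boundary term from $t^2=D$ can occur, since a field discriminant is never a perfect square, so $\sigma$ is always evaluated at a genuine positive integer and no $\sigma(0)$ convention is needed.

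\emph{Self-contained route.} Alternatively, start from the factorization $\zeta_k(s)=\zeta(s)\,L(s,\chi_D)$, with $\chi_D$ the Kronecker symbol attached to $k$. Evaluating at $s=-1$ and using $\zeta(-1)=-\tfrac1{12}$ together with $L(-1,\chi_D)=-\tfrac12 B_{2,\chi_D}$, where $B_{2,\chi_D}=D\sum_{a=1}^{D}\chi_D(a)B_2(a/D)$ is the generalized Bernoulli number, gives $\zeta_k(-1)=\tfrac1{24}B_{2,\chi_D}$. It then remains to prove the arithmetic identity $B_{2,\chi_D}=\tfrac25\sum_{|t|<\sqrt D,\ t^2\equiv D\,(4)}\sigma\!\left(\tfrac{D-t^2}{4}\right)$ (one can reassure oneself with $D=5$ and $D=8$, which should give $\zeta_k(-1)=\tfrac1{30}$ and $\tfrac1{12}$). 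This identity is obtained by recognizing $\sum_D\big(\sum_t\sigma(\tfrac{D-t^2}{4})\big)q^D$ as a Fourier coefficient of a weight-$2$ (quasi)modular object — concretely, of the product of the Eisenstein series $E_2$ (together with its non-holomorphic completion $E_2^{*}$) with a Jacobi theta series — and comparing it with the Eisenstein series associated to $\chi_D$.

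\emph{Main obstacle.} Whichever route one takes, the substantive point is the passage between the character-sum (Bernoulli) shape of $\zeta_k(-1)$ and the divisor-sum shape: in the first route this is hidden inside Siegel's computation, in the second it is precisely the modular-forms identity above. The only remaining care is bookkeeping — pinning down the exact rational constant $\tfrac1{60}$, and handling the congruence condition $t^2\equiv D\pmod 4$, which looks different for $D\equiv 0$ and $D\equiv 1\pmod 4$ but amounts in both cases merely to forcing $\tfrac{D-t^2}{4}\in\mathbb{Z}_{>0}$.
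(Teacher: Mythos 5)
The paper gives no proof of Proposition~\ref{prop2.1} at all: it is simply quoted from Siegel's paper as the $n=1$ case of his formula for $\zeta_k(1-2n)$, which is exactly your first route, and your normalization is right (your sanity checks $\zeta_k(-1)=\frac{1}{30}$ for $D=5$ and $\frac{1}{12}$ for $D=8$ both agree with the stated formula). Your second, self-contained route is a reasonable outline but defers all of the substance to the unproven identity $B_{2,\chi_D}=\frac{2}{5}\sum_{t}\sigma\left(\frac{D-t^2}{4}\right)$, so as written only the citation route is a complete argument --- which is precisely what the paper itself does.
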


Lang gave another method to compute $\zeta_k(-1)$ by computing partial 
Dedekind zeta values and summing them up. For an ideal class $
\mathfrak{A}\subset k$, consider an integral ideal $\mathfrak{a}
$ in $\mathfrak{A}^{-1}$ with integral basis $\{r_{1},r_{2}\}$. Let $r_1'$ and  $r_2'$ be the 
conjugates of $r_1$ and $r_2$, respectively, and $$
\delta(\mathfrak{a}):= r_1r_2'-r_1'r_2.$$

Let $\varepsilon$ be the fundamental unit of $k$. Then  one has  a matrix
$M=
\begin{bmatrix}
a&b\\
c&d
\end{bmatrix}
$
with integer entries satisfying: 
$$\varepsilon\begin{bmatrix}
r_1\\r_2
\end{bmatrix}
=M\begin{bmatrix}
r_1\\
r_2
\end{bmatrix}.$$
We recall Lang's result \cite{LAN}.

\begin{theorem}\label{thm2.1}
By keeping the above notations, we have 
\begin{align*}
\zeta_k(-1, \mathfrak{A})&=\frac{\textsl{sgn }
\delta(\mathfrak{a})~r_2r_2'}{360N(\mathfrak{a})c^3}\big\{(a
+d)^3-6(a+d)N(\varepsilon)-240c^3(\textsl{sgn } c)\\
&\times S^3(a,c)+180ac^3(\textsl{sgn } 
c)S^2(a,c)-240c^3(\textsl{sgn } c)S^3(d,c)\\
& +180dc^3(\textsl{sgn } c)S^2(d,c) \big\},
\end{align*}
where $S^i(-,-)$ denotes the generalized Dedekind sum as defined 
in \cite{AP50}.
\end{theorem}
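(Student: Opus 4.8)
The plan is to realise $\zeta_k(-1,\mathfrak{A})$ as a lattice sum over a Shintani‑type cone attached to the fundamental unit, and then to evaluate that sum at $s=-1$ by two‑variable Euler--Maclaurin summation, a computation in which higher Dedekind sums appear automatically; this is essentially Zagier's method for values of real quadratic zeta functions at negative integers, specialised to the rank‑one unit situation, and it recovers Lang's closed form. First I would write every integral ideal in the class $\mathfrak{A}$ as $(\xi)\mathfrak{a}^{-1}$ with $\xi$ running over $\mathfrak{a}\setminus\{0\}$ modulo the unit group, so that
\[
\zeta_k(s,\mathfrak{A}) = N(\mathfrak{a})^{s}\sum_{\xi\in(\mathfrak{a}\setminus\{0\})/\mathcal{O}_k^{\times}}|N(\xi)|^{-s}.
\]
Using the integral basis, put $\xi = xr_1 + yr_2$ with $(x,y)\in\mathbb{Z}^2$; then $N(\xi)=\xi\xi'$ is the norm form, a product of the two linear forms $L(x,y)=xr_1+yr_2$ and $L'(x,y)=xr_1'+yr_2'$, the number $\delta(\mathfrak{a})=r_1r_2'-r_1'r_2$ is, up to sign, the determinant of the coefficient matrix of $(L,L')$, and multiplication by $\varepsilon$ is exactly the linear automorphism of $\mathbb{Z}^2$ given by $M$, with $\det M=N(\varepsilon)=\pm1$.

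Next, since $M$ is hyperbolic with irrational eigendirections, a fundamental domain for the action of $\langle M\rangle$ on the half‑plane bounded by an eigenline is a single two‑dimensional rational cone $C$, once one has used $\pm1$ to pass from $\mathbb{Z}^2\setminus\{0\}$ to that half‑plane; thus $\zeta_k(s,\mathfrak{A})$ becomes $N(\mathfrak{a})^{s}$ times the Shintani zeta function of the pair $(L,L')$ summed over $C\cap\mathbb{Z}^2$. I would then continue this series meromorphically to $s=-1$ in the usual way (Mellin transform against the product of the two obvious one‑variable generating series, or direct Euler--Maclaurin along the two edges of $C$), whereupon the value collapses to a \emph{finite} sum: the contribution of the lattice points inside a fundamental parallelogram for $M$, written through the Bernoulli polynomials $B_1(\{\,\cdot\,\})$ and $B_2(\{\,\cdot\,\})$ evaluated at the rationals $ja/c$ and $jd/c$ for $0\le j<|c|$. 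Here the hypothesis $c\neq0$ is essential and automatic, since a unit of a real quadratic field cannot fix a basis vector of $\mathfrak{a}$. Recognising these Bernoulli‑polynomial sums as the generalised Dedekind sums $S^2(a,c),S^3(a,c),S^2(d,c),S^3(d,c)$ of \cite{AP50}, the diagonal contribution as $(a+d)^3-6(a+d)N(\varepsilon)$, and tracking the orientation factor $\operatorname{sgn}\delta(\mathfrak{a})$ together with the normalising constant $r_2r_2'/(N(\mathfrak{a})c^3)$ coming from expressing $N(\xi)$ through $L$ and $L'$, one assembles exactly the displayed formula, the constant $1/360$ being forced by $B_4=-1/30$ and the combinatorial factors of the weight‑four two‑variable Euler--Maclaurin expansion.

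The step I expect to be the main obstacle is the sign‑and‑normalisation bookkeeping: one must choose the cone $C$, its orientation, and the residue representatives $0\le j<|c|$ so that the periodicity and the reciprocity law for the Bernoulli polynomials collate the finitely many boundary terms into precisely the Dedekind sums with the stated coefficients $240c^3$ and $180ac^3,180dc^3$, and one must run the two cases $N(\varepsilon)=+1$ and $N(\varepsilon)=-1$ uniformly --- this is why $N(\varepsilon)$, rather than $1$, occurs in the linear term. A secondary technical point is to justify that the Shintani series over $C$ has meromorphic continuation and that its value at the negative integer $s=-1$ agrees with the naive Euler--Maclaurin evaluation, which is routine once $C$ is simplicial. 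Since the statement is quoted verbatim from Lang's work \cite{LAN}, the paper itself gives no proof; the above is how one would reconstruct it.
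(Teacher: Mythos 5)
The paper does not prove this statement at all: Theorem \ref{thm2.1} is quoted verbatim from Lang \cite{LAN} (``We recall Lang's result''), so there is no internal proof to compare yours against, and your reconstruction must be judged on its own terms. The route you choose --- realising $\zeta_k(s,\mathfrak{A})$ as a lattice sum over a Shintani-type cone for $\langle M\rangle$ and evaluating at $s=-1$ by two-variable Euler--Maclaurin summation --- is a legitimate and well-trodden path (essentially Zagier's later method), but it is not Lang's: Lang works instead through Siegel's Eisenstein-series approach and Apostol's transformation law for the Lambert series defining the generalized Dedekind sums, which is why the answer comes out packaged directly in terms of $S^2$ and $S^3$ of the matrix entries $a,d,c$ with no cone decomposition in sight. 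Either road can reach the stated identity, so the choice of route is not in itself an objection.

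The genuine gap is that your proposal stops exactly where the theorem begins. Everything that makes the statement nontrivial --- the coefficients $240c^3$ and $180ac^3$, $180dc^3$, the polynomial $(a+d)^3-6(a+d)N(\varepsilon)$, the prefactor $\operatorname{sgn}\delta(\mathfrak{a})\,r_2r_2'/(360N(\mathfrak{a})c^3)$, and the identification of the boundary Bernoulli sums with Apostol's $S^2$ and $S^3$ from \cite{AP50} rather than with the more familiar Shintani--Zagier sums of the form $\sum_j B_i(j/c)B_{4-i}(\{ja/c\})$ --- is carried by the single clause ``one assembles exactly the displayed formula,'' and you yourself flag the sign-and-normalisation bookkeeping as the main obstacle. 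Two smaller inaccuracies: at $s=-1$ the relevant Bernoulli data has total weight four, so the finite sum pairs $B_1$ with $B_3$ and $B_2$ with $B_2$, not merely ``$B_1(\{\cdot\})$ and $B_2(\{\cdot\})$''; and the reduction from $\mathbb{Z}^2\setminus\{0\}$ modulo the full unit group $\{\pm\varepsilon^k\}$ to a single cone needs extra care precisely when $N(\varepsilon)=-1$ (the case relevant throughout this paper), since $\varepsilon$ then exchanges the sign conditions at the two real places and the fundamental-domain argument must be run for $\varepsilon^2$ with the resulting factor of $2$ tracked. As a plan of attack this is sound; as a proof it is not yet one.
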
 

In order to apply Theorem \eqref{thm2.1} one needs to know $a,b,c,d$ and the generalized Dedekind sums. The following 
result \cite[p. 143, Eq. 2.15]{LAN} helps us to determine $a, b, c$ and $d$.
\begin{lemma}\label{lem2.1} With the same notations as above, we have
$$
M=
\begin{bmatrix}
Tr\left(\frac{r_1 r_2'\varepsilon}{\delta(\mathfrak{a})}\right)& 
Tr\left(\frac{r_1 r_1'\varepsilon'}{\delta(\mathfrak{a})}\right) 
\vspace*{2mm} \\ 
Tr\left(\frac{r_2 r_2'\varepsilon}{\delta(\mathfrak{a})}\right) 
&
Tr\left(\frac{r_1 r_2'\varepsilon'}{\delta(\mathfrak{a})}
\right)
\end{bmatrix}
$$
Moreover, $bc\ne 0$ and $\det(M)=N(\varepsilon)$.
\end{lemma}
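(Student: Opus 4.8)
The plan is to follow Lang's derivation directly. Since $\{r_1,r_2\}$ is an integral basis for the integral ideal $\mathfrak{a}$ and $\varepsilon$ is a unit, $\varepsilon r_1$ and $\varepsilon r_2$ again lie in $\mathfrak{a}$, so there is a unique integer matrix $M=\begin{bmatrix} a&b\\ c&d\end{bmatrix}$ with $\varepsilon\begin{bmatrix} r_1\\ r_2\end{bmatrix}=M\begin{bmatrix} r_1\\ r_2\end{bmatrix}$. The first step is to solve this $2\times 2$ linear system explicitly for $a,b,c,d$ by Cramer's rule. Writing out the two scalar equations $\varepsilon r_1 = a r_1 + b r_2$ and $\varepsilon r_2 = c r_1 + d r_2$, and taking conjugates, one gets a $2\times 2$ system with coefficient matrix $\begin{bmatrix} r_1 & r_2\\ r_1' & r_2'\end{bmatrix}$, whose determinant is exactly $\delta(\mathfrak{a})=r_1 r_2' - r_1' r_2$. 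Solving for $a$ and $b$ from the $r_1$-equation and its conjugate, and for $c,d$ from the $r_2$-equation and its conjugate, yields
$$
a=\frac{\varepsilon r_1 r_2' - \varepsilon' r_1' r_2}{\delta(\mathfrak{a})},\qquad
b=\frac{\varepsilon' r_1 r_1' - \varepsilon r_1 r_1'}{\delta(\mathfrak{a})} = \frac{r_1 r_1'(\varepsilon'-\varepsilon)}{\delta(\mathfrak{a})},
$$
and similarly for $c,d$; the point is that each entry is a sum of an element and its conjugate, hence equal to a trace. Concretely $a = \mathrm{Tr}\!\left(\frac{r_1 r_2'\varepsilon}{\delta(\mathfrak{a})}\right)$ once one checks that the second summand $-\frac{\varepsilon' r_1' r_2}{\delta(\mathfrak{a})}$ is precisely the Galois conjugate of $\frac{\varepsilon r_1 r_2'}{\delta(\mathfrak{a})}$ — which holds because conjugation sends $\delta(\mathfrak{a})$ to $-\delta(\mathfrak{a})$ and swaps $r_i\leftrightarrow r_i'$. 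Doing this bookkeeping for all four entries gives the displayed matrix.

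Next I would verify the two supplementary claims. For $\det(M)=N(\varepsilon)$: applying the relation $\varepsilon\begin{bmatrix} r_1\\ r_2\end{bmatrix}=M\begin{bmatrix} r_1\\ r_2\end{bmatrix}$ and its conjugate $\varepsilon'\begin{bmatrix} r_1'\\ r_2'\end{bmatrix}=M\begin{bmatrix} r_1'\\ r_2'\end{bmatrix}$ simultaneously, we get $M\begin{bmatrix} r_1 & r_1'\\ r_2 & r_2'\end{bmatrix}=\begin{bmatrix} r_1 & r_1'\\ r_2 & r_2'\end{bmatrix}\begin{bmatrix}\varepsilon & 0\\ 0 & \varepsilon'\end{bmatrix}$; taking determinants and cancelling $\det\begin{bmatrix} r_1 & r_1'\\ r_2 & r_2'\end{bmatrix}=-\delta(\mathfrak{a})\neq 0$ gives $\det(M)=\varepsilon\varepsilon'=N(\varepsilon)$. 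For $bc\neq 0$: if $b=0$ then $\varepsilon r_1 = a r_1$, forcing $\varepsilon=a\in\mathbb{Z}$, impossible since $\varepsilon$ is a fundamental unit of a real quadratic field (it is irrational); symmetrically $c=0$ would force $\varepsilon=d\in\mathbb{Z}$. Hence $bc\neq 0$.

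The main obstacle — really the only subtle point — is the sign bookkeeping in identifying each matrix entry with the correct trace: one must track carefully that $\delta(\mathfrak{a})' = -\delta(\mathfrak{a})$ under conjugation, so that ratios like $\frac{r_1 r_2'\varepsilon}{\delta(\mathfrak{a})}$ have the form $x$ while the companion term coming out of Cramer's rule is $x'$, giving $x+x'=\mathrm{Tr}(x)$ rather than $x-x'$. Getting a sign wrong here would swap a trace for a "conjugate-difference" (which is not even rational), so this is where care is needed; everything else is linear algebra. I would also remark that the entries are automatically integers because $M$ has integer entries by construction (each $\varepsilon r_i$ expands in the $\mathbb{Z}$-basis $\{r_1,r_2\}$), which is consistent with each trace being an algebraic integer that is rational, hence in $\mathbb{Z}$.
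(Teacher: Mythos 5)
Your derivation is correct. The paper does not prove this lemma at all --- it is quoted verbatim from Lang \cite[p.~143, Eq.~2.15]{LAN} --- so you have supplied a complete argument where the paper relies on a citation; your route (Cramer's rule on the system $\varepsilon r_i = \cdot\, r_1 + \cdot\, r_2$ together with its conjugate, then recognizing each entry as a trace using $\delta(\mathfrak{a})' = -\delta(\mathfrak{a})$) is the standard one and all four entries check out, as do the arguments for $\det(M)=N(\varepsilon)$ and $bc\neq 0$. One immaterial slip: $\det\begin{bmatrix} r_1 & r_1'\\ r_2 & r_2'\end{bmatrix} = r_1r_2'-r_1'r_2 = +\delta(\mathfrak{a})$, not $-\delta(\mathfrak{a})$; since you only use that this determinant is nonzero, nothing is affected.
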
 
To compute partial zeta values for ideal classes of a real 
quadratic field the following expressions (see, \cite[pp. 
155, Eq. 4.3--4.4]{LAN}) are required.
\begin{lemma}\label{lem2.2} For any positive integer $m$, we have
\begin{itemize}
\item[(i)] $S^3(\pm 1, m)=\pm\frac{-m^4+5m^2-4}{120m^3}.$
\vspace*{2mm}
\item[(ii)] $S^2(\pm 1, m)=\frac{m^4+10m^2-6}{180m^3}.$
\end{itemize}
\end{lemma}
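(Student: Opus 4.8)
The plan is to obtain both identities by a direct computation from the definition of the generalized Dedekind sum $S^p(h,k)$ given in \cite{AP50} (the form recalled in \cite{LAN}), specialised to $h=\pm 1$. Recall that $S^p(h,k)$ is assembled from the periodic Bernoulli functions $\bar B_j$, which on $[0,1)$ coincide with the Bernoulli polynomials: $\bar B_1(x)=x-\tfrac12$, $\bar B_2(x)=x^2-x+\tfrac16$, $\bar B_3(x)=x^3-\tfrac32x^2+\tfrac12x$. In $S^p(\pm1,m)$ the Bernoulli functions are evaluated only at points of the form $\pm\mu/m$ with $1\le\mu\le m-1$, all lying in the fundamental interval $(0,1)$ for $h=1$ (the case $h=-1$ being reduced to it by the reflection used below), so each $\bar B_j$ can be replaced by its explicit polynomial, turning $S^p(\pm1,m)$ into a finite sum $\sum_{\mu=1}^{m-1}P(\mu/m)$ with $P$ an explicit polynomial of degree $p+1$.

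First I would handle $h=1$. Expanding $P(\mu/m)$ and collecting powers of $\mu$, the sum reduces to a fixed rational combination of the power sums $\sum_{\mu=1}^{m-1}\mu^{j}$ for $0\le j\le p+1$; substituting Faulhaber's closed forms for these and simplifying produces a rational function of $m$, which a short check on the leading and constant coefficients identifies with $\frac{-m^4+5m^2-4}{120m^3}$ when $p=3$ and with $\frac{m^4+10m^2-6}{180m^3}$ when $p=2$. A cleaner variant, presumably the one underlying Eq.\ 4.3--4.4 of \cite{LAN}, avoids the brute-force algebra: one uses the distribution relation $\sum_{j=0}^{m-1}\bar B_n(j/m)=m^{1-n}\bar B_n(0)$ together with the reflection relation to evaluate each homogeneous piece of the sum in closed form directly.

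The case $h=-1$ then follows at once from the parity of the periodic Bernoulli functions, $\bar B_p(-x)=(-1)^p\bar B_p(x)$ for $x\notin\mathbb Z$: inserting this in the definition shows $S^p(-1,m)=(-1)^p\,S^p(1,m)$, the remaining factor being sign-insensitive. Hence $S^2(-1,m)=S^2(1,m)$ while $S^3(-1,m)=-S^3(1,m)$, which is exactly the $\pm$ appearing in (i) and (ii).

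The only genuine difficulty is bookkeeping. One must fix, from \cite{AP50,LAN}, the convention for $\bar B_1$ at integer arguments (hence the contribution, if any, of the term $\mu\equiv 0$), whether the outer sum runs over $\mu=1,\dots,m-1$ or over a full residue system modulo $m$, and the precise placement of $h$ inside the two Bernoulli factors; a sign or range slip anywhere propagates into the final polynomial. Once these conventions are pinned down the computation is routine, the power-sum identities doing all the work.
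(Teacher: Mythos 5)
The paper does not actually prove this lemma: the two identities are imported verbatim from Lang \cite[pp.~155, Eq.~4.3--4.4]{LAN}, so there is no in-paper argument to compare against. Your route --- replace each periodic Bernoulli function by the corresponding Bernoulli polynomial on $(0,1)$, reduce to Faulhaber power sums, and dispose of $h=-1$ via $\bar B_p(-x)=(-1)^p\bar B_p(x)$, giving $S^p(-1,m)=(-1)^pS^p(1,m)$ --- is the natural (essentially the only) way to verify them, and the parity step correctly explains why the $\pm$ appears in (i) but not in (ii).

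There is, however, one point where your sketch would actually derail rather than merely require bookkeeping, namely your assertion that the summand is a polynomial of degree $p+1$. That is the signature of Apostol's normalization $s_p(h,k)=\sum_{\mu=1}^{k-1}\tfrac{\mu}{k}\,\bar B_p\!\left(\tfrac{h\mu}{k}\right)$ --- which is, confusingly, the definition the paper itself points to --- and under that normalization item (ii) is \emph{false}: a direct Faulhaber computation gives $s_2(1,m)=\tfrac{1-m}{12m}$, not $\tfrac{m^4+10m^2-6}{180m^3}$ (already at $m=1$ the stated value is $\tfrac1{36}\neq 0$). The formulas in the lemma belong to Lang's sums
\begin{equation*}
S^{j}(a,c)=\sum_{\mu \bmod c}\bar B_{j}\!\left(\frac{a\mu}{c}\right)\bar B_{4-j}\!\left(\frac{\mu}{c}\right),
\end{equation*}
whose summand has degree $4$ in both cases; with this definition one checks, e.g., $S^2(1,2)=\bar B_2(0)^2+\bar B_2(1/2)^2=\tfrac1{36}+\tfrac1{144}=\tfrac5{144}$ and $S^3(1,3)=-\tfrac1{81}$, in agreement with the lemma. (For $j=3$ the two conventions happen to coincide, since the constant $-\tfrac12$ in $\bar B_1$ contributes $-\tfrac12\sum_\mu\bar B_3(\mu/c)=0$; that is why your degree-$(p+1)$ computation reproduces (i) but not (ii).) You did flag the convention as the crux, so the fix is just to commit to Lang's two-factor definition before running the power-sum computation; with that in place the rest of your argument goes through.
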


\section{\textbf{The field $\mathbf{\mathbb{Q}(\sqrt{n^2+1})}$}}

In this case, fundamental unit is $\varepsilon= n+\sqrt{n^2+1} \text { and } N(\varepsilon)=-1$. We also know that if $p|n$, where $p$ is an odd prime, then $p$ splits in 
$k_n$ as
\begin{equation}\label{eqn3.1}
(p)=\begin{cases}
\left(p,\frac{1+\sqrt{d}}{2}\right)\left(p,\frac{1-\sqrt{d}}
{2}\right)  &{\rm ~if~} n^2+1 \equiv1 \pmod 4,\\
(p,1+\sqrt{d})(p,1-\sqrt{d})  &{\rm ~if~ } n^2+1 \equiv2 \pmod 4.
\end{cases}
\end{equation}

By \cite[Theorem 2.3]{BK1}, we have  

\begin{equation}\label{eqn3.2}
\zeta_{k_n}(-1, \mathcal{P})=\begin{cases}
\frac{n^3+14n}{360}  &{\rm ~if~} n^2+1 \equiv1 \pmod 4,\\
\frac{4n^3+11n}{180}  &{\rm ~if~} n^2+1 \equiv2 \pmod 4.
\end{cases}
\end{equation}

We first find the integral basis of some particular ideals. Using this and Theorem \ref{thm2.1}, we calculate the partial Dedekind zeta values for an ideal 
classes of $k_n$. Then we compare 
these values with Dedekind zeta value and use some elementary 
group theoretic arguments to establish our results.

\begin{lemma}\label{lem3.1}
Let $n^2+1 \equiv 1 \pmod 4$, $p^t \mid\mid n$ be a prime, $\mathfrak{a}=\left(p,\frac{1+\sqrt{d}}{2}\right)$ and $\mathfrak{a}^{-1}=\left(p,\frac{1-\sqrt{d}}{2}\right)$. Then $\{p^r,\frac{1+\sqrt{d}}{2}\}$ and $\{p^r,\frac{1-\sqrt{d}}{2}\}$ is an 
integral basis for $\mathfrak{a}^r$ and $\mathfrak{a}^{-r}$, respectively, $ \forall \hspace*{2mm} 1 \leq r \leq t$. 
\end{lemma}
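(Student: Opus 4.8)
The plan is to prove this by induction on $r$, the engine being the identity $\omega^2 = \omega + (n/2)^2$ for $\omega := \frac{1+\sqrt d}{2}$ together with the strong $p$-divisibility of $(n/2)^2$. First I would record the setup: since $d=n^2+1\equiv 1\pmod 4$ forces $n$ to be even, $\omega$ is an algebraic integer, $\mathcal{O}_{k_n}=\mathbb{Z}\oplus\mathbb{Z}\omega$, the minimal polynomial of $\omega$ is $x^2-x+\frac{1-d}{4}$ with $\frac{1-d}{4}=-(n/2)^2$, so $\omega^2=\omega+(n/2)^2$ and $N(\omega)=\omega\omega'=-(n/2)^2$. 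Because $p$ is odd and $p^t\mid\mid n$, we get $p^t\mid\mid (n/2)$, hence $p^{2t}\mid (n/2)^2$; in particular $p^j\mid (n/2)^2$ for every $j\le t$.

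For the base case $r=1$, the lattice $\mathbb{Z}p+\mathbb{Z}\omega$ is contained in $\mathfrak{a}=(p,\omega)$, is free of rank $2$, and has index $p=N(\mathfrak{a})=[\mathcal{O}_{k_n}:\mathfrak{a}]$ in $\mathcal{O}_{k_n}$ (using \eqref{eqn3.1}), so it equals $\mathfrak{a}$. For the inductive step, assume $\mathfrak{a}^r=\mathbb{Z}p^r+\mathbb{Z}\omega$ with $1\le r\le t-1$. Then
\[
\mathfrak{a}^{r+1}=\mathfrak{a}^r\mathfrak{a}=(\mathbb{Z}p^r+\mathbb{Z}\omega)(\mathbb{Z}p+\mathbb{Z}\omega)=\mathbb{Z}p^{r+1}+\mathbb{Z}p^r\omega+\mathbb{Z}p\omega+\mathbb{Z}\omega^2.
\]
Since $r\ge 1$, the terms $p^r\omega$ and $p\omega$ lie in $\mathbb{Z}\omega$, while $\omega^2=\omega+(n/2)^2\in\mathbb{Z}\omega+\mathbb{Z}p^{r+1}$ because $p^{r+1}\mid (n/2)^2$ (here $r+1\le t$). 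Hence $\mathfrak{a}^{r+1}\subseteq\mathbb{Z}p^{r+1}+\mathbb{Z}\omega$; the latter is free of rank $2$ with index $p^{r+1}=N(\mathfrak{a}^{r+1})=[\mathcal{O}_{k_n}:\mathfrak{a}^{r+1}]$ in $\mathcal{O}_{k_n}$, so the containment is an equality and $\{p^{r+1},\omega\}$ is an integral basis of $\mathfrak{a}^{r+1}$. This closes the induction; the statement for $\mathfrak{a}^{-r}=\bar{\mathfrak{a}}^{\,r}$ follows by applying the nontrivial automorphism of $k_n$, which interchanges $\mathfrak{a}\leftrightarrow\mathfrak{a}^{-1}$ and $\omega\leftrightarrow\frac{1-\sqrt d}{2}$.

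The only real subtlety — the main obstacle — is the bookkeeping ensuring $\omega^2$ contributes nothing outside $\mathbb{Z}p^{r+1}+\mathbb{Z}\omega$, which rests entirely on $v_p((n/2)^2)=2t\ge r+1$; this in turn uses that $p$ is odd, so that passing from $n$ to $n/2$ does not change the $p$-adic valuation. Everything else is a routine index/norm comparison. An even shorter alternative I could take instead of the induction is to factor the principal ideal $(\omega)$: since $\mathfrak{a}\mid(\omega)$, $\bar{\mathfrak{a}}\nmid(\omega)$ (as $\omega\equiv 1\pmod{\bar{\mathfrak{a}}}$), and $v_p(N(\omega))=2t$, one gets $(\omega)=\mathfrak{a}^{2t}\mathfrak{b}$ with $\gcd(N(\mathfrak{b}),p)=1$, whence $\omega\in\mathfrak{a}^{2t}\subseteq\mathfrak{a}^r$ for all $r\le t$; combined with $p^r\in\mathfrak{a}^r$ and the same index count, this yields $\mathfrak{a}^r=\mathbb{Z}p^r+\mathbb{Z}\omega$ directly.
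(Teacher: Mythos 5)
Your proof is correct, and its skeleton --- exhibit the rank-$2$ module $\mathbb{Z}p^r+\mathbb{Z}\omega$ with $\omega=\frac{1+\sqrt d}{2}$, then force equality with $\mathfrak{a}^r$ by comparing the index $p^r$ with $N(\mathfrak{a}^r)=p^r$ --- is the same as the paper's. The difference is in how the key containment is obtained: the paper simply defines $M_r=[p^r,\omega]$, invokes a criterion from Lemmermeyer's notes to assert that $M_r$ is an ideal of norm $p^r$, and then notes $\mathfrak{a}^r\subseteq M_r$; you instead prove the containment $\mathfrak{a}^{r+1}\subseteq \mathbb{Z}p^{r+1}+\mathbb{Z}\omega$ by hand, multiplying out the $\mathbb{Z}$-module generators and using the relation $\omega^2=\omega+(n/2)^2$ together with $p^{r+1}\mid (n/2)^2$. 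This is more self-contained and has the virtue of making explicit exactly where the hypothesis $p^t\mid\mid n$ enters (namely $v_p\bigl((n/2)^2\bigr)=2t\geq r$), which the paper leaves buried in the citation; the paper's version is shorter but opaque on that point. Your closing alternative --- factoring the principal ideal $(\omega)$ as $\mathfrak{a}^{2t}\mathfrak{b}$ with $p\nmid N(\mathfrak{b})$ to get $\omega\in\mathfrak{a}^r$ directly --- is also valid and arguably the cleanest route, giving the reverse containment $\mathbb{Z}p^r+\mathbb{Z}\omega\subseteq\mathfrak{a}^r$ with the same index count. The treatment of $\mathfrak{a}^{-r}$ by applying the nontrivial automorphism matches the paper's ``similarly'' and is fine, granted the paper's convention that $\mathfrak{a}^{-1}$ denotes the conjugate integral ideal.
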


\begin{proof}
Consider $$M_{r}=\Big[p^r,
\frac{1+\sqrt{d}}{2}\Big],$$ a nonzero $
\mathbb{Z}$-module in $\mathcal{O}_{k_n}$.
Then, by \cite[Proposition 2.6 and 2.11]{Fra}, $M_{r}$ is an ideal and $N(M_{r})=p^r$, $\forall \hspace{2mm} 1 \leq r \leq t$. As $N(\mathfrak{a}^r)=p^r$ and $ \mathfrak{a}^r \subseteq M_{r}$, $\forall \hspace{2mm} 1 \leq r \leq t$, therefore $M_{r} = 
\mathfrak{a}^r$. Hence $\{p^r,\frac{1+\sqrt{d}}{2}\}$  is an 
integral basis for $\mathfrak{a}^r$, $\forall \hspace{2mm} 1 \leq r \leq t$. Similarly, if $M_{r}'=
\Big[p^r,\frac{1-\sqrt{d}}{2}\Big]$ then $M_{r}' = \mathfrak{a}^{-r}
$ and $\{p^r,\frac{1-\sqrt{d}}{2}\}$  is an integral basis 
for $\mathfrak{a}^{-r}$, $\forall \hspace{2mm} 1 \leq r \leq t$.
\end{proof}

Now if we consider non-zero $\mathbb{Z}$-modules $N_r=[p^r,1+\sqrt{d}]$ and $N_{r}'=[p^r,1-\sqrt{d}]$ in $\mathcal{O}_{k_n}$, where $n^2+1 \equiv 2 \pmod 4$, then as before one can prove the following;

\begin{lemma}\label{lem3.2}
Let $n^2+1 \equiv 2 \pmod 4$, $p^t \mid\mid n$ be a prime, $\mathfrak{a}=(p,1+\sqrt{d})$ and $\mathfrak{a}^{-1}=(p,1-\sqrt{d})$. Then $\{p^r,1+\sqrt{d}\}$  and $\{p^r,1-\sqrt{d}\}$ is an 
integral basis for $\mathfrak{a}^r$ and $\mathfrak{a}^{-r}$, respectively, $ \forall \hspace*{2mm} 1 \leq r \leq t$. 
\end{lemma}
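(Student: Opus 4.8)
The plan is to mimic, essentially verbatim, the argument already carried out in Lemma \ref{lem3.1}, replacing the ideal $\left(p,\frac{1+\sqrt d}{2}\right)$ by $(p,1+\sqrt d)$ and its inverse. The key point that made the previous proof work was threefold: first, the factorization \eqref{eqn3.1} tells us that in the case $n^2+1\equiv 2\pmod 4$ the odd prime $p\mid n$ splits as $(p)=(p,1+\sqrt d)(p,1-\sqrt d)$, so $\mathfrak a=(p,1+\sqrt d)$ and $\mathfrak a^{-1}=(p,1-\sqrt d)$ are genuine prime ideals of norm $p$; second, for each $r$ with $1\le r\le t$ the $\mathbb Z$-module $N_r=[p^r,1+\sqrt d]$ is actually an ideal of $\mathcal O_{k_n}$ of norm $p^r$, by \cite[Proposition 2.6 and 2.11]{Fra}; third, the containment $\mathfrak a^r\subseteq N_r$ together with the equality of norms $N(\mathfrak a^r)=p^r=N(N_r)$ forces $\mathfrak a^r=N_r$, and hence $\{p^r,1+\sqrt d\}$ is an integral basis for $\mathfrak a^r$. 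The same three observations apply to $N_r'=[p^r,1-\sqrt d]$ and $\mathfrak a^{-r}$.

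Concretely, first I would verify that $N_r$ and $N_r'$ are ideals: one needs $\sqrt d\cdot(1+\sqrt d)=d+\sqrt d\in N_r$, i.e. $d+\sqrt d = (1+\sqrt d) + (d-1)$, and since $d-1=n^2$ is divisible by $p^t$, hence by $p^r$ for $r\le t$, we get $d-1\in p^r\mathbb Z\subseteq N_r$; this is exactly where the hypothesis $p^t\mid\mid n$ (used only as $p^t\mid n$) enters. One also checks $1\cdot(1+\sqrt d)=1+\sqrt d\in N_r$ trivially and that $p^r\cdot\sqrt d\in N_r$ is not needed separately — it is cleaner to invoke the cited propositions of \cite{Fra} which give the ideal property and the norm $N(N_r)=p^r$ directly once the module is closed under multiplication by $\mathcal O_{k_n}$. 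Next I would note $\mathfrak a^r\subseteq N_r$: since $\mathfrak a=(p,1+\sqrt d)\subseteq [p,1+\sqrt d]=N_1$ and the $N_r$ form a decreasing-norm chain compatible with multiplication, $\mathfrak a^r\subseteq N_r$; alternatively, $\mathfrak a^r$ is generated by products of $r$ elements each lying in $N_1$, and $p\in N_1$ divides appropriately, giving membership in $N_r$. Then $N(\mathfrak a^r)=N(\mathfrak a)^r=p^r=N(N_r)$ forces the two ideals to coincide. The argument for $\mathfrak a^{-r}=N_r'$ is identical, using the conjugate element $1-\sqrt d$ and the fact that $\mathfrak a^{-1}=(p,1-\sqrt d)$ is the conjugate ideal.

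I do not expect any genuine obstacle here: the lemma is, as the text signals (\emph{"as before one can prove"}), a routine transcription of Lemma \ref{lem3.1} to the case $d\equiv 2\pmod 4$. The only mild care needed is in the inclusion $\mathfrak a^r\subseteq N_r$ — one should make sure that the module $N_r=[p^r,1+\sqrt d]$ really contains all $r$-fold products of generators of $\mathfrak a$, which again comes down to $p^t\mid n$ so that $d\equiv 1\pmod{p^r}$ for $r\le t$, making $(1+\sqrt d)(1-\sqrt d)=1-d=-n^2\equiv 0\pmod{p^r}$ and $(1\pm\sqrt d)^2 = 1+d\pm 2\sqrt d$ reducible modulo $p^r$ as well. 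So the write-up is short: state the two $\mathbb Z$-modules $N_r,N_r'$, cite \cite{Fra} for the ideal property and the norm, observe $\mathfrak a^{\pm r}\subseteq N_r$ resp. $N_r'$ using $p^r\mid n^2=d-1$, and conclude equality by comparing norms, exactly as in the proof of Lemma \ref{lem3.1}.
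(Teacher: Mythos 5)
Your proposal is correct and follows exactly the route the paper intends: the paper gives no separate proof of Lemma \ref{lem3.2}, stating only that ``as before one can prove'' it via the modules $N_r=[p^r,1+\sqrt d]$ and $N_r'=[p^r,1-\sqrt d]$, i.e.\ the same argument as Lemma \ref{lem3.1} (cite \cite{Fra} for the ideal property and norm $p^r$, note $\mathfrak a^{\pm r}\subseteq N_r$ resp.\ $N_r'$, and conclude equality by comparing norms). Your additional verification that $p^r\mid d-1=n^2$ makes the modules closed under multiplication by $\mathcal O_{k_n}$ is a harmless elaboration of what the cited propositions are doing.
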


 We will derive our results in three subsections based on the congruence 
relation, i.e. $n^2+1 \equiv 1,2,5 \pmod 8$.

\subsection{$\mathbf{n^2+1\equiv 5 \pmod 8}$} 

$n^2 \equiv 4 \pmod 8 \Rightarrow n=2n_0$, where 
$n_0$ is an odd integer.


\begin{theorem}\label{thm3.1}
Let $p$ be an odd prime and $n=2p^t$ with $t \geq 1$ an integer. 
Then $h(d) \geq t$ and equality holds if and only if  $$
\zeta_{k_n}(-1)=\frac{n^3+14n}{360}+\sum_{r=1}^{t-1} 
\frac{n^3+n(4p^{4r}+10p^{2r})}{360p^{2r}}.$$
\end{theorem}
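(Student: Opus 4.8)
The plan is to use Lang's formula (Theorem \ref{thm2.1}) to compute the partial Dedekind zeta value $\zeta_{k_n}(-1,\mathfrak{A})$ for each of the ideal classes $[\mathfrak{a}^r]$, $1\le r\le t-1$, where $\mathfrak{a}=\left(p,\frac{1+\sqrt d}{2}\right)$ is the prime above $p$, and then to sum these together with $\zeta_{k_n}(-1,\mathcal P)$ and compare against the full value $\zeta_{k_n}(-1)$. First I would fix $r$ with $1\le r\le t-1$ and take the integral ideal $\mathfrak{a}^r\in[\mathfrak{a}^r]^{-1}=[\mathfrak{a}^{-r}]$ wait — more precisely I take a representative integral ideal in the inverse class; by Lemma \ref{lem3.1} an integral basis is $\{r_1,r_2\}=\{p^r,\frac{1+\sqrt d}{2}\}$. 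I would compute $\delta(\mathfrak{a}^r)=r_1r_2'-r_1'r_2 = -p^r\sqrt d$, so $\operatorname{sgn}\delta<0$, and $N(\mathfrak{a}^r)=p^r$. Then using Lemma \ref{lem2.1} I compute the matrix $M$ explicitly from the traces $\mathrm{Tr}\!\left(\frac{r_ir_j'\varepsilon^{(\cdot)}}{\delta}\right)$; since $\varepsilon=n+\sqrt d=2p^t+\sqrt d$ and $N(\varepsilon)=-1$, the entries $a,b,c,d$ come out as explicit polynomials in $n$ and $p^r$ (in particular $c$ will be a nonzero multiple of $p^r$, and $a+d=\mathrm{Tr}(M)$).

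Next I would substitute these values into Lang's formula. The Dedekind sums $S^2(\pm1,m)$, $S^3(\pm1,m)$ are needed only at arguments $\pm1$ because $a\equiv\pm1$, $d\equiv\pm1$ modulo $c$ (this is the key structural point that makes the computation tractable — one should check that $\gcd(a,c)=\gcd(d,c)=1$ and that $a,d$ reduce to $\pm1$ mod $c$), so Lemma \ref{lem2.2} applies directly. Carrying out the algebra, the claim is that the bracketed expression collapses to give
$$\zeta_{k_n}(-1,[\mathfrak{a}^r])=\frac{n^3+n(4p^{4r}+10p^{2r})}{360\,p^{2r}},$$
and the same value for $[\mathfrak{a}^{-r}]$ by the symmetric computation with $\{p^r,\frac{1-\sqrt d}{2}\}$ (or by noting $\zeta_{k_n}(-1,\mathfrak A)=\zeta_{k_n}(-1,\mathfrak A^{-1})$ in this setting). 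I expect this simplification to be the main obstacle: the generalized Dedekind sum terms and the $(a+d)^3$ term are individually messy, and one must see the cancellations that leave a clean polynomial; keeping careful track of signs ($\operatorname{sgn}c$, $\operatorname{sgn}\delta$, $N(\varepsilon)=-1$) is where errors creep in.

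The final step is the group-theoretic comparison. The ideal classes $\mathcal P=[\mathfrak{a}^0], [\mathfrak a], [\mathfrak a^2],\dots,[\mathfrak a^{t-1}]$ are pairwise distinct: indeed $\mathfrak a$ has order divisible by\ldots — more carefully, since $p^r\nmid$ the relevant quantities for $r<t$ one shows $\mathfrak a^r$ is non-principal for $1\le r\le t-1$ and these classes are distinct, so the subgroup they generate (hence $h(d)$) has order $\ge t$. Now each $\zeta_{k_n}(-1,\mathfrak A)$ is positive (this needs a quick check that each computed value is $>0$, which is clear from the formula), and $\zeta_{k_n}(-1)=\sum_{\mathfrak A\in\mathfrak C(k_n)}\zeta_{k_n}(-1,\mathfrak A)$. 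Therefore
$$\zeta_{k_n}(-1)\ \ge\ \zeta_{k_n}(-1,\mathcal P)+\sum_{r=1}^{t-1}\zeta_{k_n}(-1,[\mathfrak a^r])=\frac{n^3+14n}{360}+\sum_{r=1}^{t-1}\frac{n^3+n(4p^{4r}+10p^{2r})}{360p^{2r}},$$
with equality if and only if these are the \emph{only} ideal classes, i.e. $h(d)=t$. This last equivalence gives the "if and only if" in the statement. I would close by remarking that the distinctness/non-principality of the $[\mathfrak a^r]$ can be read off from the norm argument in Lemma \ref{lem3.1} (a principal ideal of norm $p^r$ with $r<t$ would force an element of that norm, contradicting $p^t\|n$ together with $N(\varepsilon)=-1$).
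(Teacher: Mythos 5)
Your proposal matches the paper's proof in all essentials: the same use of Lemma \ref{lem3.1} for the integral basis of $\mathfrak{a}^{\pm r}$, the same computation of $\zeta_{k_n}(-1,\mathcal{A}^{r})=\frac{n^3+n(4p^{4r}+10p^{2r})}{360p^{2r}}$ via Lemma \ref{lem2.1}, Lemma \ref{lem2.2} and Theorem \ref{thm2.1}, and the same comparison of $\zeta_{k_n}(-1)$ with the sum of partial zeta values over $\mathcal{P},\mathcal{A},\dots,\mathcal{A}^{t-1}$ to get both the bound and the equality criterion. The only cosmetic difference is that the paper deduces non-principality of $\mathcal{A}^r$ ($1\le r<t$) by noting that $\zeta_{k_n}(-1,\mathcal{A}^r)=\zeta_{k_n}(-1,\mathcal{P})$ would force $n=2p^r$, rather than your suggested norm-form argument, but this does not change the structure of the proof.
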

\begin{proof}
By \eqref{eqn3.1} $p$ splits in $k_n$, so let $\mathcal{A}$ be an ideal class containing $\mathfrak{a}=
\left(p,\frac{1+\sqrt{d}}{2}\right)$. Then  $\mathfrak{a}^{-1}=
\left(p,\frac{1-\sqrt{d}}{2}\right) \in \mathcal{A}^{-1}$ and, by Lemma \ref{lem3.1}, $\{p^r,\frac{1+\sqrt{d}}{2}\}$ and $\{p^r,\frac{1-\sqrt{d}}{2}\}$ is an 
integral basis for $\mathfrak{a}^r$ and $\mathfrak{a}^{-r}$, $\forall \hspace*{2mm} 1 \leq r \leq t$.
Now by using Lemma \ref{lem2.1}, Lemma \ref{lem2.2} and Theorem 
\ref{thm2.1}, we get
$$\zeta_{k_n}(-1, \mathcal{A}^{r})=\frac{n^3+n(4p^{4r}
+10p^{2r})}{360p^{2r}}=\zeta_{k_n}(-1, \mathcal{A}^{-r}), \hspace{10mm} \forall \hspace{2mm} 1\leq 
r \leq t.$$
 If for any $1\leq r \leq t$, $\mathcal{A}^r=\mathcal{P}$ then $
\zeta_{k_n}(-1, \mathcal{A}^r)=\zeta_{k_n}(-1, \mathcal{P})$ 
which gives $n=2p^r$. Therefore, $
\mathcal{A}^r$ is non-principal ideal class $\forall \hspace*{2mm} 1 \leq r < t$ and this implies $|
\mathcal{A}| \geq t.$ Hence $h(d) \geq t$ and
$$\zeta_{k_n}(-1) \geq \zeta_{k_n}(-1, \mathcal{P})+\sum_{r=1}
^{t-1}\zeta_{k_n}(-1, \mathcal{A}^r).$$
Equality holds if and only if $h(d)=t.$
Thus $h(d)=t$ if and only if
$$\zeta_{k_n}(-1)=\frac{n^3+14n}{360}+\sum_{r=1}^{t-1} 
\frac{n^3+n(4p^{4r}+10p^{2r})}{360p^{2r}}.$$

\end{proof}

\begin{theorem}\label{thm3.2}
Let $n=2{p_{1}}^{a_{1}}{p_{2}}^{a_{2}}\cdots{p_{m}}^{a_{m}}$ 
with $p_{i}$'s are distinct odd primes and $a_i$'s are postive integers.
\begin{itemize}
\item[(i)] If $m>2$, then $h(d) \geq 2(a_{1}+a_{2}+\cdots+ a_{m})-m+1$ 
and equality holds if and only if 
\begin{eqnarray*}
\zeta_{k_n}(-1)
&=&\frac{n^3+14n}{360}+\sum_{\substack{1 \leq i \leq m\\ 1 \leq r_{i} \leq a_{i}-1}} \frac{n^3+n(4{p_{i}}^{4r_{i}}+10{p_{i}}^{2r_{i}})}{180{p_{i}}^{2r_{i}}}\\
& + & \sum_{i=1}^{m} \frac{n^3+n(4{p_{i}}^{4a_{i}}+10{p_{i}}^{2a_{i}})}{360{p_{i}}^{2a_{i}}}.
\end{eqnarray*}
\item[(ii)] If $m=2$, then $h(d) \geq 2(a_{1}+a_{2})-2$ and 
equality holds if and only if 
\begin{eqnarray*}
\zeta_{k_n}(-1)&=&\frac{n^3+14n}{360}+\sum_{\substack{i=1,2 \\ 1 
\leq r_{i} \leq a_{i}-1}} \frac{n^3+n(4{p_{i}}^{4r_{i}}
+10{p_{i}}^{2r_{i}})}{180{p_{i}}^{2r_{i}}}\\
&+& \frac{n^3+n(4{p_{1}}^{4a_{1}}+10{p_{1}}^{2a_{1}})}{360{p_{1}}^{2a_{1}}}.
\end{eqnarray*}

\end{itemize}
\end{theorem}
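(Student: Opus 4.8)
The plan is to run, simultaneously for $p_1,\dots,p_m$, the argument used for Theorem~\ref{thm3.1}, and to supplement it with one principal relation among the prime‑ideal classes which controls the powers $\mathcal{A}_i^{r}$ with $r$ close to $a_i$.

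Since $p_i\mid n$, by \eqref{eqn3.1} each $p_i$ splits as $(p_i)=\mathfrak{p}_i\overline{\mathfrak{p}}_i$ with $\mathfrak{p}_i=\bigl(p_i,\tfrac{1+\sqrt d}{2}\bigr)$; let $\mathcal{A}_i$ be the class of $\mathfrak{p}_i$, so $\overline{\mathcal{A}}_i=\mathcal{A}_i^{-1}$. First I would record, exactly as in the proof of Theorem~\ref{thm3.1} but with $p_i$ in place of $p$ (using Lemma~\ref{lem3.1} for the integral bases of $\mathfrak{p}_i^{r}$, then Lemma~\ref{lem2.1}, Lemma~\ref{lem2.2} and Theorem~\ref{thm2.1}), the partial zeta values
\[
\zeta_{k_n}(-1,\mathcal{A}_i^{r})=\frac{n^3+n\bigl(4p_i^{4r}+10p_i^{2r}\bigr)}{360\,p_i^{2r}}=\zeta_{k_n}(-1,\mathcal{A}_i^{-r}),\qquad 1\le r\le a_i,
\]
together with $\zeta_{k_n}(-1,\mathcal{P})=\tfrac{n^3+14n}{360}$ from \eqref{eqn3.2}. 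Clearing denominators shows that $\zeta_{k_n}(-1,\mathcal{A}_i^{r})=\zeta_{k_n}(-1,\mathcal{A}_j^{s})$ is equivalent to $p_i^{2r}=p_j^{2s}$ or $p_i^{2r}p_j^{2s}=\prod_k p_k^{2a_k}$, and that $\zeta_{k_n}(-1,\mathcal{A}_i^{r})=\zeta_{k_n}(-1,\mathcal{P})$ is equivalent to $p_i^{2r}=1$ or $p_i^{2r}=\prod_k p_k^{2a_k}$. Comparing prime factorizations, for $m\ge2$ the only coincidence among the values $\zeta_{k_n}(-1,\mathcal{A}_i^{r})$ ($1\le i\le m$, $1\le r\le a_i$) and $\zeta_{k_n}(-1,\mathcal{P})$ is $\zeta_{k_n}(-1,\mathcal{A}_1^{a_1})=\zeta_{k_n}(-1,\mathcal{A}_2^{a_2})$, occurring precisely when $m=2$; and since conjugate classes have equal partial zeta value, any two of the classes $\mathcal{P}$, $\mathcal{A}_i^{\pm r}$ whose zeta values differ are themselves distinct.

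Next I would prove the relation $\prod_{i=1}^{m}\mathcal{A}_i^{a_i}=\mathcal{P}$. The element $\tfrac{n+1+\sqrt d}{2}$ lies in $\mathcal{O}_{k_n}$ (here $n$ is even and $d\equiv1\pmod4$) and has norm $\tfrac n2=\prod_i p_i^{a_i}$; writing it as $\tfrac n2+\tfrac{1+\sqrt d}{2}$ with $\tfrac n2\in p_i^{a_i}\mathbb{Z}$, Lemma~\ref{lem3.1} puts it in every $\mathfrak{p}_i^{a_i}$, hence in $\prod_i\mathfrak{p}_i^{a_i}$, and equality of norms then gives $\bigl(\tfrac{n+1+\sqrt d}{2}\bigr)=\prod_i\mathfrak{p}_i^{a_i}$, i.e. $\prod_i\mathcal{A}_i^{a_i}=\mathcal{P}$. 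Using this I would show, for each $i$, that $\mathcal{A}_i^{r}\neq\mathcal{A}_i^{-r}$ for $1\le r\le a_i-1$, i.e. $\mathcal{A}_i^{2r}\neq\mathcal{P}$: when $2r\le a_i$ this is the zeta‑value computation applied to the exponent $2r$; when $a_i<2r\le 2a_i-2$, the assumption $\mathcal{A}_i^{2r}=\mathcal{P}$, combined with $\prod_{k\ne i}\mathcal{A}_k^{a_k}=\mathcal{A}_i^{-a_i}$, forces $\mathcal{A}_i^{2r-a_i}=\mathcal{A}_i^{-a_i}$, hence $\zeta_{k_n}(-1,\mathcal{A}_i^{2r-a_i})=\zeta_{k_n}(-1,\mathcal{A}_i^{a_i})$ with $1\le 2r-a_i\le a_i-2$, contradicting the distinctness of the zeta values within the index $i$. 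The same comparison gives $\mathcal{A}_i^{a_i}\neq\mathcal{P}$, and $\mathcal{A}_i^{a_i}\neq\mathcal{A}_j^{a_j}$ for $i\ne j$ when $m>2$.

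Finally I would assemble the count. For $m>2$ the classes $\mathcal{P}$, the $\mathcal{A}_i^{\pm r}$ ($1\le i\le m$, $1\le r\le a_i-1$), and the $\mathcal{A}_i^{a_i}$ ($1\le i\le m$) are pairwise distinct, giving $h(d)\ge 1+\sum_i 2(a_i-1)+m=2\bigl(\sum_i a_i\bigr)-m+1$; and since $\zeta_{k_n}(-1)=\sum_{\mathfrak{A}}\zeta_{k_n}(-1,\mathfrak{A})$ over all ideal classes with the partial zeta values at $-1$ positive (as used in the proof of Theorem~\ref{thm3.1}), $\zeta_{k_n}(-1)$ is at least the sum over the listed classes, which---after combining $\mathcal{A}_i^{r}$ with $\mathcal{A}_i^{-r}$, replacing $360$ by $180$---is the expression in (i), with equality exactly when the listed classes exhaust the class group, i.e. when $h(d)=2(\sum_i a_i)-m+1$. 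For $m=2$ one argues identically but omits $\mathcal{A}_2^{a_2}$ from the list (by the relation $\mathcal{A}_2^{a_2}=\mathcal{A}_1^{-a_1}$, which need not differ from $\mathcal{A}_1^{a_1}$) while keeping $\mathcal{A}_1^{a_1}$, obtaining $h(d)\ge 2(a_1+a_2)-2$ with the analogous equality criterion, which is (ii). I expect the only real obstacle to be the range $a_i<2r\le 2a_i-2$ in the claim $\mathcal{A}_i^{r}\neq\mathcal{A}_i^{-r}$, where $2r$ overshoots the range covered by Lemma~\ref{lem3.1} and one must invoke $\prod_k\mathcal{A}_k^{a_k}=\mathcal{P}$ to fold the exponent back down; a subsidiary delicate point is keeping track of exactly which partial zeta values coincide---governed by $p_i^{2r}p_j^{2s}=\prod_k p_k^{2a_k}$---since this is what separates cases (i) and (ii) and produces the $180$'s and $360$'s.
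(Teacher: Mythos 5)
Your proposal is correct and follows essentially the same route as the paper's proof: compute the partial zeta values $\zeta_{k_n}(-1,\mathcal{A}_i^{\pm r})$ via Lang's formula and separate the classes by the coincidence dichotomy ($p_i^{2r}=p_j^{2s}$ or $p_i^{2r}p_j^{2s}=\prod_k p_k^{2a_k}$, the latter being exactly what distinguishes cases (i) and (ii)), then count and convert the equality of class numbers into the stated zeta identity. Your one genuine addition, the principal relation $\prod_i\mathfrak{p}_i^{a_i}=\bigl(\tfrac{n+1+\sqrt d}{2}\bigr)$, is correct and is a cleaner explanation than the paper's of why only one of $\mathcal{A}_1^{\pm a_1},\mathcal{A}_2^{\pm a_2}$ can be guaranteed when $m=2$, but it is not actually needed for the folding step $a_i<2r\le 2a_i-2$, since $\mathcal{A}_i^{2r}=\mathcal{P}$ already yields $\mathcal{A}_i^{2r-a_i}=\mathcal{A}_i^{-a_i}$ (the paper handles this directly as the case $\mathcal{A}_i^{r_i}=\mathcal{A}_i^{-s_i}$ with $r_i\neq s_i$).
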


\begin{proof}
We will prove part (i) and the other part can be proved on the same line.\\
\hspace*{5mm} Since each $p_{i}$ splits in $k_n$, as in \eqref{eqn3.1}, 
so let $\mathcal{A}_i$ be the ideal class containing $\mathfrak{a}
_i=\left(p_{i},\frac{1+\sqrt{d}}{2}\right)$.
Then again by using Lemma \ref{lem2.1}, Lemma \ref{lem2.2} and Theorem \ref{thm2.1}, 
$$\zeta_{k_n}(-1, \mathcal{A}_i^{r_{i}})=\frac{n^3+n(4p_i^{4r_i}
+10p_i^{2r_i})}{360p_i^{2r_i}}=\zeta_{k_n}(-1, \mathcal{A}_i^{-r_{i}}),$$ $\forall 
\hspace{3mm} 1\leq i \leq m\hspace{2mm} \mbox{and} \hspace{2mm} 
1 \leq r_i \leq a_i$.
Compairing the values of $\zeta_{k_n}(-1, \mathcal{A}_i^{r_{i}})
$ and $\zeta_{k_n}(-1, \mathcal{P})$, we get that $\mathcal{A}
_i^{r_i}$ are distinct non-principal ideal classes, $\forall \hspace*{2mm} 1\leq 
i \leq m$ and $1\leq r_i \leq a_i.$
And if for any  $1\leq i \leq m$, $1\leq r_i 
, s_i \leq a_i\text { and } r_i \neq s_i$ we get $\zeta_{k_n}(-1, \mathcal{A}_i^{r_{i}})=\zeta_{k_n}(-1, \mathcal{A}_i^{-s_{i}})$, then we have $n=2p^{r_i+s_i}$, which is not 
possible. This implies $\mathcal{A}_i^{r_i} \neq \mathcal{A}_i^{-s_i}, \hspace*{2mm} \forall \hspace*{2mm} 1\leq i\leq m, \hspace*{1mm}  1 \leq r_i,s_i \leq a_i \text { and } r_i \neq s_i$. Therefore, $|\mathcal{A}_i| \geq 2{a}_i$ and hence 
$h(d) 
\geq 2(a_{1}+a_{2}+\cdots+ a_{m})-m+1$.
Now, $h(d) = 2(a_{1}+a_{2}+\cdots+ a_{m})-m+1$(only if $m$ is 
odd) if and only if 
$$\zeta_{k_n}(-1)=\zeta_{k_n}(-1, \mathcal{P})+\sum_{\substack{1 
\leq i \leq m\\ 1 \leq r_{i} \leq a_{i}-1}} 2\zeta_{k_n}(-1, 
\mathcal{A}_i^{r_i})+\sum_{i=1}^{m}\zeta_{k_n}(-1, \mathcal{A}
_i^{a_i}).$$
Thus $h(d) = 2(a_{1}+a_{2}+\cdots+ a_{m})-m+1$,  if and only if 
\begin{eqnarray*}
\zeta_{k_n}(-1)
&=&\frac{n^3+14n}{360}+\sum_{\substack{1 \leq i \leq m\\ 1 \leq 
r_{i} \leq a_{i}-1}} \frac{n^3+n(4{p_{i}}^{4r_{i}}+10{p_{i}}
^{2r_{i}})}{180{p_{i}}^{2r_{i}}}\\
& + & \sum_{i=1}^{m} \frac{n^3+n(4{p_{i}}^{4a_{i}}+10{p_{i}}
^{2a_{i}})}{360{p_{i}}^{2a_{i}}}.
\end{eqnarray*}
\end{proof}

\noindent{\textit{Remark:}} If all the $a_i$'s and $t$ are zero, then $n_0=1$. Hence $d=5$ and $h(5)=1$.

\subsection{$\mathbf{n^2+1\equiv 1 \pmod 8}$}  

In this case, $4|n$ and $2$ splits in  $k_n$ as
\begin{equation}\label{eqn3.3}
(2)=\left(2,\frac{1+\sqrt{d}}{2}\right)\left(2,\frac{1-\sqrt{d}}{2}\right).
\end{equation}

\begin{theorem}\label{thm3.3}
Let $n=2^sp^t$ with $s>1$, $t \geq 1$ and $p$ be an odd prime. Then $h(d) \geq 2(t+s)-4$ and equality holds if and only if 
\begin{eqnarray*}
\zeta_{k_n}(-1) &=& \frac{n^3+14n}{360}+\sum_{r=1}^{t-1} \frac{n^3+n(4p^{4r}+10p^{2r})}{180p^{2r}}\\
&+&\frac{n^3+n(4p^{4t}+10p^{2t})}{360p^{2t}}+\sum_{j=1}^{s-2} \frac{n^3+n(4\times2^{4j}+10\times2^{2j})}{180\times2^{2j}}.
\end{eqnarray*}
\end{theorem}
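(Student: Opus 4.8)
The plan is to run exactly the machine of Theorems~\ref{thm3.1} and~\ref{thm3.2}, now for $n=2^{s}p^{t}$, which has precisely two prime divisors: $2$ with exponent $s>1$, and the odd prime $p$ with exponent $t\ge 1$. Since $s>1$ we have $16\mid n^{2}$, so $n^{2}+1\equiv 1\pmod 8$; hence $p$ splits as in~\eqref{eqn3.1} and $2$ splits as in~\eqref{eqn3.3}. Let $\mathcal{A}$ be the ideal class of $\mathfrak{a}=\bigl(p,\frac{1+\sqrt{d}}{2}\bigr)$ and $\mathcal{B}$ the ideal class of $\mathfrak{b}=\bigl(2,\frac{1+\sqrt{d}}{2}\bigr)$. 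By Lemma~\ref{lem3.1}, $\{p^{r},\frac{1+\sqrt{d}}{2}\}$ (resp. $\{p^{r},\frac{1-\sqrt{d}}{2}\}$) is an integral basis of $\mathfrak{a}^{r}$ (resp. $\mathfrak{a}^{-r}$) for $1\le r\le t$; running the same argument (via \cite[Proposition 2.6 and 2.11]{Fra}, using $16\mid n^{2}$) for the prime $2$ shows that $\{2^{j},\frac{1+\sqrt{d}}{2}\}$ (resp. $\{2^{j},\frac{1-\sqrt{d}}{2}\}$) is an integral basis of $\mathfrak{b}^{j}$ (resp. $\mathfrak{b}^{-j}$) for $1\le j\le s$.

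Next I would feed these bases into Lemma~\ref{lem2.1}, Lemma~\ref{lem2.2} and Theorem~\ref{thm2.1} (recall $N(\varepsilon)=-1$), obtaining — exactly as in the proof of Theorem~\ref{thm3.1}, the second line being that computation with $p$ replaced by $2$ —
$$\zeta_{k_n}(-1,\mathcal{A}^{r})=\zeta_{k_n}(-1,\mathcal{A}^{-r})=\frac{n^{3}+n(4p^{4r}+10p^{2r})}{360p^{2r}},\qquad 1\le r\le t,$$
$$\zeta_{k_n}(-1,\mathcal{B}^{j})=\zeta_{k_n}(-1,\mathcal{B}^{-j})=\frac{n^{3}+n(4\cdot 2^{4j}+10\cdot 2^{2j})}{360\cdot 2^{2j}},\qquad 1\le j\le s.$$
Comparing these with $\zeta_{k_n}(-1,\mathcal{P})=\frac{n^{3}+14n}{360}$ from~\eqref{eqn3.2}: $\mathcal{A}^{r}=\mathcal{P}$ would force $n=2p^{r}$ and $\mathcal{B}^{j}=\mathcal{P}$ would force $n=2^{j+1}$, both impossible for $n=2^{s}p^{t}$ with $s>1$, $t\ge 1$; comparing the partial values among themselves, an equality $\zeta_{k_n}(-1,\mathcal{A}^{r})=\zeta_{k_n}(-1,\mathcal{A}^{\pm r'})$ forces $n=2p^{\,r+r'}$, an equality $\zeta_{k_n}(-1,\mathcal{B}^{j})=\zeta_{k_n}(-1,\mathcal{B}^{\pm j'})$ forces $n=2^{j+j'+1}$, and an equality $\zeta_{k_n}(-1,\mathcal{A}^{r})=\zeta_{k_n}(-1,\mathcal{B}^{\pm j})$ forces $n=2^{j+1}p^{\,r}$, i.e. $r=t$, $j=s-1$. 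Hence the only coincidences possible among $\mathcal{P}$ and the listed powers of $\mathcal{A}$ and $\mathcal{B}$ are $\mathcal{A}^{r}=\mathcal{A}^{-r}$, $\mathcal{B}^{j}=\mathcal{B}^{-j}$, and $\mathcal{A}^{\pm t}=\mathcal{B}^{\mp(s-1)}$.

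The final ingredient is the class-counting. The coincidence $\mathcal{A}^{t}\mathcal{B}^{s-1}=\mathcal{P}$ in fact always holds: the element $\alpha=\frac{(n+1)+\sqrt{d}}{2}$ lies in $\mathcal{O}_{k_n}$ (as $n$ is even), has norm $\frac{(n+1)^{2}-d}{4}=\frac{n}{2}=2^{s-1}p^{t}$, and lies in $\mathfrak{b}^{s-1}$ and in $\mathfrak{a}^{t}$ (read off the integral bases), so $(\alpha)=\mathfrak{b}^{s-1}\mathfrak{a}^{t}$ is principal and $\mathcal{B}^{s-1}=\mathcal{A}^{-t}$. Feeding this back, and using the non-principality statements, one checks that $\mathcal{B}^{j}\ne\mathcal{B}^{-j}$ for $1\le j\le s-2$ and that $\mathcal{A}^{r}=\mathcal{A}^{-r}$ for at most one value of $r$ (two such $r$ would give $\mathcal{A}^{2(r-r')}=\mathcal{P}$, too small to be allowed), whence the classes
$$\mathcal{P},\quad \mathcal{A}^{\pm 1},\ \dots,\ \mathcal{A}^{\pm(t-1)},\quad \mathcal{A}^{t},\quad \mathcal{B}^{\pm 1},\ \dots,\ \mathcal{B}^{\pm(s-2)}$$
are pairwise distinct and number $1+2(t-1)+1+2(s-2)=2(t+s)-4$, so $h(d)\ge 2(t+s)-4$. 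For the equality clause I would use $\zeta_{k_n}(-1)=\sum_{\mathfrak{C}}\zeta_{k_n}(-1,\mathfrak{C})$ over all ideal classes, each summand being positive; then $\zeta_{k_n}(-1)$ is at least the sum of the partial values over the classes just listed, with equality iff those are all the classes, i.e. $h(d)=2(t+s)-4$. Writing that sum out — $\zeta_{k_n}(-1,\mathcal{A}^{t})$ once (denominator $360$, since $\mathcal{A}^{t}=\mathcal{A}^{-t}$), $2\zeta_{k_n}(-1,\mathcal{A}^{r})$ for $1\le r\le t-1$, and $2\zeta_{k_n}(-1,\mathcal{B}^{j})$ for $1\le j\le s-2$ (denominators $180$) — and substituting the explicit values gives the displayed identity.

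The hard part is exactly this bookkeeping: one must ensure that no additional coincidences push $h(d)$ strictly past $2(t+s)-4$ — in particular that the ``top'' powers $\mathcal{A}^{\pm t}$, $\mathcal{B}^{\pm(s-1)}$ and the higher powers of $\mathcal{B}$ are already accounted for, which is precisely what the principal ideal $(\alpha)=\mathfrak{b}^{s-1}\mathfrak{a}^{t}$ is for — and, for the ``only if'' direction, that equality pins down which classes survive and forces the unique self-inverse power of $\mathcal{A}$ to be $\mathcal{A}^{t}$.
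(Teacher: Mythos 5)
Your proposal is correct and follows essentially the same route as the paper: the same partial Dedekind zeta values for the powers of $\mathcal{A}$ and $\mathcal{B}$ via Lemmas \ref{lem2.1}--\ref{lem2.2} and Theorem \ref{thm2.1}, the same pairwise comparisons of those values to separate classes, and the same count $1+2(t-1)+1+2(s-2)=2(t+s)-4$ with the equality clause read off from positivity of the partial zeta values. The one genuine addition is your verification, via the principal ideal $\left(\frac{(n+1)+\sqrt{d}}{2}\right)=\mathfrak{a}^{t}\mathfrak{b}^{s-1}$ of norm $n/2$, that the coincidence $\mathcal{B}^{s-1}=\mathcal{A}^{-t}$ always holds, whereas the paper only treats it as a possibility and subtracts one from the count to be safe, which yields the same bound.
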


\begin{proof}
Let $\mathcal{A}$ and $\mathcal{B}$ be the two ideal classes in $k_n$ such that $\mathfrak{a}=\left(p,\frac{1+\sqrt{d}}{2}\right) \in \mathcal{A}$ and $\mathfrak{b}=\left(2,\frac{1+\sqrt{d}}{2}\right) \in \mathcal{B}.$ Then as before,$$\zeta_{k_n}(-1,\mathcal{A}^r)=\frac{n^3+n(4p^{4r}+10p^{2r})}{360p^{2r}}=\zeta_{k_n}(-1,\mathcal{A}^{-r}), \hspace*{7mm} \forall \hspace*{2mm} 1 \leq r \leq t,$$ 
and 
\begin{eqnarray*}
\zeta_{k_n}(-1,\mathcal{B}^j)&=&\frac{n^3+n(4\times 2^{4j}+10 \times 2^{2j})}{360 \times 2^{2j}}\\
&=&\zeta_{k_n}(-1,\mathcal{B}^{-j}), \hspace*{40mm} \forall \hspace*{3mm}1 \leq j \leq s-1.
\end{eqnarray*}

If $\zeta_{k_n}(-1, \mathcal{P})=\zeta_{k_n}(-1, \mathcal{A}^r)$, then $n=2p^r$. This shows that $\mathcal{A}^r$ is a non-principal ideal class, $\forall \hspace*{2mm} 1 \leq r \leq t$. As $\zeta_{k_n}(-1,\mathcal{A}^r)=\zeta_{k_n}(-1,\mathcal{A}^{-r})$ and we have $\zeta_{k_n}(-1,\mathcal{A}^r)\neq \zeta_{k_n}(-1,\mathcal{A}^s)$, $\forall \hspace*{2mm} 1 \leq r,s \leq t, \hspace*{1mm} r \neq s $, hence $\mathcal{A}^{r} \neq \mathcal{A}^{-s}, \hspace*{2mm} \forall \hspace*{2mm} 1 \leq r,s \leq t \text { and } r \neq s$. Thus $|\mathcal{A}| \geq 2t$. Now if $\zeta_{k_n}(-1, \mathcal{P})=\zeta_{k_n}(-1, \mathcal{B}^j)$, we get $n=2\times2^j$, therefore as above, $|\mathcal{B}| \geq 2(s-1)$. And if $\zeta_{k_n}(-1, \mathcal{A}^r)=\zeta_{k_n}(-1, \mathcal{B}^j)$, we have $n=2\times2^jp^r$, this is only  possible when $r=t$ and $j=s-1$. So, while calculating class number we have to take care of $\mathcal{A}^t$ and $\mathcal{B}^{s-1}$ as they may be equal and we have to count them once. 
Hence $h(d) \geq (2t-1)+(2(s-1)-1)+1-1$, i.e. $h(d) \geq 2(t+s)-4$ and equality holds if and only if 
$$\zeta_{k_n}(-1)=\zeta_{k_n}(-1, \mathcal{P})+\sum_{r=1}^{t-1}2\zeta_{k_n}(-1, \mathcal{A}^r)+\zeta_{k_n}(-1, \mathcal{A}^t)+\sum_{j=1}^{s-2}2\zeta_{k_n}(-1, \mathcal{B}^j),$$
$$\text{or}$$
$$\zeta_{k_n}(-1)=\zeta_{k_n}(-1, \mathcal{P})+\sum_{r=1}^{t-1}2\zeta_{k_n}(-1, \mathcal{A}^r)+\zeta_{k_n}(-1, \mathcal{B}^{s-1})+\sum_{j=1}^{s-2}2\zeta_{k_n}(-1, \mathcal{B}^j),$$
since $\zeta_{k_n}(-1, \mathcal{A}^t)=\zeta_{k_n}(-1, \mathcal{B}^{s-1})$.
Thus $h(d)=2(t+s)-4$ if and only if
\begin{eqnarray*}
\zeta_{k_n}(-1) &=& \frac{n^3+14n}{360}+\sum_{r=1}^{t-1} \frac{n^3+n(4p^{4r}+10p^{2r})}{180p^{2r}}\\
&+&\frac{n^3+n(4p^{4t}+10p^{2t})}{360p^{2t}}+\sum_{j=1}^{s-2} \frac{n^3+n(4\times2^{4j}+10\times2^{2j})}{180\times2^{2j}}.
\end{eqnarray*}
\end{proof}

Using similar arguments, one can prove:

\begin{theorem}\label{thm3.4}
Let $n=2^s{p_{1}}^{a_{1}}{p_{2}}^{a_{2}}\cdots{p_{m}}^{a_{m}}$ with $p_{i}$'s be distinct odd primes, $a_i$'s are postive integers and  $s,m \geq 2$. If $m$ is even, then $h(d) \geq 2(a_{1}+a_{2}+\cdots+ a_{m})-m+2s-2$ and equality holds if and only if  
\begin{eqnarray*}
\zeta_{k_n}(-1)&=&\frac{n^3+14n}{360}+\sum_{\substack{1 \leq i \leq m\\ 1 \leq r_{i} \leq a_{i}-1}} \frac{n^3+n(4{p_{i}}^{4r_{i}}+10{p_{i}}^{2r_{i}})}{180{p_{i}}^{2r_{i}}}\\
 &+& \sum_{i=1}^{m} \frac{n^3+n(4{p_{i}}^{4a_{i}}+10{p_{i}}^{2a_{i}})}{360{p_{i}}^{2a_{i}}} + \sum_{j=1}^{s-2} \frac{n^3+n(4\times2^{4j}+10\times2^{2j})}{180\times2^{2j}} \\
 &+& \frac{n^3+n(4\times2^{4(s-1)}+10\times2^{2(s-1)})}{360\times2^{2(s-1)}}.
\end{eqnarray*}
If $m$ is odd, then $h(d) \geq 2(a_{1}+a_{2}+\cdots+ a_{m})-m+2s-1$ and $h(d)=2(a_{1}+a_{2}+\cdots+ a_{m})+2s-1$ if and only if  
\begin{eqnarray*}
\zeta_{k_n}(-1)&=&\frac{n^3+14n}{360}+\sum_{\substack{1 \leq i \leq m\\ 1 \leq r_{i} \leq a_{i}}}\frac{n^3+n(4{p_{i}}^{4r{i}}+10{p_{i}}^{2r_{i}})}{180{p_{i}}^{2r_{i}}}\\
&+& \sum_{j=1}^{s-1} \frac{n^3+n(4\times2^{4j}+10\times2^{2j})}{180\times2^{2j}}.
\end{eqnarray*}
\end{theorem}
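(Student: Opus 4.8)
The plan is to run the argument of Theorems~\ref{thm3.1}--\ref{thm3.3} once more, now with two families of generators for the ``visible'' part of $\mathfrak C(k_n)$. Since $s\ge 2$ we have $4\mid n$, hence $n^2+1\equiv 1\pmod 8$, so by \eqref{eqn3.1} and \eqref{eqn3.3} the prime $2$ and every $p_i$ split in $k_n$; let $\mathcal B$ be the class of $\mathfrak b=(2,\frac{1+\sqrt d}{2})$ and $\mathcal A_i$ the class of $\mathfrak a_i=(p_i,\frac{1+\sqrt d}{2})$. First I would use Lemma~\ref{lem3.1} for the $\mathfrak a_i$, and the same reasoning for $\mathfrak b$, to record the integral bases $\{p_i^{r_i},\frac{1+\sqrt d}{2}\}$ of $\mathfrak a_i^{r_i}$ $(1\le r_i\le a_i)$ and $\{2^{j},\frac{1+\sqrt d}{2}\}$ of $\mathfrak b^{j}$ $(1\le j\le s-1)$; feeding them into Lemma~\ref{lem2.1}, Lemma~\ref{lem2.2} and Theorem~\ref{thm2.1} gives, exactly as in the proofs of Theorems~\ref{thm3.2} and \ref{thm3.3},
\[
\zeta_{k_n}(-1,\mathcal A_i^{r_i})=\frac{n^3+n(4p_i^{4r_i}+10p_i^{2r_i})}{360\,p_i^{2r_i}}=\zeta_{k_n}(-1,\mathcal A_i^{-r_i}),\qquad
\zeta_{k_n}(-1,\mathcal B^{j})=\frac{n^3+n(4\cdot 2^{4j}+10\cdot 2^{2j})}{360\cdot 2^{2j}}=\zeta_{k_n}(-1,\mathcal B^{-j}).
\]

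Next comes the combinatorial core. Clearing denominators and using $\zeta_{k_n}(-1,\mathfrak A)=\zeta_{k_n}(-1,\mathfrak A^{-1})$, any coincidence of two partial zeta values among $\{\mathcal P\}\cup\{\mathcal A_i^{\pm r_i}\}\cup\{\mathcal B^{\pm j}\}$ forces $n$ into one of the forms $2p_i^{r_i}$, $2p_i^{r_i}p_j^{s_j}$ $(i\ne j)$, $2p_i^{r_i+s_i}$, $2^{j+1}$, $2^{j+k+1}$ or $2^{j+1}p_i^{r_i}$; none is possible, since $4\mid n$ and $n$ has $m\ge 2$ distinct odd prime divisors. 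Hence the $\mathcal A_i^{r_i}$ $(1\le r_i\le a_i)$ are pairwise distinct non-principal classes, distinct from every $\mathcal B^{j}$, with $\mathcal A_i^{r_i}\ne\mathcal A_i^{-s_i}$ whenever $r_i\ne s_i$, and likewise for the $\mathcal B^{j}$. From $\mathcal A_i^{r_i}\ne\mathcal P$ $(r_i\le a_i)$ and $\mathcal A_i^{r_i}\ne\mathcal A_i^{-s_i}$ $(r_i\ne s_i)$ it follows that the order of $\mathcal A_i$ exceeds $a_i$ and cannot lie in $\{a_i+1,\dots,2a_i-1\}$ (such an order would be a sum $r_i+s_i$ with $r_i\ne s_i$ in $[1,a_i]$), so $|\mathcal A_i|\ge 2a_i$; similarly $|\mathcal B|\ge 2(s-1)$. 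The classes $\mathcal P$, $\mathcal A_i^{r_i}$ $(1\le r_i\le a_i)$, $\mathcal A_i^{-r_i}$ $(1\le r_i\le a_i-1)$, $\mathcal B^{j}$ $(1\le j\le s-1)$, $\mathcal B^{-j}$ $(1\le j\le s-2)$ are then pairwise distinct; there are $2(a_1+\cdots+a_m)-m+2s-2$ of them, which settles the bound for $m$ even.

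For $m$ odd one gains one more class. If equality held in the preceding count, the listed classes would exhaust $\mathfrak C(k_n)$; then $\mathcal A_i^{-a_i}$, being in the list and (by the same comparisons) equal to none of the other listed classes, would equal $\mathcal A_i^{a_i}$, and likewise $\mathcal B^{-(s-1)}=\mathcal B^{s-1}$, so $|\mathcal A_i|=2a_i$, $|\mathcal B|=2(s-1)$ and $\mathfrak C(k_n)=\langle\mathcal A_1\rangle\cup\dots\cup\langle\mathcal A_m\rangle\cup\langle\mathcal B\rangle$. These $m+1\ge 3$ cyclic subgroups are proper (the count already exceeds each $2a_i$ and each $2(s-1)$) and pairwise intersect only in $\mathcal P$, so $\mathfrak C(k_n)$ is partitioned by them; a finite abelian group admitting such a partition is elementary abelian, say $(\mathbb Z/p\mathbb Z)^{k}$ with $k\ge 2$, forcing each of the \emph{cyclic} subgroups $\langle\mathcal A_i\rangle,\langle\mathcal B\rangle$ to have order $p$. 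Then $p=2a_i=2(s-1)$, so $p=2$, every $a_i=1$, $s=2$, and $(\mathbb Z/2\mathbb Z)^{k}$ is the union of $m+1$ of its order-$2$ subgroups, hence $2^{k}=m+2$; thus $m=2^{k}-2$ is even, a contradiction. So $h(d)\ge 2(a_1+\cdots+a_m)-m+2s-1$ when $m$ is odd.

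The equivalences with the Dedekind zeta value are then immediate from $\zeta_{k_n}(-1)=\sum_{\mathfrak A}\zeta_{k_n}(-1,\mathfrak A)$ and positivity of the partial values: equality in the class-number bound is the same as there being no ideal class outside the chosen list, in which case $\zeta_{k_n}(-1)$ equals the exhibited finite sum --- a pair $\mathcal A_i^{r_i},\mathcal A_i^{-r_i}$ (resp. $\mathcal B^{j},\mathcal B^{-j}$) with $r_i<a_i$ (resp. $j<s-1$) contributing the $1/180$ term $2\zeta_{k_n}(-1,\mathcal A_i^{r_i})$ and a self-inverse class $\mathcal A_i^{a_i}$ or $\mathcal B^{s-1}$ its $1/360$ term --- while conversely any extra class strictly increases $\zeta_{k_n}(-1)$ above that sum. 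For $m$ odd, the characterized value $2(a_1+\cdots+a_m)+2s-1$ is the size of the configuration in which all $\mathcal A_i^{a_i}\ne\mathcal A_i^{-a_i}$ and $\mathcal B^{s-1}\ne\mathcal B^{-(s-1)}$, which is why the corresponding identity carries only $1/180$ terms. The delicate part of the whole argument is precisely this bookkeeping of which classes are self-inverse and its translation into the $m$-parity dichotomy via the partition theorem; the zeta-value evaluations and the ``forbidden shapes of $n$'' comparisons are routine.
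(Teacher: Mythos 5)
Your proposal is correct and follows essentially the route the paper intends: the paper gives no written proof of Theorem \ref{thm3.4} (it only says ``using similar arguments''), and your argument is exactly the natural merger of the proofs of Theorems \ref{thm3.2} and \ref{thm3.3} --- partial zeta values for $\mathcal A_i^{\pm r_i}$ and $\mathcal B^{\pm j}$, elimination of coincidences because the forced shapes $2p_i^{r_i+s_i}$, $2^{j+k+1}$, $2^{j+1}p_i^{r_i}$, $2p_i^{r_i}p_j^{s_j}$ are incompatible with $4\mid n$ and $m\ge 2$ odd prime divisors, and then the class count. The one place you go beyond the paper is the justification of the extra $+1$ when $m$ is odd: the paper never explains its parity dichotomy (even in Theorem \ref{thm3.2} it only inserts the unexplained parenthetical ``only if $m$ is odd''), whereas your partition argument --- equality would force $\mathfrak C(k_n)$ to be a trivially-intersecting union of $m+1\ge 3$ proper cyclic subgroups, hence elementary abelian of exponent $2$ with $2^k-1=m+1$ subgroups, making $m$ even --- is a correct and welcome filling of that gap. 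The only point at the same level of informality as the paper itself is the converse direction of the $m$-odd zeta criterion (that $\zeta_{k_n}(-1)$ equal to the all-$1/180$ sum forces $h(d)=2\sum a_i+2s-1$ rather than a configuration with some $\mathcal A_i^{a_i}=\mathcal A_i^{-a_i}$ compensated by unlisted classes); this is glossed over identically in the source.
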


\noindent{\textit{Remark:}} When all the $a_i$'s and $t$ are zero, then $n=2^s$ and $d=2^{2s}+1$. By similar arguments one can show that $h(d) \geq s-1$.


\subsection{$\mathbf{d=n^2+1\equiv 2\pmod 4}$}

In this case, we have $n^2+1\equiv 2\pmod 4$ and $n$ is odd. If $\mathfrak{a}=(p,1+\sqrt{d}) \in \mathcal{A}$ then as before, using Lemma \ref{lem2.1}, Lemma \ref{lem2.1}, Lemma \ref{lem3.2} and Theorem \ref{thm2.1}, we get
 
\begin{equation} \label{eqn3.4}
\zeta_{k_n}(-1, \mathcal{A}^r)=\frac{8n^3+n(2p^{4r}+20p^{2r})}{360p^{2r}}=\zeta_{k_n}(-1, \mathcal{A}^{-r}), \hspace*{5mm} \forall \hspace*{2mm} 1\leq r \leq t.
\end{equation}

\begin{theorem}\label{thm3.5}
Let $n=p^t$ with $t \geq 1$ an integer. Then $h(d)$ is even and $h(d) \geq 2t$. Equality holds if and only if  $$\zeta_{k_n}(-1)=\frac{4n^3+11n}{180}+\sum_{r=1}^{t-1} \frac{8n^3+n(2p^{4r}+20p^{2r})}{180p^{2r}}+\frac{8n^3+n(2p^{4t}+20p^{2t})}{360p^{2t}}.$$
\end{theorem}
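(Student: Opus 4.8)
The plan is to prove the two assertions separately: the parity of $h(d)$ will come from genus theory, and the bound $h(d)\ge 2t$ (together with its equality criterion) from counting the powers of the ideal class of a prime above $p$, a count governed by the partial zeta values recorded in \eqref{eqn3.4}.

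First I would settle the parity. Since $d=n^{2}+1\equiv 2\pmod 4$ with $n=p^{t}$, the discriminant of $k_{n}$ is $D=4d$, and $d/2=(p^{2t}+1)/2$ is an odd integer greater than $1$; hence $D$ is divisible by $2$ and by at least one odd prime, so it has at least two distinct prime divisors. Therefore the number of genera of $k_{n}$, namely $2^{\mu-1}$ with $\mu$ the number of prime divisors of $D$, is at least $2$; as this number equals $[\,\mathrm{Cl}^{+}(k_{n}):\mathrm{Cl}^{+}(k_{n})^{2}\,]$ and $N(\varepsilon)=-1$ forces $\mathrm{Cl}^{+}(k_{n})=\mathfrak{C}(k_{n})$, we get $2\mid h(d)$. (Equivalently, the ramified prime above $2$ represents a class of order exactly $2$.)

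Next, for the lower bound, let $\mathcal{A}$ be the ideal class of $\mathfrak{a}=(p,1+\sqrt{d})$. By Lemma \ref{lem3.2}, $\{p^{r},1+\sqrt{d}\}$ and $\{p^{r},1-\sqrt{d}\}$ are integral bases for $\mathfrak{a}^{r}$ and $\mathfrak{a}^{-r}$ for $1\le r\le t$, so Lemmas \ref{lem2.1}, \ref{lem2.2} and Theorem \ref{thm2.1} give, as in \eqref{eqn3.4},
$$\zeta_{k_{n}}(-1,\mathcal{A}^{r})=\zeta_{k_{n}}(-1,\mathcal{A}^{-r})=f(r):=\frac{8n^{3}+n(2p^{4r}+20p^{2r})}{360\,p^{2r}}\qquad(1\le r\le t),$$
while $\zeta_{k_{n}}(-1,\mathcal{P})=\frac{4n^{3}+11n}{180}$ by \eqref{eqn3.2}. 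Two short computations, each collapsing to the impossibility of $p^{m}=4$ for an odd prime $p$ and a nonzero integer $m$, then establish: (i) $f(r)\ne\zeta_{k_{n}}(-1,\mathcal{P})$ for $1\le r\le t$ (the equation reduces to $2n(1-p^{2r})(4n^{2}-p^{2r})=0$, hence to $4n^{2}=p^{2r}$); and (ii) $f(r)\ne f(s)$ for $1\le r\ne s\le t$ (the equation reduces to $p^{2(r+s)-2t}=4$). From (i) and (ii) the classes $\mathcal{A},\dots,\mathcal{A}^{t}$ are distinct and non-principal, likewise $\mathcal{A}^{-1},\dots,\mathcal{A}^{-t}$, and $\mathcal{A}^{r}\ne\mathcal{A}^{-s}$ for $r\ne s$; hence the only possible coincidence among $\{\mathcal{A}^{j}:|j|\le t\}$ is $\mathcal{A}^{r}=\mathcal{A}^{-r}$, i.e. $\mathcal{A}^{2r}=\mathcal{P}$. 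Invoking (i) for $\mathcal{A}^{2r}$ (valid through \eqref{eqn3.4} when $2r\le t$) shows any such $r$ has $2r>t$, and if two values $t/2<r_{1}<r_{2}\le t$ occurred then $\mathcal{A}^{2(r_{2}-r_{1})}=\mathcal{P}$ with $0<2(r_{2}-r_{1})<t$, contradicting (i). So at most one such coincidence happens, $\{\mathcal{A}^{j}:|j|\le t\}$ has at least $2t$ elements, and $h(d)\ge 2t$.

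For the equality criterion, if $h(d)=2t$ then exactly one coincidence $\mathcal{A}^{2g}=\mathcal{P}$ (with $t/2<g\le t$) occurs; since $\mathcal{A}^{g}\ne\mathcal{P}$ while $\mathcal{A}^{j}\ne\mathcal{P}$ for $j\le t$, the order of $\mathcal{A}$ divides $2g$, exceeds $t$, hence equals $2g$, and $2g\le h(d)=2t$ gives $g=t$, so $\mathfrak{C}(k_{n})=\langle\mathcal{A}\rangle=\{\mathcal{P},\mathcal{A}^{\pm1},\dots,\mathcal{A}^{\pm(t-1)},\mathcal{A}^{t}\}$. Summing the partial zeta values over these $2t$ classes, using $\zeta_{k_{n}}(-1)=\sum_{\mathfrak{A}}\zeta_{k_{n}}(-1,\mathfrak{A})$, reproduces exactly the displayed identity. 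Conversely, since partial zeta values of a real quadratic field are non-negative and the classes $\mathcal{P},\mathcal{A}^{\pm1},\dots,\mathcal{A}^{\pm(t-1)},\mathcal{A}^{t}$ contribute at least $\zeta_{k_{n}}(-1,\mathcal{P})+2\sum_{r=1}^{t-1}f(r)+f(t)$ to $\zeta_{k_{n}}(-1)$ (even if $\mathcal{A}^{t}\ne\mathcal{A}^{-t}$, both have partial value $f(t)$), the displayed identity can hold only if these are all the classes, i.e. $h(d)=2t$.

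I expect the main obstacle to be the combinatorial bookkeeping in the third paragraph — excluding a second ``fold'' of $\langle\mathcal{A}\rangle$ so that exactly $2t$ classes are forced, and, in the equality case, pinning the class group down to $\langle\mathcal{A}\rangle$ itself; the genus-theoretic parity and the arithmetic relations (i)--(ii) are routine once one isolates the reduction to ``$p^{m}\ne 4$''.
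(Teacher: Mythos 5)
Your proof is correct and, for the main counting argument, takes essentially the same route as the paper: compare the partial zeta values $\zeta_{k_n}(-1,\mathcal{A}^{\pm r})$ from \eqref{eqn3.4} with $\zeta_{k_n}(-1,\mathcal{P})$ and with each other, reduce every potential coincidence of classes to an impossible identity of the form $2n=p^{m}$, and convert the count of distinct classes into the bound and the equality criterion via $\zeta_{k_n}(-1)=\sum_{\mathfrak{A}}\zeta_{k_n}(-1,\mathfrak{A})$ together with positivity of the partial values. The genuine divergence is the parity statement: the paper works with the ramified class $\mathcal{B}$ containing $(2,d)$, computes $\zeta_{k_n}(-1,\mathcal{B})=\frac{2n^3+28n}{360}$ from Byeon--Kim, checks $\mathcal{B}\neq\mathcal{P}$, and concludes $|\mathcal{B}|=2$, whereas you invoke genus theory plus $N(\varepsilon)=-1$. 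Both are valid, but the paper's choice also buys the observation $\zeta_{k_n}(-1,\mathcal{B})=\zeta_{k_n}(-1,\mathcal{A}^{t})$, which is how it explains the weight-one last summand ($\mathcal{A}^{t}$ may coincide with the order-two class $\mathcal{B}$); you obtain the same weight-one term from the forced coincidence $\mathcal{A}^{t}=\mathcal{A}^{-t}$, and your bookkeeping excluding a second ``fold'' of $\langle\mathcal{A}\rangle$ is more explicit than the paper's one-line ``This gives $|\mathcal{A}|\geq 2t$.'' One small repair in your equality paragraph: ``$2g\le h(d)=2t$'' only yields $g\le t$; to force $g=t$, either note that the $2t$ distinct classes you have already produced all lie in $\langle\mathcal{A}\rangle$, whose order is $2g$, or apply your relation (ii) to $\mathcal{A}^{t}=\mathcal{A}^{-(2g-t)}$ when $g<t$.
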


\begin{proof}
If $d \equiv 2 \mod 4$ then $2$ ramifies in $k_n=\mathbb{Q}{(\sqrt{d})}$, i.e. $$(2)=(2,d)^2.$$
If $\mathfrak{b}=(2,d)$ is in ideal class $\mathcal{B}$, then by \cite[Theorem 2.3]{BK1} we have 
$$\zeta_{k_n}(-1, \mathcal{B})=\frac{2n^3+28n}{360}.$$
Also if $\zeta_{k_n}(-1, \mathcal{P})=\zeta_{k_n}(-1, \mathcal{B})$, then $d=2$. Therefore $\mathcal{B}$ is non-principal ideal class and $|\mathcal{B}|=2$. Hence $h(d)$ is even.\\
Now if $\mathfrak{a}=(p,1+\sqrt{d}) \in \mathcal{A}$, then by \eqref{eqn3.4} we have 
$$\zeta_{k_n}(-1, \mathcal{A}^r)=\frac{8n^3+n(2p^{4r}+20p^{2r})}{360p^{2r}}=\zeta_{k_n}(-1, \mathcal{A}^{-r}), \hspace*{5mm} \forall \hspace*{3mm} 1\leq r \leq t.$$
Now if $\zeta_{k_n}(-1, \mathcal{P})=\zeta_{k_n}(-1, \mathcal{A}^r)$, then $n=\frac{p^r}{2}$, which is not possible. Hence $\mathcal{A}^r$ is a non-principal ideal class,  $ \forall \hspace*{2mm} 1 \leq r \leq t.$
Also $\zeta_{k_n}(-1, \mathcal{A}^r)=\zeta_{k_n}(-1, \mathcal{A}^{-r})$, $ \forall \hspace*{2mm} 1 \leq r \leq t$, and if $\zeta_{k_n}(-1, \mathcal{A}^r)=\zeta_{k_n}(-1, \mathcal{A}^s)$, where $1 \leq r,s \leq t$ and $r \neq s$, then $n=\frac{p^{r+s}}{2}$, which is again not possible. This gives $|\mathcal{A}| \geq 2t.$ 
Similarly, $\mathcal{B} \neq \mathcal{A}^r$, $\forall \hspace*{2mm} 1 \leq r < t$ and therefore $h(d) \geq 2t$. Equality holds if and only if 
\begin{eqnarray*}
\zeta_{k_n}(-1)&=&\zeta_{k_n}(-1, \mathcal{P})+\sum_{r=1}^{t-1}2\zeta_{k_n}(-1, \mathcal{A}^r)+\zeta_{k_n}(-1, \mathcal{A}^t)\\
&=&\zeta_{k_n}(-1, \mathcal{P})+\sum_{r=1}^{t-1}2\zeta_{k_n}(-1, \mathcal{A}^r)+\zeta_{k_n}(-1, \mathcal{B}),
\end{eqnarray*}
as $\zeta_{k_n}(-1, \mathcal{A}^t)=\zeta_{k_n}(-1, \mathcal{B})$. Thus $h(d)=2t$ if and only if
$$\zeta_{k_n}(-1)=\frac{4n^3+11n}{180}+\sum_{r=1}^{t-1} \frac{8n^3+n(2p^{4r}+20p^{2r})}{180p^{2r}}+\frac{8n^3+n(2p^{4t}+20p^{2t})}{360p^{2t}}.$$
\end{proof}

If $n={p_{1}}^{a_{1}}{p_{2}}^{a_{2}}\cdots{p_{m}}^{a_{m}}$ with $p_{i}$'s be distinct odd primes and  $m \geq 2$, following similar arguments used in previous results, one can prove;
\begin{theorem}\label{thm3.6}
Let $n={p_{1}}^{a_{1}}{p_{2}}^{a_{2}}\cdots{p_{m}}^{a_{m}}$ with $p_{i}$'s distinct odd primes, $a_i$'s are postive integers and  $m \geq 2$. Then $h(d) \geq 2(a_{1}+a_{2}+\cdots+ a_{m})-m+2$. And $h(d) = 2(a_{1}+a_{2}+\cdots+ a_{m})-m+2$ if and only if
\begin{eqnarray*}
\zeta_{k_n}(-1)&=&\frac{n^3+5n}{36}+\sum_{\substack{1 \leq i \leq m\\ 1 \leq r_{i} \leq a_{i}-1}} \frac{8n^3+n(2{p_{i}}^{4r{i}}+20{p_{i}}^{2r_{i}})}{180{p_{i}}^{2r_{i}}}\\
 &+& \sum_{i=1}^{m} \frac{8n^3+n(2{p_{i}}^{4a{i}}+20{p_{i}}^{2a_{i}})}{360{p_{i}}^{2a_{i}}}.
\end{eqnarray*}
\end{theorem}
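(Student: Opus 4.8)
My plan is to run the argument of Theorem \ref{thm3.5} with the $m\ge 2$ split primes $p_1,\dots,p_m\mid n$ in place of the single prime $p$, together with the ramified prime $2$, and then read off the lower bound by counting pairwise distinct ideal classes whose partial zeta values at $-1$ I can compute. First I would record the class of the ramified prime: since $d=n^2+1\equiv 2\pmod 4$ we have $(2)=(2,d)^2$, so if $\mathcal{B}$ is the class of $\mathfrak{b}=(2,d)$ then $\mathcal{B}^2=\mathcal{P}$, and by \cite[Theorem 2.3]{BK1}, $\zeta_{k_n}(-1,\mathcal{B})=\frac{2n^3+28n}{360}$; comparing with $\zeta_{k_n}(-1,\mathcal{P})=\frac{4n^3+11n}{180}$ shows $\mathcal{B}=\mathcal{P}$ would force $n=1$, which is excluded since $m\ge 2$, so $\mathcal{B}$ has order exactly $2$. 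Next, each $p_i$ splits (as in the $n^2+1\equiv 2\pmod 4$ case of \eqref{eqn3.1}); taking $\mathcal{A}_i$ to be the class of $\mathfrak{a}_i=(p_i,1+\sqrt{d})$, Lemma \ref{lem3.2} gives the integral bases of the powers $\mathfrak{a}_i^{\,r}$ for $1\le r\le a_i$, so \eqref{eqn3.4} applies and gives
$$\zeta_{k_n}(-1,\mathcal{A}_i^{\,r})=\frac{8n^3+n(2p_i^{4r}+20p_i^{2r})}{360\,p_i^{2r}}=\zeta_{k_n}(-1,\mathcal{A}_i^{-r}),\qquad 1\le r\le a_i.$$

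The heart of the proof will be a short list of comparisons of these explicit rationals, each reducing to an impossible equation. Writing $\varphi(x)=\frac{n^3}{45x}+\frac{nx}{180}+\frac{n}{18}$, so that $\zeta_{k_n}(-1,\mathcal{A}_i^{\pm r})=\varphi(p_i^{2r})$, and using that $\varphi(x_1)=\varphi(x_2)$ forces $x_1=x_2$ or $x_1x_2=4n^2$, I would check the four cases: $\zeta_{k_n}(-1,\mathcal{A}_i^{\,r})=\zeta_{k_n}(-1,\mathcal{P})$ forces $p_i^{\,r}=2n$; $\zeta_{k_n}(-1,\mathcal{A}_i^{\,r})=\zeta_{k_n}(-1,\mathcal{A}_i^{\,s})$ with $r\ne s$ forces $p_i^{\,r+s}=2n$; $\zeta_{k_n}(-1,\mathcal{A}_i^{\,r})=\zeta_{k_n}(-1,\mathcal{A}_j^{\,s})$ with $i\ne j$ forces $p_i^{\,r}p_j^{\,s}=2n$; and $\zeta_{k_n}(-1,\mathcal{B})=\zeta_{k_n}(-1,\mathcal{A}_i^{\,r})$ forces $p_i^{2r}\in\{4,n^2\}$. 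The first three are impossible because the left sides are odd while $2n$ is even, and the last because $p_i$ is odd and, since $m\ge 2$, $n$ is not a prime power. Together with $\zeta_{k_n}(-1,\mathcal{A}_i^{-r})=\zeta_{k_n}(-1,\mathcal{A}_i^{\,r})$, these comparisons will show that the sets $S_i:=\{\mathcal{A}_i^{\pm r}:1\le r\le a_i\}$ consist of non-principal classes, are pairwise disjoint, and omit $\mathcal{B}$; and that inside $S_i$ the only possible collapse is $\mathcal{A}_i^{\,r}=\mathcal{A}_i^{-r}$ for at most one $r$ (two such would give $|\mathcal{A}_i|\mid 2\gcd(r,r')<2a_i$, contradicting $|\mathcal{A}_i|\ge 2a_i$, obtained just as in the proof of Theorem \ref{thm3.5}), so that $|S_i|\ge 2a_i-1$. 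Counting $\mathcal{P},\mathcal{B}$ and $S_1\sqcup\cdots\sqcup S_m$ then yields $h(d)=|\mathfrak{C}(k_n)|\ge 2+\sum_{i=1}^m(2a_i-1)=2(a_1+\cdots+a_m)-m+2$.

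For the equality clause I would argue that the bound is attained exactly when $|S_i|=2a_i-1$ for every $i$, i.e. $\mathcal{A}_i^{\,a_i}=\mathcal{A}_i^{-a_i}$ and $|\mathcal{A}_i|=2a_i$, and there is no further class; then $\mathfrak{C}(k_n)=\{\mathcal{P},\mathcal{B}\}\sqcup S_1\sqcup\cdots\sqcup S_m$, and summing the partial zeta values over all classes---using $\zeta_{k_n}(-1,\mathcal{P})+\zeta_{k_n}(-1,\mathcal{B})=\frac{n^3+5n}{36}$ and $\zeta_{k_n}(-1,\mathcal{A}_i^{\,r})=\zeta_{k_n}(-1,\mathcal{A}_i^{-r})$---recovers precisely the displayed identity. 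Conversely, since each partial zeta value $\zeta_{k_n}(-1,\mathcal{C})$ is positive, equality of $\zeta_{k_n}(-1)$ with the right-hand side will force both $\mathcal{A}_i^{\,a_i}=\mathcal{A}_i^{-a_i}$ for all $i$ (this is exactly why the final sum carries the coefficient $\tfrac1{360}$ rather than $\tfrac1{180}$) and the absence of any class outside $\{\mathcal{P},\mathcal{B}\}\cup\{\mathcal{A}_i^{\pm r}:1\le r\le a_i-1\}\cup\{\mathcal{A}_i^{\,a_i}\}$, whence $h(d)=2(a_1+\cdots+a_m)-m+2$. I expect the main difficulty to be purely organizational: confirming that the only collapse within each $S_i$ is the single identification $\mathcal{A}_i^{\,a_i}=\mathcal{A}_i^{-a_i}$, that the $S_i$ and $\mathcal{B}$ are genuinely pairwise disjoint, and that in the equality case the ``counted once'' term of the formula corresponds exactly to this forced identification; the only analytic ingredient is the positivity of the partial Dedekind zeta values at $-1$, which is standard and already underlies the earlier theorems.
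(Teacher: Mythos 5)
Your proposal is correct and is essentially the argument the paper intends: the paper gives no written proof of Theorem \ref{thm3.6}, only the remark that it follows by the arguments of Theorem \ref{thm3.5}, and your write-up (ramified class $\mathcal{B}$ of order $2$, the classes $\mathcal{A}_i^{\pm r}$ with partial zeta values $\varphi(p_i^{2r})$, the dichotomy $x_1=x_2$ or $x_1x_2=4n^2$ ruling out all coincidences except $\mathcal{A}_i^{a_i}=\mathcal{A}_i^{-a_i}$, and positivity of partial zeta values for the equality criterion) is exactly that argument carried out in full. All the explicit comparisons you list check out, so there is nothing to correct.
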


\noindent{\textit{Remark:}} If all the $a_i$'s and $t$ are zero, then $n=1$ and $d=2$. Hence $h(2)=1$.

\section{\textbf{The field $\mathbf{\mathbb{Q}(\sqrt{n^2+4})}$}}
We now study the class number of $k_n=\mathbb{Q}(\sqrt{d})$, where $d=n^2+4$ is a square-free positive integer. Clearly $d \equiv 5 \pmod 8$ and $n$ is odd. In this case, $\varepsilon= \frac{n+\sqrt{n^2+4}}{2}, \text { and } N(\varepsilon)=-4$.
Let $p|n$ then 
\begin{equation}\label{eqn4.1}
(p)=\left(p,\frac{p+2+\sqrt{d}}{2}\right)\left(p,\frac{p+2-
\sqrt{d}}{2}\right).
\end{equation}

By \cite[Theorem 2.3]{BK1}, we also know that
\begin{equation}\label{eqn4.2}
\zeta_{k_n}(-1, \mathcal{P})=\frac{n^3+11n}{360}.
\end{equation}

\begin{lemma}\label{lem4.1}
Let $n^2+4 \equiv 5 \pmod 4$, $p^t \mid\mid n$ be a prime, $\mathfrak{a}=\left(p,\frac{p+2+\sqrt{d}}{2}\right)$ and $\mathfrak{a}^{-1}=\left(p,\frac{p+2-\sqrt{d}}{2}\right)$. Then $\{p^r,\frac{p+2+\sqrt{d}}{2}\}$ and $\{p^r,\frac{p+2-\sqrt{d}}{2}\}$ is an 
integral basis for $\mathfrak{a}^r$ and $\mathfrak{a}^{-r}$, respectively, $ \forall \hspace*{2mm} 1 \leq r \leq t$. 
\end{lemma}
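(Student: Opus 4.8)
The plan is to run, essentially word for word, the argument of Lemma \ref{lem3.1}, with the element $\frac{1+\sqrt d}{2}$ there replaced throughout by $\frac{p+2+\sqrt d}{2}$. First I would record that $\frac{p+2+\sqrt d}{2}$ is an algebraic integer: since $d=n^2+4\equiv 1\pmod 4$ we have $\mathcal O_{k_n}=\mathbb Z\!\left[\frac{1+\sqrt d}{2}\right]$, and because $p$ is odd, $\frac{p+2+\sqrt d}{2}=\frac{p+1}{2}+\frac{1+\sqrt d}{2}\in\mathcal O_{k_n}$. Consequently, for each $r$ with $1\le r\le t$, the pair $\{p^r,\frac{p+2+\sqrt d}{2}\}$ is a $\mathbb Z$-basis of a full-rank submodule $M_r\subseteq\mathcal O_{k_n}$, and the change-of-basis matrix from $\left\{1,\frac{1+\sqrt d}{2}\right\}$ to this pair is triangular with determinant $p^r$; hence $M_r$ has index $p^r$ in $\mathcal O_{k_n}$.

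The crux is to show that each $M_r$ is in fact an integral ideal with $N(M_r)=p^r$; for this I would appeal to \cite[Propositions 2.6 and 2.11]{Fra}, exactly as in Lemma \ref{lem3.1}. Spelled out, one must verify the divisibility conditions characterizing when a module of the form $\left[p^r,\,b+\frac{1+\sqrt d}{2}\right]$ is closed under multiplication by $\mathcal O_{k_n}$, using $p^t\mid\mid n$ together with \eqref{eqn4.1}, which already handles $r=1$ (namely $\mathfrak a=\left(p,\frac{p+2+\sqrt d}{2}\right)$ is one of the two primes of $k_n$ above $p$, so $N(\mathfrak a)=p$). I expect this valuation bookkeeping --- making the criterion of \cite{Fra} hold simultaneously for every $r\le t$ --- to be the main obstacle; it is the only place where the hypothesis $r\le t$ (rather than arbitrary $r$) enters.

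Granting that, the proof closes as in Lemma \ref{lem3.1}. Writing $\alpha=\frac{p+2+\sqrt d}{2}$, the ideal $\mathfrak a^r$ is generated over $\mathcal O_{k_n}$ by $p^r$ and the products $p^{\,r-i}\alpha^{\,i}$ for $1\le i\le r$; since $M_r$ is an ideal containing both $p^r$ and $\alpha$, it contains $\alpha\,\mathcal O_{k_n}$, hence all of these generators, so $\mathfrak a^r\subseteq M_r$. Comparing norms, $N(\mathfrak a^r)=N(\mathfrak a)^r=p^r=N(M_r)$, so the inclusion is an equality: $\{p^r,\alpha\}$ is an integral basis of $\mathfrak a^r$. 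Finally, applying the nontrivial automorphism of $k_n/\mathbb Q$ sends $\mathfrak a$ to $\mathfrak a^{-1}=\left(p,\frac{p+2-\sqrt d}{2}\right)$ (the other prime in \eqref{eqn4.1}, whose product with $\mathfrak a$ is $(p)$) and carries the basis $\{p^r,\alpha\}$ of $\mathfrak a^r$ to $\left\{p^r,\frac{p+2-\sqrt d}{2}\right\}$, which gives the asserted integral basis of $\mathfrak a^{-r}$ for all $1\le r\le t$.
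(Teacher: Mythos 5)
Your proposal is correct and follows essentially the same route as the paper: the paper likewise forms the module $M_r=\bigl[p^r,\frac{p+2+\sqrt d}{2}\bigr]$, invokes \cite[Propositions 2.6 and 2.11]{Fra} to see it is an ideal of norm $p^r$, and concludes from $\mathfrak a^r\subseteq M_r$ and $N(\mathfrak a^r)=p^r$ that $M_r=\mathfrak a^r$, handling $\mathfrak a^{-r}$ "similarly" (you do this via the nontrivial automorphism, which is the same thing made explicit). The extra details you supply --- integrality of $\frac{p+2+\sqrt d}{2}$, the index computation, and the explicit inclusion $\mathfrak a^r\subseteq M_r$ --- are all consistent with, and slightly more careful than, the paper's own write-up.
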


\begin{proof}
Consider $$M_{r}=\Big[p^r,\frac{p^r+2+\sqrt{d}}{2}\Big],$$ 
a nonzero $\mathbb{Z}$-module in $\mathcal{O}_{k_n}$.
Then, by \cite[Proposition 2.6 and 2.11]{Fra}, $M_{r}$ is an ideal and and $N(M_{r})=p^r$. As $ \mathfrak{a}^r \subseteq M_{r}$, $\forall \hspace{2mm} 1 \leq r \leq t$, and $N(\mathfrak{a}^r)=p^r$, one has $ M_{r} = \mathfrak{a}^r$ and hence $\{p^r,\frac{p+2+\sqrt{d}}{2}\}$ is an integral basis for $\mathfrak{a}^r$, $\forall \hspace{2mm} 1 \leq r \leq t$. Similarly $\{p^r,\frac{p+2-\sqrt{d}}{2}\}$  is an integral basis for $\mathfrak{a}^{-r},$ $\forall \hspace{2mm} 1 \leq r \leq t$.
\end{proof}

\begin{theorem}\label{thm4.1}
Let $d=n^2+4$ and  $n=p^t$ with $p$ be an odd prime and $t \geq 1$ be an integer. Then $h(d) \geq t$ and equality holds if and only if  $$\zeta_{k_n}(-1)=\frac{n^3+11n}{360}+\sum_{r=1}^{t-1} \frac{n^3+n(p^{4r}+10p^{2r})}{360p^{2r}}.$$
\end{theorem}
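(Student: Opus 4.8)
The argument will run exactly parallel to that of Theorem~\ref{thm3.1}, with Lemma~\ref{lem4.1} and \eqref{eqn4.1}--\eqref{eqn4.2} playing the roles that Lemma~\ref{lem3.1} and \eqref{eqn3.1}--\eqref{eqn3.2} played there. Since $p\mid n$, the prime $p$ splits in $k_n$ by \eqref{eqn4.1}; fix an ideal class $\mathcal{A}$ containing $\mathfrak{a}=\left(p,\frac{p+2+\sqrt{d}}{2}\right)$, so that $\mathfrak{a}^{-1}=\left(p,\frac{p+2-\sqrt{d}}{2}\right)\in\mathcal{A}^{-1}$, and recall from Lemma~\ref{lem4.1} that $\{p^{r},\frac{p+2+\sqrt{d}}{2}\}$ and $\{p^{r},\frac{p+2-\sqrt{d}}{2}\}$ are integral bases of $\mathfrak{a}^{r}$ and $\mathfrak{a}^{-r}$ for $1\le r\le t$.

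The first, and principal, step is to convert these bases into values of the partial zeta function. Taking $\mathfrak{a}^{-r}\in\mathcal{A}^{-r}$ with $r_{1}=p^{r}$ and $r_{2}=\frac{p+2-\sqrt{d}}{2}$, and the fundamental unit $\varepsilon=\frac{n+\sqrt{d}}{2}$, one finds $\delta(\mathfrak{a}^{-r})=p^{r}\sqrt{d}$ and $N(\mathfrak{a}^{-r})=p^{r}$, uses Lemma~\ref{lem2.1} to write down the associated matrix $M=\begin{bmatrix} a & b \\ c & d \end{bmatrix}$, checks that the generalized Dedekind sums occurring in Theorem~\ref{thm2.1} are of the shape covered by Lemma~\ref{lem2.2}, and substitutes everything into Lang's formula. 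After simplification this should give
\[
\zeta_{k_{n}}(-1,\mathcal{A}^{r})=\frac{n^{3}+n\left(p^{4r}+10p^{2r}\right)}{360\,p^{2r}}=\zeta_{k_{n}}(-1,\mathcal{A}^{-r}),\qquad 1\le r\le t .
\]

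Granting this identity, the remainder is elementary and follows the template of Theorem~\ref{thm3.1}. Comparing with $\zeta_{k_{n}}(-1,\mathcal{P})=\frac{n^{3}+11n}{360}$ from \eqref{eqn4.2}: if $\mathcal{A}^{r}=\mathcal{P}$ for some $1\le r\le t$, then $\zeta_{k_{n}}(-1,\mathcal{A}^{r})=\zeta_{k_{n}}(-1,\mathcal{P})$, and clearing denominators and cancelling the nonzero factor $1-p^{2r}$ forces $n^{2}=p^{2r}$, i.e.\ $n=p^{r}$, which is possible only for $r=t$. Hence $\mathcal{A}^{r}$ is non-principal for $1\le r\le t-1$, so $\mathcal{A}$ has order at least $t$, and $h(d)\ge|\mathcal{A}|\ge t$. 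Since every partial value $\zeta_{k_{n}}(-1,\mathcal{C})$ is positive, summing over the ideal classes of $k_n$ yields
\[
\zeta_{k_{n}}(-1)=\sum_{\mathcal{C}\in\mathfrak{C}(k_{n})}\zeta_{k_{n}}(-1,\mathcal{C})\ \ge\ \zeta_{k_{n}}(-1,\mathcal{P})+\sum_{r=1}^{t-1}\zeta_{k_{n}}(-1,\mathcal{A}^{r}),
\]
because $\mathcal{P},\mathcal{A},\dots,\mathcal{A}^{t-1}$ are $t$ distinct classes; equality holds precisely when they exhaust $\mathfrak{C}(k_{n})$, that is, when $h(d)=t$. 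Substituting \eqref{eqn4.2} and the formula for $\zeta_{k_{n}}(-1,\mathcal{A}^{r})$ into the right-hand side gives the stated equivalent condition.

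The only genuine obstacle is the explicit evaluation of $\zeta_{k_{n}}(-1,\mathcal{A}^{r})$: assembling $M$ from the trace formula of Lemma~\ref{lem2.1} with the specific data of $\mathbb{Q}(\sqrt{n^{2}+4})$, confirming that the arguments of the Dedekind sums reduce so that Lemma~\ref{lem2.2} applies, and then collapsing the cubic expression of Theorem~\ref{thm2.1} --- built from $a+d$, $N(\varepsilon)$ and the sums $S^{2},S^{3}$ --- down to the clean rational function above. Everything beyond that identity is formal group theory together with the positivity of partial zeta values, exactly as in $\S3$.
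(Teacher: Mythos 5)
Your proposal is correct and follows essentially the same route as the paper: fix the class $\mathcal{A}$ of $\mathfrak{a}=\left(p,\frac{p+2+\sqrt{d}}{2}\right)$, use Lemma \ref{lem4.1} with Lemmas \ref{lem2.1}--\ref{lem2.2} and Theorem \ref{thm2.1} to get $\zeta_{k_n}(-1,\mathcal{A}^r)=\frac{n^3+n(p^{4r}+10p^{2r})}{360p^{2r}}$, compare with \eqref{eqn4.2} to see $\mathcal{A}^r$ is non-principal for $1\le r\le t-1$, and conclude $h(d)\ge t$ with equality exactly when the partial zeta values of $\mathcal{P},\mathcal{A},\dots,\mathcal{A}^{t-1}$ sum to $\zeta_{k_n}(-1)$. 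The paper likewise leaves the explicit evaluation of the partial zeta value to the cited lemmas rather than carrying out the Dedekind-sum computation, so your level of detail matches its proof.
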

\begin{proof}
Let $\mathcal{A}$ be an ideal class containing $\mathfrak{a}=\left(p,\frac{p+2+\sqrt{d}}{2}\right).$
Then by using Lemma \ref{lem2.1}, Lemma \ref{lem2.2}, Lemma \ref{lem4.1} and Theorem \ref{thm2.1} we obtain: 
$$\zeta_{k_n}(-1, \mathcal{A}^{r})=\frac{n^3+n(p^{4r}+10p^{2r})}{360p^{2r}}=\zeta_{k_n}(-1, \mathcal{A}^{-r}), \hspace{5mm} \forall \hspace*{2mm} 1\leq r \leq t.$$
 If for any $1\leq r \leq t$, $\zeta_{k_n}(-1, \mathcal{P})=\zeta_{k_n}(-1, \mathcal{A}^r)$, then $n=p^r$. Hence $|\mathcal{A}| \geq t$. This implies that $h(d) \geq t$ and $h(d)=t$ if and only if $$\zeta_{k_n}(-1)=\zeta_{k_n}(-1, \mathcal{P})+\sum_{r=1}^{t-1}\zeta_{k_n}(-1, \mathcal{A}^r).$$
\end{proof}

Using similar arguments one gets:

\begin{theorem}\label{thm4.2}
Let $n={p_{1}}^{a_{1}}{p_{2}}^{a_{2}}\cdots{p_{m}}^{a_{m}}$ with $p_{i}$'s distinct odd primes and $a_i$'s are postive integers.
\begin{itemize}
\item[(i)] If $m>2$, then $h(d) \geq 2(a_{1}+a_{2}+\cdots+ a_{m})-m+1$.
Equality holds (only if $m$ is odd) if and only if 
\begin{eqnarray*}
\zeta_{k_n}(-1)&=&\frac{n^3+11n}{360}+\sum_{\substack {1 \leq i \leq m\\ 1 \leq r_{i} \leq a_{i}-1}} \frac{n^3+n({p_{i}}^{4r_{i}}+10{p_{i}}^{2r_{i}})}{180{p_{i}}^{2r_{i}}}\\
& +& \sum_{i=1}^{m} \frac{n^3+n({p_{i}}^{4a_{i}}+10{p_{i}}^{2a_{i}})}{360{p_{i}}^{2a_{i}}}.
\end{eqnarray*}
\item[(ii)] If $m=2$, then $h(d) \geq 2(a_{1}+a_{2})-2$ and equality holds if and only if  
\begin{eqnarray*}
\zeta_{k_n}(-1)&=&\frac{n^3+11n}{360}+\sum_{\substack {1 \leq i \leq 2\\ 1 \leq r_{i} \leq a_{i}-1}} \frac{n^3+n({p_{i}}^{4r_{i}}+10{p_{i}}^{2r_{i}})}{180{p_{i}}^{2r_{i}}}\\
& +& \frac{n^3+n({p_1}^{4a_1}+10{p_1}^{2a_1})}{360{p_1}^{2a_1}}.
\end{eqnarray*}
\end{itemize}
\end{theorem}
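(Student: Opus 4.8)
The plan is to run the proof of Theorem~\ref{thm4.1} in parallel over all the prime divisors of $n$, exactly as Theorem~\ref{thm3.2} extends Theorem~\ref{thm3.1}. Since $p_{i}\mid n$, the prime $p_{i}$ splits in $k_{n}$ by~\eqref{eqn4.1}; let $\mathcal{A}_{i}$ be the ideal class of $\mathfrak{a}_{i}=\bigl(p_{i},\tfrac{p_{i}+2+\sqrt{d}}{2}\bigr)$. By Lemma~\ref{lem4.1}, $\{p_{i}^{r_{i}},\tfrac{p_{i}+2+\sqrt{d}}{2}\}$ and $\{p_{i}^{r_{i}},\tfrac{p_{i}+2-\sqrt{d}}{2}\}$ are integral bases of $\mathfrak{a}_{i}^{r_{i}}$ and $\mathfrak{a}_{i}^{-r_{i}}$ for $1\le r_{i}\le a_{i}$, and feeding these into Lemma~\ref{lem2.1}, Lemma~\ref{lem2.2} and Theorem~\ref{thm2.1}, as in Theorem~\ref{thm4.1}, gives
\[
\zeta_{k_{n}}(-1,\mathcal{A}_{i}^{r_{i}})=\frac{n^{3}+n\bigl(p_{i}^{4r_{i}}+10p_{i}^{2r_{i}}\bigr)}{360p_{i}^{2r_{i}}}=\zeta_{k_{n}}(-1,\mathcal{A}_{i}^{-r_{i}}),\qquad 1\le i\le m,\ 1\le r_{i}\le a_{i}.
\]

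Writing this value as $\tfrac{n}{360}\bigl(p_{i}^{2r_{i}}+10+n^{2}p_{i}^{-2r_{i}}\bigr)$ and $\zeta_{k_{n}}(-1,\mathcal{P})=\tfrac{n}{360}(n^{2}+11)$ by~\eqref{eqn4.2}, elementary algebra shows that $\zeta_{k_{n}}(-1,\mathcal{A}_{i}^{r_{i}})=\zeta_{k_{n}}(-1,\mathcal{P})$ is equivalent to $(p_{i}^{2r_{i}}-1)(p_{i}^{2r_{i}}-n^{2})=0$, i.e.\ to $n=p_{i}^{r_{i}}$, which is impossible because $m\ge 2$; hence each $\mathcal{A}_{i}^{r_{i}}$ with $1\le r_{i}\le a_{i}$ is non-principal. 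Similarly $\zeta_{k_{n}}(-1,\mathcal{A}_{i}^{r_{i}})=\zeta_{k_{n}}(-1,\mathcal{A}_{j}^{\pm s_{j}})$ forces $p_{i}^{2r_{i}}=p_{j}^{2s_{j}}$ or $p_{i}^{2r_{i}}p_{j}^{2s_{j}}=n^{2}$. When $m>2$ the integer $n^{2}$ has at least three prime factors, so the second alternative cannot hold; thus the classes $\mathcal{A}_{i}^{\pm r_{i}}$ ($1\le i\le m$, $1\le r_{i}\le a_{i}$) are pairwise distinct with the single possible exception $\mathcal{A}_{i}^{a_{i}}=\mathcal{A}_{i}^{-a_{i}}$ for each $i$. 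Counting conservatively ($\mathcal{P}$, the $2(a_{i}-1)$ classes $\mathcal{A}_{i}^{\pm r_{i}}$ with $r_{i}<a_{i}$, and one class $\mathcal{A}_{i}^{a_{i}}$ per $i$) gives $h(d)\ge 1+\sum_{i}(2a_{i}-1)=2(a_{1}+\dots+a_{m})-m+1$, which is part~(i). For part~(ii), $m=2$, the equation $p_{1}^{2r_{1}}p_{2}^{2r_{2}}=n^{2}$ holds only for $(r_{1},r_{2})=(a_{1},a_{2})$, so now the four classes $\mathcal{A}_{1}^{\pm a_{1}},\mathcal{A}_{2}^{\pm a_{2}}$ (all sharing one partial zeta value) are the only ones that may further coincide; identifying all of them yields $h(d)\ge 1+2(a_{1}-1)+2(a_{2}-1)+1=2(a_{1}+a_{2})-2$.

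The equality criteria follow from $\zeta_{k_{n}}(-1)=\sum_{\mathfrak{A}}\zeta_{k_{n}}(-1,\mathfrak{A})$ together with the fact that every partial value above is strictly positive: equality in the bound holds iff the class group consists of exactly the conservatively counted classes. In case~(i) this forces $\mathcal{A}_{i}^{a_{i}}=\mathcal{A}_{i}^{-a_{i}}$ for all $i$, and then expanding $\zeta_{k_{n}}(-1)=\zeta_{k_{n}}(-1,\mathcal{P})+\sum_{i}\sum_{r_{i}=1}^{a_{i}-1}2\zeta_{k_{n}}(-1,\mathcal{A}_{i}^{r_{i}})+\sum_{i}\zeta_{k_{n}}(-1,\mathcal{A}_{i}^{a_{i}})$ and substituting the formula (with $\tfrac{2}{360}=\tfrac{1}{180}$) gives the displayed identity; conversely that identity forces $h(d)$ down to the bound. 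Case~(ii) is identical with $\mathcal{A}_{1}^{\pm a_{1}},\mathcal{A}_{2}^{\pm a_{2}}$ all equal, so that only one top term $\tfrac{n^{3}+n(p_{1}^{4a_{1}}+10p_{1}^{2a_{1}})}{360p_{1}^{2a_{1}}}$ survives. For the parenthetical ``only if $m$ is odd'' in~(i): in the equality case the order of $\mathcal{A}_{i}$ is exactly $2a_{i}$, so $\mathcal{A}_{i}^{a_{i}}$ has order $2$; since $N(\varepsilon-1)=-n$, the splitting~\eqref{eqn4.1} gives the principal factorization of $(\varepsilon-1)$ into powers of the conjugate ideals $\mathfrak{a}_{i}^{-1}$, whence $\prod_{i}\mathcal{A}_{i}^{a_{i}}=\mathcal{P}$; since the class group is then the union of the cyclic subgroups $\langle\mathcal{A}_{i}\rangle$ meeting pairwise only in $\mathcal{P}$, the product of any two distinct $\mathcal{A}_{i}^{a_{i}},\mathcal{A}_{j}^{a_{j}}$ is again one of the $\mathcal{A}_{k}^{a_{k}}$, so $\{\mathcal{P},\mathcal{A}_{1}^{a_{1}},\dots,\mathcal{A}_{m}^{a_{m}}\}$ is an elementary abelian $2$-subgroup of order $m+1$, forcing $m+1$ to be a power of $2$.

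The zeta-value computation is routine, being essentially the one already carried out for Theorem~\ref{thm4.1}. The real work is the combinatorial bookkeeping of which coincidences among the classes $\mathcal{A}_{i}^{\pm r_{i}}$ are a priori permitted by the partial zeta values --- in particular the borderline case $m=2$, where the top classes genuinely can all collapse and thereby account for the weaker bound $2(a_{1}+a_{2})-2$ --- together with pinning down the equality conditions, the one mild surprise being the parity restriction $2\nmid m$ in part~(i).
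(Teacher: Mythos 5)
Your proof is correct and follows essentially the route the paper intends: the paper gives no argument for Theorem \ref{thm4.2} beyond ``Using similar arguments one gets,'' deferring to the template of Theorems \ref{thm4.1} and \ref{thm3.2}, and your parallel treatment of the classes $\mathcal{A}_i^{\pm r_i}$, the coincidence analysis via the partial zeta values (including the $m=2$ collapse of $\mathcal{A}_1^{\pm a_1},\mathcal{A}_2^{\pm a_2}$), and the equality criterion are exactly that argument carried out. As a bonus, your $2$-torsion-subgroup justification of the parenthetical ``only if $m$ is odd'' (forcing $m+1$ to be a power of $2$) supplies a reason the paper itself never gives.
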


\noindent{\textit{Remark:}} If all the $a_i$'s and $t$ are zero, then $n=1$ and $d=5$. Hence $h(5)=1$.

\section{\textbf{Remarks on Chowla and Yokoi's conjecture}}
For the sake of completeness, we recall the conjectures again.

\begin{theorem}\label{thm5.1}
If $d=m^2+1$ is a prime with $m>26$, then 
$h(d)>1$.
\end{theorem}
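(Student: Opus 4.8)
The plan is to argue by contradiction and use the lower bounds of Section~3 to pin down the shape of $m$, after which the criterion of Theorem~\ref{thm1.3} finishes the job. So suppose $d=m^2+1$ is prime, $m>26$, and $h(d)=1$. Since $m>26$ the prime $d=m^2+1$ exceeds $2$, hence is odd, so $m$ is even; write $m=2^{s}u$ with $u$ odd and $s\ge 1$. (Note $d$ prime $\Rightarrow d$ square-free, so all the results of Section~3 apply.)

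First I would eliminate every possibility for $(s,u)$ except $m=2p$ with $p$ an odd prime. If $u=1$: when $s\ge 3$ the Remark following Theorem~\ref{thm3.4} gives $h(d)\ge s-1\ge 2$, while $s\le 2$ forces $m\le 4<26$. If $u>1$ and $s\ge 2$: writing $u$ as a prime power or as a product of at least two distinct odd primes, Theorem~\ref{thm3.3} or Theorem~\ref{thm3.4} gives $h(d)\ge 2$. If $u>1$ and $s=1$: when $u=p^{t}$ with $t\ge 2$, Theorem~\ref{thm3.1} gives $h(d)\ge t\ge 2$, and when $u$ has at least two distinct odd prime divisors, Theorem~\ref{thm3.2} gives $h(d)\ge 2$. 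Every one of these conclusions contradicts $h(d)=1$, so the only surviving case is $u=p$ a single odd prime, i.e.\ $m=2p$; and $m>26$ forces $p>13$, hence $p\ge 17$.

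It remains to handle $m=2p$. Since $m^2+1$ is square-free and $h(m^2+1)=1$, Theorem~\ref{thm1.3} forces $p^{2}-t(t+1)$ to be prime for every $t$ with $1\le t\le p-1$; equivalently, the finite set $\{p^{2}-n(n+1)\}_{n=1}^{p-2}$ contains no composite number. This is precisely the reduced form of (C) recorded above, and it fails for $p\ge 17$: by Bir\'{o}'s theorem \cite{B03} (see also \cite{BI03}) that set always contains a composite once $p\notin\{2,3,5,7,13\}$. This contradiction completes the argument.

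The reduction carried out in the first two steps is routine bookkeeping once Section~3 is in hand; the substance of the statement lies entirely in the last step, and the present results do not furnish a new proof of it. Producing a composite among the values $p^{2}-n(n+1)$ for all $p\ge 17$ is, as the reduction shows, equivalent to Chowla's conjecture itself and rests on Bir\'{o}'s analytic work — that is the genuine obstacle. What this section contributes is exactly the passage from (C) to this clean elementary-looking assertion about $\{p^{2}-n(n+1)\}$, the verification of which is delegated to \cite{B03}.
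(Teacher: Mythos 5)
Your argument is correct and follows essentially the paper's own route: the Section~3 lower bounds reduce the class-number-one case to $m=2p$, after which the statement is delegated to Bir\'{o}'s resolution of Chowla's conjecture, exactly as the paper does (your detour through Theorem~\ref{thm1.3} and Lemma~\ref{lem5.1} changes nothing of substance, since Lemma~\ref{lem5.1} is itself only known via Bir\'{o}, and you rightly flag this). One small slip: the relevant citation for the family $4p^2+1$ is \cite{BI03} rather than \cite{B03}.
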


\begin{theorem}\label{thm5.2}
 Let $d=m^2+4$ be a square-free integer for 
some positive integer $m$. Then there exist exactly $6$ real 
quadratic fields $\mathbb{Q}(\sqrt{d})$ of class number one, 
viz.
$m\in\{1,3,5,7,13,17\}$.
\end{theorem}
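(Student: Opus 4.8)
The plan is to obtain Theorem \ref{thm5.2} by feeding the reduction results of Sections 3 and 4 into Bir\'o's theorem, so that the genuinely new work is confined to a transparent finite verification. First I would apply Theorem \ref{thm1.4}: since $d = m^2+4$ is assumed square-free, $h(d) = 1$ forces $m = 1$ or $m = p$ for some prime $p$. The value $m = 1$ gives $d = 5$ and $h(5) = 1$, which accounts for the first field in the list, so the remaining task is to determine exactly which primes $p$ satisfy $h(p^2+4) = 1$.

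For this I would invoke the equivalent criterion of Theorem \ref{thm1.5}: for a prime $p$ with $p^2+4$ square-free, $h(p^2+4) = 1$ if and only if $1 + pt - t^2$ is prime for every integer $t$ with $1 \le t \le \tfrac{p-1}{2}$. For the five primes $p \in \{3,5,7,13,17\}$ this is checked by direct computation; for instance, when $p = 17$ the values $1 + 17t - t^2$ for $t = 1,\dots,8$ are $17,31,43,53,61,67,71,73$, all prime. Together with $d = 5$ this produces the six fields $m \in \{1,3,5,7,13,17\}$ of class number one.

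It then remains to prove that for every prime $p \notin \{3,5,7,13,17\}$ the finite set $\{\, 1 + pt - t^2 : 1 \le t \le \tfrac{p-1}{2}\,\}$ contains a composite number, equivalently that $h(p^2+4) > 1$. For primes up to an explicit bound $B$ this is again a finite check (and many cases are vacuously excluded because $p^2+4$ fails to be square-free, e.g. $p=11$). For $p > B$ one cannot force a composite into the set by elementary sieving; instead I would quote Bir\'o's theorem \cite{BI03}, whose proof produces an exact closed formula for $h(p^2+4)$ and shows it exceeds $1$ once $p$ is sufficiently large, thereby reducing the problem to the range $p \le B$. Chaining the three inputs — Theorem \ref{thm1.4} to restrict $m$, Theorem \ref{thm1.5} plus direct evaluation to pin down the six fields, and Bir\'o's lower bound to eliminate all larger primes — completes the proof.

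The main obstacle is precisely the last step. No combinatorial or sieve-theoretic argument is known that guarantees a composite value of $1 + pt - t^2$ in the stated range for all large $p$, so the depth of the theorem is irreducibly the analytic estimate on $h(p^2+4)$ (equivalently on the special value $\zeta_{k_p}(-1)$) supplied by Bir\'o's work; the contribution of the present paper is to replace the class-number-one hypothesis by the elementary primality condition and to render the finite portion of the argument completely explicit.
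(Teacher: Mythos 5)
Your proposal is logically sound as a deduction, but you should be clear about what it actually establishes. The paper gives no proof of Theorem \ref{thm5.2}: it is Yokoi's conjecture, recalled verbatim (``for the sake of completeness'') as a theorem already proved by Bir\'o, and the only justification the paper ever offers is the citation of \cite{B03}. Your chain --- Theorem \ref{thm1.4} to restrict $m$ to $1$ or a prime, Theorem \ref{thm1.5} plus primality checks to confirm the six listed fields, and Bir\'o to eliminate all remaining primes --- is circular in its decisive step: the assertion that $h(p^2+4)>1$ for every prime $p\notin\{3,5,7,13,17\}$ with $p^2+4$ square-free \emph{is} Theorem \ref{thm5.2} minus a finite verification, and the only tool you supply for it is Bir\'o's theorem at full strength. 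Once that is invoked, the reductions through Theorems \ref{thm1.4} and \ref{thm1.5} contribute nothing to the deduction of Theorem \ref{thm5.2}; their real role in the paper is the converse one, namely to show that Yokoi's conjecture is \emph{equivalent} to the elementary statement of Lemma \ref{lem5.2}. To the extent the paper has an implicit proof of Theorem \ref{thm5.2}, it is exactly ``cite Bir\'o,'' so your route is essentially the paper's with extra (unneeded) scaffolding. Your finite verifications themselves are correct: for $p=17$ the values $17,31,43,53,61,67,71,73$ are all prime, and $p=11$ is indeed excluded since $11^2+4=125$ is not square-free.

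Two factual corrections to the last step. First, the relevant reference is \cite{B03} (Bir\'o's paper on Yokoi's conjecture), not \cite{BI03}, which treats Chowla's conjecture. Second, Bir\'o's proof does not produce ``an exact closed formula for $h(p^2+4)$'' showing it exceeds $1$ for all large $p$; it derives congruence obstructions from special values of partial zeta functions twisted by characters of small conductor, which force any counterexample $d$ below an explicit bound, after which a finite computation finishes the argument. There is therefore no separable ``lower bound valid for $p>B$'' that you can quote independently of the finite search --- you are quoting the whole theorem, which is the statement to be proved.
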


\begin{proof}[Proof of Theorem 1.1]
Assume $d=m^2+1$ to be just square-free in Theorem \eqref{thm5.1}. By Theorem \ref{thm3.1}-\ref{thm3.6}, we deduce that, $h(d)=1$ $\Rightarrow$ $n=1 \text { or } 2p$, for some prime $p$. Hence, if $m\neq 1,2p$, where $p$ is a prime, then $h(m^2+1)>1$.\\
\end{proof}

 Thus one needs only to consider the family $k_p=\mathbb{Q}(\sqrt{4p^2+1})$ in Chowla's conjecture. But this family has been already covered by Bir\'{o} in \cite{BI03}. Hence we have the following theorem;

\begin{theorem}
If $d=m^2+1$ is a square-free integer with $m>26$, then 
$h(d)>1$.
\end{theorem}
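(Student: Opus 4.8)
The plan is to combine the author's structural reduction with the known resolution of Chowla's conjecture for the relevant parametric family. First I would invoke the Proof of Theorem 1.1 already given: assuming only that $d = m^2+1$ is square-free, Theorems \ref{thm3.1}--\ref{thm3.6} force $h(d) = 1$ to imply $m = 1$ or $m = 2p$ for some prime $p$. Thus the only square-free values of $m > 26$ that could possibly yield class number $1$ are those of the form $m = 2p$ with $p$ prime, i.e.\ the field lies in the family $k_p = \mathbb{Q}(\sqrt{4p^2+1})$.

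Next I would dispose of the degenerate cases so that the hypothesis $m > 26$ is genuinely used: the value $m = 1$ gives $d = 2$ with $h(2) = 1$, which is excluded by $m > 26$; and $m = 2p$ with $p \le 13$ gives $m \le 26$, again excluded. So for $m > 26$ square-free with $h(d) = 1$ we must have $m = 2p$ with $p$ a prime satisfying $2p > 26$, i.e.\ $p \ge 14$, hence $p \ge 17$. At this point the claim reduces to showing $h(4p^2+1) > 1$ for every prime $p$ with $4p^2 + 1$ square-free and $2p > 26$.

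Then I would appeal to Bir\'{o}'s theorem \cite{BI03}, which resolves Chowla's conjecture precisely for the family $\mathbb{Q}(\sqrt{4p^2+1})$ (equivalently $\mathbb{Q}(\sqrt{4m^2+1})$): it establishes that $h(4m^2+1) = 1$ only for the finitely many small values of $m$, all of which are well below $26$. Combining this with the reduction of the previous paragraph, no $m > 26$ (square-free $d$) can give $h(d) = 1$, which is exactly the assertion. I would remark that Yokoi's criterion (Theorem \ref{thm1.2}), reformulated here as Theorem \ref{thm1.3}, could alternatively be cited as the description of when the remaining family has class number one, but the cleanest route is the direct citation of Bir\'{o}.

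The main obstacle, such as it is, is not analytic but bookkeeping: one must be careful that the reduction in Theorems \ref{thm3.1}--\ref{thm3.6} genuinely covers \emph{every} square-free $m$ (the various remarks after those theorems handle the boundary cases where all exponents vanish, e.g.\ $d = 2$ and $d = 2^{2s}+1$), so that the implication ``$h(d)=1 \Rightarrow m = 1$ or $2p$'' has no gaps, and that the bound $m > 26$ correctly excludes all the finitely many exceptional fields appearing in Bir\'{o}'s list for the $4p^2+1$ family. Once that is verified, the proof is a one-line synthesis: our reduction plus \cite{BI03}.
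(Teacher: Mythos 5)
Your proposal is correct and follows essentially the same route as the paper: the paper likewise uses the reduction $h(d)=1 \Rightarrow m=1$ or $m=2p$ from Theorems \ref{thm3.1}--\ref{thm3.6} (via the proof of Theorem \ref{thm1.1}) and then cites Bir\'{o}'s resolution of Chowla's conjecture for the family $\mathbb{Q}(\sqrt{4p^2+1})$ to conclude. Your extra bookkeeping about the degenerate cases and the bound $m>26$ is consistent with, and slightly more careful than, the paper's two-line argument.
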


 Now by Proposition \ref{prop2.1},
\begin{eqnarray*}
\zeta_{k_p}(-1)&=&\frac{1}{60}\sum_{\substack{ |t|<\sqrt{4p^2+1}\\ t^2\equiv 4p^2+1\pmod 4}}\sigma\left(\frac{4p^2+1-t^2}{4}
\right)\\
&=&\frac{1}{60}\sum_{\substack{ |t|\leq {2p}\\ t \text { 
\textit{is} } odd}}\sigma\left(\frac{4p^2+1-t^2}{4}\right)\\
&=&\frac{2}{60}\sum_{0 \leq n \leq {p-1}} \sigma
\left(\frac{4p^2+1-(2n+1)^2}{4}\right)\\
&=&\frac{1}{30}\sum_{0 \leq n \leq {p-1}} \sigma(p^2-n(n+1))\\
&\geq &\frac{1}{30}\sum_{0 \leq n \leq {p-1}}\{1+p^2-n(n+1)\}\\
&=& \frac{8p^3+28p}{360}.
\end{eqnarray*}
By \cite[Theorem 2.4]{BK1}, we have $h(\sqrt{4p^2+1})=1$ $\Longleftrightarrow$ $\zeta_{k_p}=\frac{8p^3+28p}{360}$. In the above inequality, 
equality occurs only if $\{p^2-n(n+1)\}_{n=1}^{p-2}$ 
consists of only prime numbers.
Thus, one can have an alternate proof of Chowla's conjecture if the following lemma can be proved:

\begin{lemma}\label{lem5.1}
The set $\{p^2-n(n+1)\}_{n=1}^{p-2}$ always contains a composite number, except for $p = 2,3,5,7 \text { and } 13$.
\end{lemma}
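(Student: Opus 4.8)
The plan is to recast Lemma \ref{lem5.1} as the assertion ``$h(4p^2+1)>1$ for every prime $p\notin\{2,3,5,7,13\}$'', and then to split the argument according to whether $4p^2+1$ is prime: the composite case is entirely elementary, while the prime case is Chowla's conjecture itself. The equivalence with the class number is already visible in the display preceding the lemma: by \cite[Theorem 2.4]{BK1}, $h(4p^2+1)=1$ if and only if $\zeta_{k_p}(-1)=\frac{8p^3+28p}{360}$, and in the expansion $\zeta_{k_p}(-1)=\frac{1}{30}\sum_{0\le n\le p-1}\sigma\bigl(p^2-n(n+1)\bigr)$ the terms $n=0$ and $n=p-1$ are the exact quantities $\sigma(p^2)=1+p+p^2$ and $\sigma(p)=1+p$, so equality with $\frac{8p^3+28p}{360}$ is forced precisely when $\sigma\bigl(p^2-n(n+1)\bigr)=1+p^2-n(n+1)$ for every $1\le n\le p-2$, i.e.\ (since each such integer is $\ge 3p-2>1$) precisely when every member of $\{p^2-n(n+1)\}_{n=1}^{p-2}$ is prime. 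This is also Yokoi's criterion, Theorem \ref{thm1.2}, with the value $t=p-1$ (which always gives the prime $p$) discarded.

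I would first handle, for all primes $p$ simultaneously, the case in which $4p^2+1$ is composite. Let $q$ be its least prime factor; then $q$ is odd and $q\le\sqrt{4p^2+1}<2p+1$, and $q\neq 2p-1$, since $q=2p-1$ would force $q\mid(4p^2+1)-(2p-1)^2=4p$, impossible because $\gcd(2p-1,4p)=1$. Hence $q\le 2p-3$, so $n_0:=(q-1)/2$ lies in $\{1,\dots,p-2\}$; from $4\bigl(p^2-n_0(n_0+1)\bigr)=(4p^2+1)-(2n_0+1)^2=(4p^2+1)-q^2$ and $q\mid 4p^2+1$ one gets $q\mid p^2-n_0(n_0+1)$, while $p^2-n_0(n_0+1)\ge 3p-2>2p-3\ge q>0$. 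Thus $p^2-n_0(n_0+1)$ is a positive multiple of $q$ strictly larger than $q$, hence composite, and the lemma holds for this $p$. (This also re-proves, for $d=4p^2+1$, that $h(d)=1$ forces $d$ to be prime; cf.\ Theorem \ref{thm1.1}.)

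It then remains to treat primes $p$ for which $4p^2+1$ is prime. Writing $m=2p$, the integer $m^2+1$ is prime; if $m>26$, i.e.\ $p\ge 17$, then Chowla's conjecture --- Theorem \ref{thm5.1}, proved by Bir\'o \cite{BI03} --- gives $h(4p^2+1)>1$, so the set contains a composite by the equivalence above. If $m\le 26$ then $p\in\{2,3,5,7,11,13\}$, and $4p^2+1$ is prime only for $p\in\{2,3,5,7,13\}$ (for $p=11$, $4p^2+1=485=5\cdot 97$, already covered by the composite case); for each of these five primes a direct computation shows every $p^2-n(n+1)$ with $1\le n\le p-2$ is prime --- for $p=13$ one obtains $167,163,157,149,139,127,113,97,79,59,37$, and for $p=2$ the set is empty --- so these are exactly the exceptions, completing the argument.

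The hard part is this last subcase, $4p^2+1$ prime with $p\ge 17$: the only route I can see passes through Bir\'o's analytically deep solution of Chowla's conjecture, so the argument does not yield a genuinely self-contained ``alternate'' proof. An elementary demonstration that $\{p^2-n(n+1)\}_{n=1}^{p-2}$ must contain a composite for those $p$ would itself be a new proof of Chowla's conjecture; in this sense the lemma is exactly as hard as the conjecture it reformulates, and the real content here is the reformulation together with the elementary reduction to the case ``$4p^2+1$ prime'', rather than a shortcut past Bir\'o's theorem.
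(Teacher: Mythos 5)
Your proof is correct, and at its core it follows the same route as the paper: the lemma is recast, via the $\zeta_{k_p}(-1)$ computation and \cite[Theorem 2.4]{BK1}, as the assertion $h(4p^2+1)>1$ for $p\notin\{2,3,5,7,13\}$, and the essential case is discharged by citing Bir\'o's solution of Chowla's conjecture \cite{BI03}. The paper offers nothing beyond this --- it records the equivalence, remarks that the family $\mathbb{Q}(\sqrt{4p^2+1})$ ``has been already covered by Bir\'o'', and concedes that an independent proof of the lemma would constitute a new proof of the conjecture, which is exactly the caveat you close with. Where you genuinely add something is the case $4p^2+1$ composite: taking the least prime factor $q$, showing $q\le 2p-3$, and setting $n_0=(q-1)/2$ exhibits a composite member $p^2-n_0(n_0+1)$ of the set by pure elementary arithmetic, with no class-number input at all; in particular this covers primes $p$ for which $4p^2+1$ fails to be square-free (e.g.\ $p=41$, where $4p^2+1=5^2\cdot 269$), a situation in which the paper's standing square-freeness hypothesis and its appeal to \cite{BK1} do not literally apply. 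You also correctly flag that the displayed step $\zeta_{k_p}(-1)\ge\frac{1}{30}\sum_{0\le n\le p-1}\{1+p^2-n(n+1)\}$ gives only $\frac{8p^3+16p}{360}$ as written, and that the stated bound $\frac{8p^3+28p}{360}$ requires keeping the exact values $\sigma(p^2)=1+p+p^2$ and $\sigma(p)=1+p$ at $n=0$ and $n=p-1$; this is precisely what confines the equality condition to $1\le n\le p-2$. Finally, the explicit check for $p\in\{2,3,5,7,13\}$ (and the observation that $p=11$ is absorbed by the composite case) is needed for the ``exactly these exceptions'' half of the statement and is absent from the paper. In short: same skeleton, but your decomposition is more careful and the composite subcase is a genuine elementary improvement.
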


That is, Chowla's conjecture is equivalent to above lemma and  hence Theorem \eqref{thm1.3} is proved.

\begin{proof}[Proof of Theorem 1.4]
Consider $m^2+4$ to be a square-free integer, then
by Theorem \ref{thm4.1}-\ref{thm4.2}, $h(d)=1$ $\Rightarrow$ $m=1,p$, for some prime $p$.
\end{proof}

 So we only have to consider the family $k_p=
\mathbb{Q}(\sqrt{p^2+4})$ to prove Yokoi's conjecture. Again by 
Proposition \ref{prop2.1}, we have
\begin{eqnarray*}
\zeta_{k_p}(-1)&=&\frac{1}{60}\sum_{\substack{ |t|<\sqrt{p^2+4}\\ t^2\equiv p^2+4\pmod 4}}\sigma\left(\frac{p^2+4-t^2}{4}\right)
\\
&=&\frac{1}{60}\sum_{\substack{ |t|\leq {p}\\ t \text { 
\textit{is} } odd}}\sigma\left(1+\frac{p^2-t^2}{4}\right)\\
&=&\frac{2}{60}\sum_{\substack{ t\leq {p}\\ t \text { 
\textit{is} } odd}}\sigma\left(1+\frac{(p-t)(p+t)}{4}\right).
\end{eqnarray*}

If we replace $t$ by $p-k$, where k is even, then

\begin{equation*}
\zeta_{k_p}(-1)=\frac{1}{30}\sum_{\substack{ 0 \leq k\leq {p-1}\\ k \text { \textit{is} } even}}\sigma\left(1+\frac{(k)(2p-k)}
{4}\right).
\end{equation*}
 Now replace $k$ by $2n$, then we have $0 \leq n \leq \frac{p-1}
 {2}$. Thus
 
\begin{eqnarray*}
\zeta_{k_p}(-1)&=&\frac{1}{30}\sum_{ 0 \leq n \leq {(p-1)/2}}
\sigma\left(1+\frac{(2n)(2p-2n)}{4}\right)\\
&=&\frac{1}{30}\sum_{ 0 \leq n \leq {(p-1)/2}}\sigma(1+pn-n^2)\\
&\geq &\frac{1}{30}\sum_{0 \leq n \leq {(p-1)/2}}\{1+pn-n^2\}\\
&=& \frac{p^3+11p}{360}.
\end{eqnarray*}

Now by \cite[Theorem 2.4]{BK1}, we have $h(\sqrt{p^2+4})=1$ $\Longleftrightarrow$ $\zeta_{k_p}=\frac{p^3+11p}{360}$. From above, equality occurs only if the set $\{1+pn-n^2\}_{n=1}
^{\frac{p-1}{2}}$ consist of only prime numbers. Since Yokoi's conjecture has been proved by Bir\'{o} in \cite{B03}, therefore Lemma \eqref{lem5.2} is true. But if one can prove this lemma by some other way then we will have a different prove of Yokoi's conjecture.

\begin{lemma}\label{lem5.2}
The set $\{1+pm-m^2\}_{m=1}^{\frac{p-1}{2}}$ always contains a composite number, except for $p = 3,5,7,13 \text { and } 17$.
\end{lemma}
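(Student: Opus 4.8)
The plan is to obtain Lemma~\ref{lem5.2} directly from the equivalence assembled just above, combined with Yokoi's conjecture in the form of Theorem~\ref{thm5.2} (Bir\'{o} \cite{B03}). The starting point is the identity derived above from Siegel's formula (Proposition~\ref{prop2.1}) by the substitution $t=p-2n$, namely
$$\zeta_{k_p}(-1)=\frac{1}{30}\sum_{0\le n\le (p-1)/2}\sigma\bigl(1+pn-n^2\bigr),$$
valid for every odd prime $p$ with $p^2+4$ square-free; in the Yokoi setting $p$ is automatically odd, since $d=n^2+4\equiv 5\pmod 8$ forces $n$ odd.

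The first substantive step is to upgrade the bound $\zeta_{k_p}(-1)\ge \frac{p^3+11p}{360}$ to an equality criterion. For $1\le n\le (p-1)/2$ one has $1+pn-n^2=1+n(p-n)\ge 1+(p-1)=p\ge 3$, so each such term is $\ge 2$, while the term $n=0$ contributes $\sigma(1)=1$. Since $\sigma(N)\ge N+1$ for every integer $N\ge 2$, with equality exactly when $N$ is prime, summing gives
$$\zeta_{k_p}(-1)\ \ge\ \frac{1}{30}\Bigl(1+\sum_{n=1}^{(p-1)/2}\bigl(2+pn-n^2\bigr)\Bigr)\ =\ \frac{p^3+11p}{360},$$
and equality holds if and only if $1+pn-n^2$ is prime for every $1\le n\le (p-1)/2$. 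Inserting this into the Byeon--Kim criterion \cite[Theorem~2.4]{BK1}, $h\bigl(\mathbb{Q}(\sqrt{p^2+4})\bigr)=1\iff \zeta_{k_p}(-1)=\frac{p^3+11p}{360}$, yields
$$h\bigl(\mathbb{Q}(\sqrt{p^2+4})\bigr)=1\ \Longleftrightarrow\ \{1+pn-n^2\}_{n=1}^{(p-1)/2}\text{ contains no composite number.}$$

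Next I would invoke Theorem~\ref{thm5.2}: among square-free integers $d=p^2+4$ with $p$ prime, $h(d)=1$ holds precisely for $p\in\{3,5,7,13,17\}$ (the prime members of Yokoi's list $\{1,3,5,7,13,17\}$). For each of these five primes $p^2+4\in\{13,29,53,173,293\}$ is itself prime, hence square-free, so the criterion above applies and shows the corresponding set $\{1+pn-n^2\}_{n=1}^{(p-1)/2}$ contains no composite. Conversely, for any odd prime $p\notin\{3,5,7,13,17\}$ with $p^2+4$ square-free, Theorem~\ref{thm5.2} gives $h\bigl(\mathbb{Q}(\sqrt{p^2+4})\bigr)>1$, hence $\zeta_{k_p}(-1)>\frac{p^3+11p}{360}$, and therefore at least one $1+pn-n^2$ with $1\le n\le (p-1)/2$ is composite. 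This settles the lemma whenever $p^2+4$ is square-free, which is the only case relevant to Yokoi's conjecture.

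The residual case, $p^2+4$ not square-free, can be disposed of by an elementary divisibility argument requiring no new idea, which I would include for completeness: if a prime $q$ satisfies $q^2\mid p^2+4$ then $q$ is odd, $q\ne p$, and $q^2\le p^2+4<(p+1)^2$ forces $q\le p-2$; since $q\mid p^2+4$, the congruence $1+pn-n^2\equiv 0\pmod q$ has a solution, which can be taken in $[1,p-1]$ because $p-1\ge q$, and then, using the identity $1+pn-n^2=1+p(p-n)-(p-n)^2$, even in $[1,(p-1)/2]$; for such $n_0$ the value $1+pn_0-n_0^2\ge p>q$ is a composite lying in the set. The genuine obstacle lies not in any of these steps but in the logical status of the lemma: as presented it is only \emph{equivalent} to Bir\'{o}'s theorem, so the hard part would be an unconditional proof --- a direct, elementary demonstration that $\{1+pn-n^2\}_{n=1}^{(p-1)/2}$ must contain a composite for $p\notin\{3,5,7,13,17\}$ would itself give a new proof of Yokoi's conjecture, which is exactly what remains out of reach.
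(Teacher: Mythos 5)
Your proposal is correct and follows essentially the same route as the paper: derive $\zeta_{k_p}(-1)\ge\frac{p^3+11p}{360}$ from Proposition \ref{prop2.1} with equality precisely when every $1+pn-n^2$ is prime, combine with the Byeon--Kim criterion, and then invoke Bir\'{o}'s proof of Yokoi's conjecture. You additionally tighten two points the paper glosses over --- the correct bookkeeping $\sigma(N)\ge N+1$ for $n\ge 1$ versus $\sigma(1)=1$ for $n=0$ (the paper's displayed intermediate sum is off by exactly this), and the case where $p^2+4$ fails to be square-free --- but these are refinements of the same argument, not a different one.
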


Hence Theorem \eqref{thm1.5} is proved.\\

Now let $g(n)$ be the least prime number which is a quadratic residue modulo  
$n$. Chowla and Friedlander \cite{CF76} proved that if $p=m^2+1$  is a prime, 
$m>2$ and $h(p)=1$, then  $g(p)=\frac{m}{2}$. Using Lemma 
\eqref{lem5.1} we will get some upper bound for $g(4p^2+1)$, if $h(4p^2+1)>1$, irrespective of $(4p^2+1)$ is a prime or not.

\begin{theorem}\label{thm5.3}
Let $d=4p^2+1$ be a square-free integer, where $p$ is a prime. 
Consider $\mathbb{Q}(\sqrt{d})$ then $g(d)<p$, 
except for $p=2,3,5,7 \text { and } 13$.
\end{theorem}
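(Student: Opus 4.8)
The plan is to exploit the chain of implications already set up in this section: if $h(d) = h(4p^2+1) > 1$, then by \cite[Theorem 2.4]{BK1} the zeta value $\zeta_{k_p}(-1)$ is strictly larger than $\frac{8p^3+28p}{360}$, and by the computation preceding Lemma \ref{lem5.1} this strict inequality forces the set $\{p^2 - n(n+1)\}_{n=1}^{p-2}$ to contain at least one \emph{composite} number. So the first step is simply to record that $h(d) > 1$ implies $p^2 - n_0(n_0+1)$ is composite for some $n_0$ with $1 \leq n_0 \leq p-2$.

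Next I would convert compositeness of $p^2 - n_0(n_0+1)$ into information about a small quadratic residue of $d = 4p^2+1$. The key algebraic identity is that $4\bigl(p^2 - n(n+1)\bigr) = 4p^2 - 4n^2 - 4n = (4p^2+1) - (2n+1)^2 = d - (2n+1)^2$, so for every $n$ in range we have the congruence $(2n+1)^2 \equiv d - 4\bigl(p^2-n(n+1)\bigr) \pmod{\text{anything}}$, and in particular any prime divisor $q$ of $p^2 - n_0(n_0+1)$ satisfies $(2n_0+1)^2 \equiv d \pmod q$, i.e. $d$ is a quadratic residue mod $q$. Since $p^2 - n_0(n_0+1)$ is composite and positive (it is $\geq 1$, and for $1\leq n_0\leq p-2$ one checks it is $\geq p+1 > 1$), it has a prime factor $q$ with $q \leq \sqrt{p^2 - n_0(n_0+1)} < p$. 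Hence $g(d) \leq q < p$, which is exactly the claimed bound. The exceptional set $p \in \{2,3,5,7,13\}$ is inherited verbatim from Lemma \ref{lem5.1}: these are precisely the primes for which the set $\{p^2-n(n+1)\}$ may consist entirely of primes, so the argument producing a composite factor is unavailable.

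One subtlety to handle carefully is the range of $n$ and the positivity/size of $p^2 - n(n+1)$: I would note that on $1 \leq n \leq p-2$ the quantity $p^2 - n(n+1)$ is minimized near the right endpoint, where it equals $p^2 - (p-2)(p-1) = 3p - 2 > 1$, so it is genuinely composite (not a unit), and more importantly that a composite integer $N$ has a prime factor at most $\sqrt{N}$; here $\sqrt{p^2 - n_0(n_0+1)} \leq \sqrt{p^2 - 2} < p$, giving the strict inequality $g(d) < p$. A second point worth a sentence is that $d = 4p^2+1$ being square-free (hence the field is well-defined) plays no role beyond guaranteeing we are in the setting of \cite[Theorem 2.4]{BK1}; the quadratic-residue argument mod $q$ is purely elementary.

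The main obstacle — really the only one — is making sure the logical direction is airtight: \cite[Theorem 2.4]{BK1} gives $h(d)=1 \iff \zeta_{k_p}(-1) = \frac{8p^3+28p}{360}$, and the displayed inequality shows $\zeta_{k_p}(-1) \geq \frac{8p^3+28p}{360}$ always, with equality exactly when every $p^2-n(n+1)$ is prime; so $h(d) > 1$ is equivalent to the existence of a composite value, and from there the factor extraction is immediate. I do not anticipate any genuinely hard step — the theorem is essentially a repackaging of Lemma \ref{lem5.1} together with the identity $d - (2n+1)^2 = 4\bigl(p^2 - n(n+1)\bigr)$ — but the bookkeeping of which primes are excepted and why must be stated precisely.
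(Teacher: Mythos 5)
Your argument follows the paper's proof almost step for step: invoke Lemma \ref{lem5.1} (equivalently, the chain $h(d)>1 \Rightarrow \zeta_{k_p}(-1) > \frac{8p^3+28p}{360} \Rightarrow$ some $p^2-n_0(n_0+1)$ is composite), extract a prime factor $q<p$ of that composite value, and use the identity $d-(2n_0+1)^2 = 4\bigl(p^2-n_0(n_0+1)\bigr)$ to conclude that $d$ is a square modulo $q$. Up to that point everything matches; your bound $q\le\sqrt{p^2-n_0(n_0+1)}<p$ is the same observation the paper makes in the form ``a composite number less than $p^2$ has a prime factor less than $p$,'' and your positivity check $p^2-(p-2)(p-1)=3p-2>1$ is a worthwhile extra sentence.

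However, the final step has a gap in the direction of the quadratic-residue relation. What you have established is that $d$ is a quadratic residue modulo $q$; what $g(d)<p$ requires is that $q$ is a quadratic residue modulo $d$, since $g(n)$ is defined as the least prime that is a quadratic residue \emph{of} $n$. These are not the same statement, and the paper explicitly bridges them with quadratic reciprocity: because $d=4p^2+1\equiv 1\pmod 4$, one has $\left(\frac{d}{q}\right)=\left(\frac{q}{d}\right)$, so $d$ being a square modulo $q$ transfers to $q$ being a residue of $d$. Your write-up jumps from ``$d$ is a quadratic residue mod $q$'' directly to ``$g(d)\le q$'' without this reciprocity step. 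It is a one-line fix, but it is precisely the point at which the congruence $d\equiv 1\pmod 4$ enters the argument, so it must be stated.
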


\begin{proof}
By Lemma \eqref{lem5.1}, $\forall \hspace*{2mm} p$, except $p = 
2,3,5,7 \text { and } 13$, there exist a $1 \leq n_0 \leq p-2$ 
such that $p^2-n_0(n_0 +1)$ is not a prime number. Since $p^2-
n_0(n_0 +1)<p^2$, therefore there exist $q<p$ such that $q|(p^2-
n_0(n_0 +1))$. So we have 
$$p^2-n_0(n_0 +1) \equiv 0 \pmod q.$$
This implies $p^2-n(n +1)$ has a solution $n_0$ in $
\mathbb{F}_q$ and $$n_0=\frac{-1\pm \sqrt{1+4p^2}}{2}.$$
Therefore, $1+4p^2$ is 
a square in $\mathbb{F}_q$, i.e. $1+4p^2=x^2$ in $\mathbb{F}_q
$, for some $x \in \mathbb{F}_q$ . Thus $1+4p^2$ is a quadratic residue of some prime 
$q<p$. Hence $q$ is also a quadratic 
residue of $1+4p^2$, since $1+4p^2 \equiv 1 \pmod 4$.
\end{proof}

Similarly, using Lemma \eqref{lem5.2} one can deduce:
\begin{theorem}\label{thm5.4}
Let $d=p^2+4$ be a square-free integer, where $p$ is a prime. 
Consider $\mathbb{Q}(\sqrt{d})$ then $g(d)<p$, 
except for $p=3,5,7,13 \text { and } 17$.
\end{theorem}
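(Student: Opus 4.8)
The plan is to run the argument used for Theorem \ref{thm5.3} verbatim, with Lemma \eqref{lem5.2} in place of Lemma \eqref{lem5.1} and the polynomial $p^{2}-n(n+1)$ replaced by $1+pn-n^{2}$; the point is that the discriminant of $1+pn-n^{2}$ is exactly $p^{2}+4=d$, which is what drives the argument.

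First I would fix a prime $p\notin\{3,5,7,13,17\}$ and apply Lemma \eqref{lem5.2} to obtain an integer $m_{0}$ with $1\le m_{0}\le\frac{p-1}{2}$ for which $N:=1+pm_{0}-m_{0}^{2}$ is composite. Since $0<m_{0}<p$ we get $N=1+m_{0}(p-m_{0})>0$ and $N\le 1+\frac{p^{2}}{4}<p^{2}$, so the least prime divisor $q$ of $N$ obeys $q\le\sqrt{N}<p$. Now $N\equiv 0\pmod q$ means that $m^{2}-pm-1\equiv 0\pmod q$ has the root $m\equiv m_{0}$; reducing mod $2$ shows $N$ is odd (because $p$ is odd), hence $q$ is an odd prime, and completing the square gives $(2m_{0}-p)^{2}\equiv p^{2}+4=d\pmod q$. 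Thus $d$ is a quadratic residue modulo $q$. Finally, because $d\equiv 5\equiv 1\pmod 4$, quadratic reciprocity for the Jacobi symbol yields $\left(\frac{q}{d}\right)=\left(\frac{d}{q}\right)=1$, so $q$ is a quadratic residue of $d$, whence $g(d)\le q<p$. The five excluded primes are precisely the exceptions appearing in Lemma \eqref{lem5.2}.

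I expect the only delicate step to be the transfer from ``$d$ is a quadratic residue modulo $q$'' to ``$q$ is a quadratic residue of $d$'': since $d$ need not be prime, reciprocity must be stated through the Jacobi symbol and the congruence $d\equiv 1\pmod 4$ is what removes the sign ambiguity, and one should check that the degenerate case $q\mid d$ causes no trouble --- here one uses that $d$ is square-free, exactly as in the proof of Theorem \ref{thm5.3}. The remaining ingredients --- the bound $N<p^{2}$, the extraction of a prime factor $q<p$, and the discriminant identity --- are routine.
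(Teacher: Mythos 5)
Your proposal matches the paper's intended argument exactly: the paper proves Theorem \ref{thm5.4} by remarking that one runs the proof of Theorem \ref{thm5.3} with Lemma \ref{lem5.2} in place of Lemma \ref{lem5.1}, using that the discriminant of $m^2-pm-1$ is $p^2+4=d$, and that $d\equiv 1\pmod 4$ lets reciprocity transfer ``$d$ is a square mod $q$'' to ``$q$ is a quadratic residue of $d$.'' Your write-up is in fact slightly more careful than the paper's (checking $N$ is odd, invoking the Jacobi symbol explicitly, and flagging the degenerate case $q\mid d$, which the paper's proof of Theorem \ref{thm5.3} passes over silently), so it is correct and takes essentially the same route.
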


\section{\textbf{Class group of prime power order}}
In this section, we deduce some conditions on the 
exponents of prime factors of $n$ so that the class group of  prime power order is cyclic.  

\begin{theorem}\label{thm6.1}
Let $n=2p^t$ with $p$ an odd prime, $t \geq 1$ an integer, 
$d=n^2+1\equiv 5 \pmod 8$. Let $h(d)=q^r$ for some prime $q$ and positive 
integer $r\geq 2.$ If $t>q^{r-1}$, then $\mathfrak{C}(k_n)\cong \mathbb{Z}_4$.
\end{theorem}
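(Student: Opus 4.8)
The plan is to combine the lower bound from Theorem \ref{thm3.1} with the structural constraints that a cyclic-or-small class group imposes. By Theorem \ref{thm3.1} we already know $h(d)\geq t$, so the hypothesis $h(d)=q^r$ forces $t\leq q^r$. The key observation is that the proof of Theorem \ref{thm3.1} produces an ideal class $\mathcal{A}$ (containing $\mathfrak{a}=(p,\tfrac{1+\sqrt d}{2})$) whose order is at least $t$: indeed $\mathcal{A}^r$ was shown to be non-principal for $1\leq r<t$. So first I would record that $|\mathcal{A}|$ divides $h(d)=q^r$ and $|\mathcal{A}|\geq t>q^{r-1}$; since the only divisor of $q^r$ exceeding $q^{r-1}$ is $q^r$ itself, we get $|\mathcal{A}|=q^r=h(d)$. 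Hence $\mathfrak{C}(k_n)=\langle \mathcal{A}\rangle$ is cyclic of order $q^r$.

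Next I would pin down $q$ and $r$. Since $d=n^2+1\equiv 5\pmod 8$ with $n=2p^t$, genus theory applies: the number of ramified primes determines the $2$-rank of the narrow (hence here the ordinary, since $N(\varepsilon)=-1$) class group. For $d$ of this shape the discriminant is $4d$ and the ramified primes are exactly $2$ together with the odd primes dividing $d=4p^{2t}+1$; in particular $p\nmid d$, so the relevant count is governed by the prime factorization of $n^2+1$. The point to extract is that the $2$-rank of $\mathfrak{C}(k_n)$ is at least $1$ but, because the group is cyclic, equals exactly $1$, which forces $q=2$. Then $h(d)=2^r$ and I must show $r=2$. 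Here I would invoke Theorem \ref{thm1.1} (Mollin's result, reproved in the excerpt): the form $n=2p^t$ with $t\geq 2$ would make $n\neq 1,2q'$ for a prime $q'$ unless $t=1$; wait—more carefully, $n=2p^t$ is of the form $2q'$ only when $t=1$. So if $t\geq 2$ the genus-theoretic lower bound combined with the splitting of $p$ forces $2^r=h(d)\geq 4$, i.e. $r\geq 2$, consistent with the hypothesis; and the hypothesis $t>q^{r-1}=2^{r-1}$ together with $t\leq 2^r$ leaves $t\in(2^{r-1},2^r]$. To close the gap to $r=2$ I would use the finer count: the ideal class $\mathcal B$ above the split prime $2$ (from \eqref{eqn3.3}) also lies in $\langle\mathcal A\rangle$, and the partial zeta computations of Theorem \ref{thm3.3}'s proof show the subgroup generated by $\mathcal A$ and $\mathcal B$ already has order at least $2(t+1)-$small corrections; reconciling $2(t+\cdots)$-type lower bounds with $h(d)=2^r$ and $t>2^{r-1}$ pushes $r$ down to $2$, whence $h(d)=4$ and $\mathfrak C(k_n)\cong\mathbb Z_4$.

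The main obstacle is exactly this last reconciliation: one must be careful that the bound $h(d)\geq t$ from Theorem \ref{thm3.1} is not by itself enough to force $r=2$ (it only gives $t\leq 2^r$, compatible with large $r$), so the genus-theoretic input "cyclic $\Rightarrow$ $2$-rank one $\Rightarrow q=2$" is essential, and then one needs the sharper lower bound coming from simultaneously using the class of $p$ and the class of $2$ (as in Theorem \ref{thm3.3}) to eliminate $r\geq 3$. I would therefore structure the write-up as: (1) $|\mathcal A|=h(d)$, so the group is cyclic; (2) genus theory $\Rightarrow q=2$; (3) the refined lower bound from the two split primes, together with $t>2^{r-1}$, forces $r=2$; (4) conclude $\mathfrak C(k_n)\cong\mathbb Z_4$. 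Steps (1) and (4) are immediate; step (2) is standard; step (3) is where the real content sits and where I would spend the most care checking the inequalities.
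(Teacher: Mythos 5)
Your first paragraph is exactly the paper's proof, and it is the \emph{entire} proof the paper gives: by Theorem \ref{thm3.1} the class $\mathcal{A}$ of $\mathfrak{a}=(p,\tfrac{1+\sqrt d}{2})$ has order at least $t>q^{r-1}$, its order divides $h(d)=q^{r}$, hence $|\mathcal{A}|=q^{r}=h(d)$ and the class group is cyclic. The paper concludes only that $\mathfrak{C}(k_n)$ is cyclic, i.e.\ $\mathfrak{C}(k_n)\cong\mathbb{Z}_{q^{r}}$; the ``$\mathbb{Z}_4$'' in the statement is evidently a typographical slip for $\mathbb{Z}_{q^r}$ (compare Theorems \ref{thm6.2}--\ref{thm6.5}, all of which conclude $\mathbb{Z}_{q^r}$). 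So the correct and intended content of the theorem is already finished at the end of your first paragraph.

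Your attempt to literally pin down $q=2$ and $r=2$ does not work, and the specific facts you lean on are false in this case. Since $d=n^2+1\equiv 5\pmod 8$ is square-free, $d\equiv 1\pmod 4$, so the discriminant is $d$ itself (not $4d$) and $2$ is \emph{inert}, not ramified and not split; the splitting \eqref{eqn3.3} and the class $\mathcal{B}$ above $2$ exist only in the case $d\equiv 1\pmod 8$ treated in Theorem \ref{thm3.3}, so there is no second split prime to play against $p$ here. Moreover genus theory gives $2$-rank equal to one less than the number of ramified primes, i.e.\ one less than the number of prime divisors of $d=4p^{2t}+1$, which can perfectly well be $1$ (e.g.\ $d$ prime), making the $2$-rank zero and $h(d)$ odd; so $q=2$ cannot be deduced from the hypotheses, and a fortiori neither can $r=2$. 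You should drop steps (2) and (3), keep step (1), and state the conclusion as $\mathfrak{C}(k_n)\cong\mathbb{Z}_{q^r}$.
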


\begin{proof}
By theorem \eqref{thm3.1}, we have $h(d)\geq t$ and if $\mathcal{A}$ is an ideal class containing $\mathfrak{a}=
\left(p,\frac{1+\sqrt{d}}{2}\right)$, then $\mathcal{A}$ is non-
principal, $|\mathcal{A}|>q^{r-1}$ and $|\mathcal{A}|\big|q^r$. 
This implies $|\mathcal{A}|=q^r$ and hence class group is 
cyclic.
\end{proof}

The following results can also be proved using theorems
in $\S3$ and $\S4$ and some group theoretic arguments.
\begin{theorem}\label{thm6.2}
Let $d=n^2+1\equiv 5 \pmod 8$. If 
$h(d)=q^r$ for some prime $q$ and positive integer $r\geq 2$, 
then,\\
\textbf{I.} Let $n=2{p_{1}}^{a_{1}}{p_{2}}^{a_{2}}\cdots{p_{m}}
^{a_{m}}$ with $p_{i}$'s be distinct odd primes and  $m>2$. If 
one of the following holds:
\begin{itemize}
\item[(i)] $a_{i}\geq \frac{q^{r-1}+1}{2}$ for any $1 \leq i 
\leq m$.
\item[(ii)] $a_i \geq \frac{q^{r-j}+1}{2}$ and $a_l\geq  
\frac{q^{j-1}+1}{2}$ for some $i \neq l$ and $2 \leq j <r$.
\end{itemize} 
Then $\mathfrak{C}(k_n)\cong \mathbb{Z}_{q^r}$.\\
\textbf{II.}  Let $n=2{p_{1}}^{a_{1}}{p_{2}}^{a_{2}}$ with 
$p_{1},p_{2}$ be distinct odd primes. If $a_{1} \text{ or } 
a_{2} \geq \frac{q^{r-1}+1}{2}$, then $\mathfrak{C}(k_n)\cong \mathbb{Z}_{q^r}$.
\end{theorem}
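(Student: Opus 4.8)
The plan is to mimic the argument of Theorem \ref{thm6.1}, now working inside the class groups produced by Theorem \ref{thm3.2} rather than Theorem \ref{thm3.1}. In both parts we know $h(d)=q^r$, so every element of $\mathfrak{C}(k_n)$ has order a power of $q$, and $\mathfrak{C}(k_n)$ is an abelian $q$-group of order $q^r$. To conclude that it is cyclic it suffices to exhibit a single element of order $q^r$; equivalently, by the structure theorem for finite abelian groups, it suffices to show that $\mathfrak{C}(k_n)$ cannot be written as a product of two nontrivial subgroups, i.e. that it has a unique subgroup of order $q$. I would phrase the whole argument in terms of producing elements of large order from the ideal classes $\mathcal{A}_i$ supplied by Theorem \ref{thm3.2}.

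For Part \textbf{II} (and the single-condition case (i) of Part \textbf{I}), the argument is essentially identical to Theorem \ref{thm6.1}. By Theorem \ref{thm3.2}(ii) (resp. (i)) the class $\mathcal{A}_i$ containing $\mathfrak{a}_i=(p_i,\tfrac{1+\sqrt d}{2})$ satisfies $|\mathcal{A}_i|\geq 2a_i$, and the order of $\mathcal{A}_i$ divides $h(d)=q^r$, hence is a power of $q$. If $a_i\geq \tfrac{q^{r-1}+1}{2}$ then $2a_i\geq q^{r-1}+1>q^{r-1}$, so $|\mathcal{A}_i|$ is a power of $q$ exceeding $q^{r-1}$ and dividing $q^r$; therefore $|\mathcal{A}_i|=q^r$, $\mathcal{A}_i$ generates $\mathfrak{C}(k_n)$, and $\mathfrak{C}(k_n)\cong\mathbb{Z}_{q^r}$. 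This settles Part \textbf{II} verbatim and case (i) of Part \textbf{I}.

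For case (ii) of Part \textbf{I} the same counting gives $|\mathcal{A}_i|\geq 2a_i\geq q^{r-j}+1$ and $|\mathcal{A}_l|\geq 2a_l\geq q^{j-1}+1$, so $|\mathcal{A}_i|$ is a power of $q$ strictly larger than $q^{r-j}$ and hence at least $q^{r-j+1}$, while $|\mathcal{A}_l|\geq q^j$. Now I would argue by contradiction: if $\mathfrak{C}(k_n)$ were not cyclic, then as an abelian $q$-group of order $q^r$ its exponent would be at most $q^{r-1}$; thus $|\mathcal{A}_i|\leq q^{r-1}$, forcing $j\geq 2$ (already assumed) and $|\mathcal{A}_i|=q^{r-j+1}$ with $2\leq j$. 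The key point is that $\mathcal{A}_i$ and $\mathcal{A}_l$ are supported at different primes $p_i\neq p_l$: by the description \eqref{eqn3.1} of how $p_i$ and $p_l$ split, and the fact established in the proof of Theorem \ref{thm3.2} that the classes $\mathcal{A}_i^{r_i}$ (for $1\le r_i\le a_i$) and likewise the $\mathcal{A}_l^{r_l}$ are distinct non-principal classes, one checks that $\langle\mathcal{A}_i\rangle\cap\langle\mathcal{A}_l\rangle$ meets the two prime-power ``ladders'' only in the identity, so $\langle\mathcal{A}_i\rangle\cap\langle\mathcal{A}_l\rangle=\mathcal{P}$. Then $|\langle\mathcal{A}_i\rangle\cdot\langle\mathcal{A}_l\rangle|=|\mathcal{A}_i|\,|\mathcal{A}_l|\geq q^{r-j+1}\cdot q^{j-1}=q^r=h(d)$, so $\mathfrak{C}(k_n)=\langle\mathcal{A}_i\rangle\times\langle\mathcal{A}_l\rangle$ with both factors nontrivial. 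That is only consistent with $\mathfrak{C}(k_n)$ being cyclic of order $q^r$ if one factor is trivial — contradiction — \emph{unless} in fact one of the two orders already equals $q^r$, in which case cyclicity is immediate. Either way $\mathfrak{C}(k_n)\cong\mathbb{Z}_{q^r}$.

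The step I expect to be the main obstacle is the intersection claim $\langle\mathcal{A}_i\rangle\cap\langle\mathcal{A}_l\rangle=\mathcal{P}$: a priori a power of $\mathcal{A}_i$ could coincide with a power of $\mathcal{A}_l$ even though the two ideals $\mathfrak{a}_i,\mathfrak{a}_l$ lie over different rational primes, and one must rule this out using the partial zeta-value computations. Concretely, if $\mathcal{A}_i^{r_i}=\mathcal{A}_l^{s_l}$ for some $1\le r_i,s_l$, comparing $\zeta_{k_n}(-1,\mathcal{A}_i^{r_i})$ with $\zeta_{k_n}(-1,\mathcal{A}_l^{s_l})$ via the formula in the proof of Theorem \ref{thm3.2} forces an equation in $p_i^{r_i}$ and $p_l^{s_l}$ that has no solution with distinct primes — exactly the type of impossibility repeatedly invoked there — so the two cyclic subgroups meet trivially. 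Making this comparison precise (and checking it persists for $r_i$ beyond the range $\le a_i$, by pulling back to a representative within $\langle\mathcal{A}_i\rangle$ whose zeta value is already computed) is the only genuinely delicate part; the rest is the elementary $q$-group bookkeeping sketched above.
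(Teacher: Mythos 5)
Your treatment of Part \textbf{II} and of case (i) of Part \textbf{I} is correct and is exactly the intended argument: the paper gives no explicit proof of this theorem, but its model is Theorem \ref{thm6.1}, and transplanting that argument via the bound $|\mathcal{A}_i|\geq 2a_i$ from the proof of Theorem \ref{thm3.2} (so that $|\mathcal{A}_i|$ is a power of $q$ exceeding $q^{r-1}$, hence equal to $q^r$) settles those cases.

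Case (ii) of Part \textbf{I}, however, is broken, and the failure is not merely the ``delicate step'' you flag at the end. First, the intersection claim $\langle\mathcal{A}_i\rangle\cap\langle\mathcal{A}_l\rangle=\mathcal{P}$ is not just hard to verify --- it is impossible under the very order bounds you derive: you show $|\mathcal{A}_i|\geq q^{r-j+1}$ and $|\mathcal{A}_l|\geq q^{j}$, so a trivial intersection would give a subgroup of order $|\mathcal{A}_i|\,|\mathcal{A}_l|\geq q^{r+1}>q^r=|\mathfrak{C}(k_n)|$. (Your final display quietly downgrades $|\mathcal{A}_l|$ to $q^{j-1}$ to make the product come out to $q^r$, which is inconsistent with the line before.) Second, even if one granted the trivial intersection, the conclusion ``$\mathfrak{C}(k_n)=\langle\mathcal{A}_i\rangle\times\langle\mathcal{A}_l\rangle$ with both factors nontrivial'' is precisely the statement that the group is \emph{not} cyclic, so no contradiction with the non-cyclicity hypothesis is obtained; the argument is circular at that point. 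Third, and most importantly, no argument using only the orders of $\mathcal{A}_i$ and $\mathcal{A}_l$ can work: the non-cyclic group $\mathbb{Z}_{q^{r-1}}\times\mathbb{Z}_{q}$ contains elements of order $q^{r-j+1}$ and of order $q^{j}$ for every $2\leq j<r$ (take suitable powers of a generator of the first factor), so the hypotheses of (ii) are compatible, at the level of element orders alone, with a non-cyclic class group. A correct proof must therefore inject further arithmetic input --- e.g.\ show, via the partial zeta values, that the unique order-$q$ subgroups of $\langle\mathcal{A}_i\rangle$ and $\langle\mathcal{A}_l\rangle$ cannot coincide, or that some explicit power of $\mathcal{A}_l$ forced into $\langle\mathcal{A}_i\rangle$ by the index computation is one of the classes already proved distinct from all $\mathcal{A}_i^{\pm u}$. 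Your proposal gestures at this in the last paragraph but only for exponents in the range $1\leq r_i\leq a_i$, $1\leq s_l\leq a_l$, whereas the powers that actually arise (such as $\mathcal{A}_l^{q^{j-1}}$ or $\mathcal{A}_i^{q^{\alpha-1}}$) need not lie in that range, so the needed comparison is not available as stated. This part of the proof needs to be redone.
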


\begin{theorem}\label{thm6.3}
Let $d=n^2+1\equiv 1 \pmod 8$. If 
$h(d)=q^r$ for some prime $q$ and positive integer $r\geq 2$, 
then,\\
\textbf{I.} Let $n=2^sp^t$ with $p$ be an odd prime, $s \geq 2$ 
and $t \geq 1$ be an integers. If $t \geq \frac{q^{r-1}+1}{2}$ 
or $s-1 \geq \frac{q^{r-1}+1}{2}$.
Then $\mathfrak{C}(k_n)\cong \mathbb{Z}_{q^r}$.\\
\textbf{II.} Let $n=2^s{p_{1}}^{a_{1}}{p_{2}}^{a_{2}}
\cdots{p_{m}}^{a_{m}}$ with $p_{i}$'s be distinct odd primes, $s 
\geq 2$ and  $m\geq2$. If one of the following holds:
\begin{itemize}
\item[(i)] $a_{i} \text{ or } s-1\geq \frac{q^{r-1}+1}{2}$ for 
any $1 \leq i \leq m$.
\item[(ii)] $a_i \geq \frac{q^{r-j}+1}{2}$ and $a_l\text{ or } 
s-1 \geq  \frac{q^{j-1}+1}{2}$ for some $i \neq l$ and $2 \leq j 
<r$.
\end{itemize} 
Then $\mathfrak{C}(k_n)\cong \mathbb{Z}_{q^r}$.
\end{theorem}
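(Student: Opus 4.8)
The plan is to run the same kind of argument as in Theorems \ref{thm6.1} and \ref{thm6.2}, feeding in the ideal classes and the lower bounds on their orders produced inside the proofs of Theorems \ref{thm3.3} and \ref{thm3.4}. Recall that for $d=n^2+1\equiv 1\pmod 8$ the prime $2$ splits as in \eqref{eqn3.3}; the proof of Theorem \ref{thm3.3} exhibits the ideal class $\mathcal{B}$ containing $\mathfrak{b}=\left(2,\frac{1+\sqrt d}{2}\right)$ with $|\mathcal{B}|\geq 2(s-1)$, while for each odd prime power $p_i^{a_i}\mid\mid n$ the proof of Theorem \ref{thm3.4} exhibits the class $\mathcal{A}_i$ containing $\left(p_i,\frac{1+\sqrt d}{2}\right)$ with $|\mathcal{A}_i|\geq 2a_i$, and those proofs also record that the relevant small powers of these classes are pairwise distinct and non-principal. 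Since $h(d)=q^r$ is assumed, Lagrange's theorem forces each $|\mathcal{A}_i|$ and $|\mathcal{B}|$ to be a power of $q$.

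First I would dispose of Part \textbf{I} and of hypothesis (i) of Part \textbf{II}, which only need a single generator. If $t\geq\frac{q^{r-1}+1}{2}$, then $|\mathcal{A}|\geq 2t>q^{r-1}$; a power of $q$ that exceeds $q^{r-1}$ and divides $q^r$ must equal $q^r$, so $\langle\mathcal{A}\rangle=\mathfrak{C}(k_n)$ and hence $\mathfrak{C}(k_n)\cong\mathbb{Z}_{q^r}$. The same reasoning applies verbatim with $\mathcal{A}_i$ in place of $\mathcal{A}$ under hypothesis (i), and with $\mathcal{B}$ (using $|\mathcal{B}|\geq 2(s-1)$) under the hypothesis $s-1\geq\frac{q^{r-1}+1}{2}$.

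For hypothesis (ii) of Part \textbf{II} the argument needs two generators. From $a_i\geq\frac{q^{r-j}+1}{2}$ one gets $|\mathcal{A}_i|>q^{r-j}$, hence $q^{r-j+1}\mid|\mathcal{A}_i|$; likewise $a_l\geq\frac{q^{j-1}+1}{2}$ (respectively $s-1\geq\frac{q^{j-1}+1}{2}$) forces the second class, say $\mathcal{C}$, to have order divisible by $q^{j}$. Then $\langle\mathcal{A}_i\rangle$ has index dividing $q^{j-1}$ in $\mathfrak{C}(k_n)$, so the image of $\mathcal{C}$ in $\mathfrak{C}(k_n)/\langle\mathcal{A}_i\rangle$ has order dividing $q^{j-1}$ while $\mathcal{C}$ itself has order $\geq q^{j}$; in particular $\mathcal{C}^{q^{j-1}}$ is a non-trivial element of $\langle\mathcal{A}_i\rangle$. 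Combining this with the distinctness of the small powers of $\mathcal{A}_i$ and $\mathcal{C}$ established in \S 3 — which is what pins down $\langle\mathcal{A}_i\rangle\cap\langle\mathcal{C}\rangle$ — one would conclude that $\langle\mathcal{A}_i,\mathcal{C}\rangle$ is cyclic of order $q^r$, whence $\mathfrak{C}(k_n)\cong\mathbb{Z}_{q^r}$. Along the way one must keep track of the coincidences already flagged in Theorems \ref{thm3.3}--\ref{thm3.4} (for instance $\mathcal{A}_i^{a_i}$ may equal $\mathcal{B}^{s-1}$, and top powers of distinct $\mathcal{A}_i$ may coincide when $m$ is odd); these affect only the counting, not the divisibility conclusions.

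The main obstacle is exactly this last group-theoretic step in case (ii): merely having an element of order $\geq q^{r-j+1}$ together with one of order $\geq q^{j}$ does not by itself force $\mathfrak{C}(k_n)$ to be cyclic, so the proof has to genuinely exploit the relative position of $\langle\mathcal{A}_i\rangle$ and $\langle\mathcal{C}\rangle$ — that is, the \S 3 distinctness relations — in order to exclude a splitting $\mathfrak{C}(k_n)\cong\mathbb{Z}_{q^{e_1}}\times\mathbb{Z}_{q^{e_2}}\times\cdots$ with $e_2\geq 1$. Getting that bookkeeping right, rather than the (routine) divisibility estimates, is where the real work lies.
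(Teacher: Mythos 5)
Your treatment of Part \textbf{I} and Part \textbf{II}(i) is correct and is exactly the argument the paper itself models in its proof of Theorem \ref{thm6.1} (the only Section 6 result it proves explicitly): a single ideal class whose order exceeds $q^{r-1}$ and divides $q^r$ must have order $q^r$ and hence generates the class group. Be aware, though, that the paper offers no proof of Theorem \ref{thm6.3} at all beyond the remark that it ``can also be proved using theorems in \S 3 and \S 4 and some group theoretic arguments,'' so for case \textbf{II}(ii) there is no argument in the paper to match yours against.

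For \textbf{II}(ii) you have correctly located the difficulty, but your sketch does not close it, and the step as outlined genuinely fails. An abelian group of order $q^r$ containing an element of order at least $q^{r-j+1}$ and an element of order at least $q^{j}$ need not be cyclic: take $r=4$, $j=2$ and $G=\mathbb{Z}_{q^{3}}\times\mathbb{Z}_{q}$ with $x=(1,0)$ of order $q^{3}$ and $y=(q,1)$ of order $q^{2}$. Your fallback --- the distinctness relations from the proofs of Theorems \ref{thm3.3} and \ref{thm3.4} --- does not rescue this, because those proofs only show that the classes $\mathcal{A}_i^{\pm r_i}$ with $1\leq r_i\leq a_i$ (and $\mathcal{B}^{\pm j}$ with $j\leq s-1$) are pairwise distinct; in the example above (say $q=2$, $a_i=3$, $a_l=2$) none of $y^{\pm 1},y^{\pm 2}$ lies among $x^{\pm 1},x^{\pm 2},x^{\pm 3}$, so the non-cyclic configuration is fully consistent with everything established in \S 3. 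The order count only shows $\langle\mathcal{A}_i\rangle\cap\langle\mathcal{C}\rangle$ is nontrivial, i.e.\ that the two cyclic subgroups share their unique subgroup of order $q$; it does not exclude $\mathfrak{C}(k_n)\cong\mathbb{Z}_{q^{e_1}}\times\mathbb{Z}_{q^{e_2}}$ with $e_2\geq 1$. Some genuinely new input about classes outside the range $r_i\leq a_i$ would be needed to finish case (ii); as it stands, both your proposal and the paper leave that case unproved.
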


\begin{theorem}\label{thm6.4}
Let $d=n^2+1 \equiv 2 \pmod 4$. If 
$h(d)=q^r$ for some prime $q$ and positive integer $r\geq 2$, 
then,\\
\textbf{I.} Let $n=p^t$ with $p$ be an odd prime and $t \geq 1$ 
an integer. If $t  \geq \frac{q^{r-1}+1}{2}$, then $\mathfrak{C}(k_n)\cong \mathbb{Z}_{q^r}$.\\
\textbf{II.} If $n={p_{1}}^{a_{1}}{p_{2}}^{a_{2}}\cdots{p_{m}}
^{a_{m}}$ with $p_{i}$'s be distinct odd primes and  $m \geq 2$. 
If one of the following holds:
\begin{itemize}
\item[(i)] $a_{i} \geq \frac{q^{r-1}+1}{2}$ for any $1 \leq i 
\leq m$.
\item[(ii)] $a_i \geq \frac{q^{r-j}+1}{2}$ and $a_l \geq 
\frac{q^{j-1}+1}{2}$  for some $i \neq l$ and for any $2 \leq j 
<r$.
\end{itemize} 
Then $\mathfrak{C}(k_n)\cong \mathbb{Z}_{q^r}$.
\end{theorem}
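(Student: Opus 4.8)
The plan is to follow the template of the proofs of Theorems \ref{thm6.1}--\ref{thm6.3}: Section 3 produces \emph{explicit} ideal classes whose orders admit good lower bounds, and in a group of order $q^r$ any element of order exceeding $q^{r-1}$ must generate the whole group. As a preliminary remark, $d=n^2+1\equiv 2\pmod 4$ forces $n$ odd and $2$ to ramify, and the class of the prime above $2$ is then $2$-torsion and, for $d\neq 2$, non-principal (an equation $u^2-dv^2=\pm 2$ has no solution once $d>2$); hence $h(d)$ is even and one necessarily has $q=2$ in the hypothesis, though the arguments below are uniform in $q$ and only use the divisibility $|\mathcal{A}|\mid q^r$.

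\textbf{Part I} is the easy half. With $n=p^t$, Theorem \ref{thm3.5} gives the class $\mathcal{A}=\big[(p,1+\sqrt d)\big]$ with $|\mathcal{A}|\geq 2t$. The hypothesis $t\geq\tfrac{q^{r-1}+1}{2}$ yields $|\mathcal{A}|\geq 2t\geq q^{r-1}+1>q^{r-1}$, and since $|\mathcal{A}|\mid h(d)=q^r$ the only divisor of $q^r$ exceeding $q^{r-1}$ is $q^r$ itself. Thus $|\mathcal{A}|=q^r$ and $\mathfrak{C}(k_n)=\langle\mathcal{A}\rangle\cong\mathbb{Z}_{q^r}$.

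\textbf{Part II} uses Theorem \ref{thm3.6}, which for $n=p_1^{a_1}\cdots p_m^{a_m}$ furnishes classes $\mathcal{A}_i=\big[(p_i,1+\sqrt d)\big]$ with $|\mathcal{A}_i|\geq 2a_i$; to these I would adjoin the relation $\prod_{i=1}^m\mathcal{A}_i^{2a_i}=\mathcal{P}$, which follows from $N(1+\sqrt d)=1-d=-n^2$ together with $(1+\sqrt d)=\prod_i(p_i,1+\sqrt d)^{2a_i}$ (each $\mathfrak{a}_i$ divides $(1+\sqrt d)$ while its conjugate does not). In case (i) the argument of Part I applies verbatim to $\mathcal{A}_i$: $|\mathcal{A}_i|\geq 2a_i>q^{r-1}$ forces $|\mathcal{A}_i|=q^r$ and cyclicity. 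In case (ii) the hypotheses give $|\mathcal{A}_i|\geq 2a_i>q^{r-j}$ and $|\mathcal{A}_l|\geq 2a_l>q^{j-1}$, which after dividing $q^r$ sharpen to $|\mathcal{A}_i|\geq q^{r-j+1}$ and $|\mathcal{A}_l|\geq q^{j}$. The remaining task is to force $\mathfrak{C}(k_n)$ to be cyclic: I would argue by contradiction, writing $\mathfrak{C}(k_n)\cong\bigoplus_{s}\mathbb{Z}_{q^{e_s}}$ with $e_1\geq\cdots\geq e_k$ and $k\geq 2$ (so $e_1\leq r-1$), locate $\mathcal{A}_i$ and $\mathcal{A}_l$ inside this decomposition, and use the relation $\prod_k\mathcal{A}_k^{2a_k}=\mathcal{P}$ — which constrains $\langle\mathcal{A}_i\rangle\cap\langle\mathcal{A}_l\rangle$ from below — to produce a monomial in $\mathcal{A}_i,\mathcal{A}_l$ of order $q^r$, contradicting $e_1\leq r-1$.

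The genuine obstacle is exactly this last step of case (ii): the bounds $|\mathcal{A}_i|\geq q^{r-j+1}$ and $|\mathcal{A}_l|\geq q^{j}$ do \emph{not} by themselves force cyclicity — a group such as $\mathbb{Z}_{q^{r-1}}\times\mathbb{Z}_{q}$ accommodates both when $j=2$ — so the proof must genuinely exploit the multiplicative relation among the $\mathcal{A}_k$, and with it a careful interplay between the lower bounds on the exponents $a_k$ and their $q$-adic valuations $v_q(a_k)$ (which govern the order of $\mathcal{A}_k^{2a_k}$). Once this group-theoretic lemma on finite abelian $q$-groups is isolated and proved, the analogous case-(ii) statements in Theorems \ref{thm6.2} and \ref{thm6.3} follow by the same bookkeeping, the split or ramified prime above $2$ (respectively its class $\mathcal{B}$) serving as an auxiliary generator wherever the factorization of $n$ calls for it.
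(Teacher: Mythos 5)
Your Part I and Part II(i) are exactly the paper's argument: the paper proves only Theorem \ref{thm6.1} explicitly (an ideal class whose order exceeds $q^{r-1}$ and divides $q^{r}$ must have order $q^{r}$, hence generates the class group) and merely asserts that Theorems \ref{thm6.2}--\ref{thm6.5} follow from \S 3--\S 4 ``and some group theoretic arguments.'' Your use of Theorem \ref{thm3.5} (resp.\ Theorem \ref{thm3.6}) to get $|\mathcal{A}|\geq 2t>q^{r-1}$ (resp.\ $|\mathcal{A}_i|\geq 2a_i>q^{r-1}$) is precisely that template, and your side remark that the ramified prime above $2$ forces $h(d)$ even, hence $q=2$ here, is consistent with the proof of Theorem \ref{thm3.5}.

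For Part II(ii) there is a genuine gap, and you have named it correctly: the bounds $|\mathcal{A}_i|\geq q^{r-j+1}$ and $|\mathcal{A}_l|\geq q^{j}$ with $2\leq j<r$ do not by themselves force cyclicity, since $\mathbb{Z}_{q^{r-1}}\times\mathbb{Z}_{q}$ already contains elements of both orders when $j=2$. Your proposed repair via the relation $\prod_k\mathcal{A}_k^{2a_k}=\mathcal{P}$ is arithmetically correct (each $\mathfrak{a}_k=(p_k,1+\sqrt d)$ divides $(1+\sqrt d)$, its conjugate does not, and $N(1+\sqrt d)=-n^2$, so $(1+\sqrt d)=\prod_k\mathfrak{a}_k^{2a_k}$), but you do not carry the argument to a conclusion, and it is doubtful that this relation plus the two order bounds suffices without also invoking the pairwise distinctness of the classes $\mathcal{A}_i^{\pm r_i}$ that the partial zeta values of \S 3 provide. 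Be aware that the paper supplies no proof of case (ii) either --- Theorem \ref{thm6.4} is stated without proof, and the one proof given in \S 6 (for Theorem \ref{thm6.1}) covers only the single-generator situation of your Part I and II(i) --- so the obstruction you isolate is a gap in the paper's claim as much as in your attempt. To complete the argument one must actually prove the finite abelian $q$-group lemma you describe, feeding in all the constraints from \S 3 and not just the two order lower bounds, or else restrict the theorem to the cases handled by (i).
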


\begin{theorem}\label{thm6.5}
Let $d=n^2+4 \equiv 5 \pmod 8$. If $h(d)=q^r$ 
for some prime $q$ and positive integer $r\geq 2$, then,\\
\textbf{I.} Let $n=p^t$ with $p$ be an pdd prime and $t \geq 1$ 
an integer. If $t  \geq q^{r-1}+1$, then $\mathfrak{C}(k_n)\cong \mathbb{Z}_{q^r}$.\\
\textbf{II.} Let $n={p_{1}}^{a_{1}}{p_{2}}^{a_{2}}$ with 
$p_{1},p_{2}$ be distinct odd primes. If $a_{1} \text{ or }a_{2} 
\geq \frac{q^{r-1}+1}{2}$, then $\mathfrak{C}(k_n)\cong \mathbb{Z}_{q^r}$.\\
\textbf{III.} Let $n={p_{1}}^{a_{1}}{p_{2}}^{a_{2}}\cdots{p_{m}}
^{a_{m}}$ with $p_{i}$'s be distinct odd primes and  $m>2$. If 
one of the following holds:
\begin{itemize}
\item[(i)] $a_{i} \geq \frac{q^{r-1}+1}{2}$ for any $1 \leq i 
\leq m$.
\item[(ii)] $a_i \geq \frac{q^{r-j}+1}{2}$ and $a_l \geq 
\frac{q^{j-1}+1}{2}$  for some $i \neq l$ and for any $2 \leq j 
<r$.
\end{itemize} 
Then $\mathfrak{C}(k_n)\cong \mathbb{Z}_{q^r}$.
\end{theorem}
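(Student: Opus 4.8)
The plan is to run, case by case, the two-step mechanism of Theorems \ref{thm6.1}--\ref{thm6.4}: first produce ideal classes of \emph{provably large order} from the results of \S4, then use $h(d)=q^r$ together with Lagrange's theorem to pin those orders to large enough powers of $q$ to force cyclicity of $\mathfrak{C}(k_n)$.

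First I would fix the generators and record lower bounds for their orders. When $n=p^t$ (Part I) let $\mathcal{A}$ be the class of $\left(p,\frac{p+2+\sqrt d}{2}\right)$; using the value $\zeta_{k_n}(-1,\mathcal{A}^r)=\frac{n^3+n(p^{4r}+10p^{2r})}{360p^{2r}}$ established in Theorem \ref{thm4.1}, one gets $|\mathcal{A}|\geq t$. When $n={p_1}^{a_1}\cdots{p_m}^{a_m}$ (Parts II, III) let $\mathcal{A}_i$ be the class of $\left(p_i,\frac{p_i+2+\sqrt d}{2}\right)$; the computation behind Theorem \ref{thm4.2} shows the $2a_i$ classes $\mathcal{A}_i^{\pm1},\dots,\mathcal{A}_i^{\pm a_i}$ are distinct and nonprincipal (any coincidence among them, or with $\mathcal{P}$, would force $n$ to be a power of $p_i$), so $|\mathcal{A}_i|\geq 2a_i$. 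Since $h(d)=q^r$, Lagrange forces each of $|\mathcal{A}|$ and $|\mathcal{A}_i|$ to be a power of $q$ dividing $q^r$.

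The cases with a single large exponent close at once. In Part I, $t\geq q^{r-1}+1$ gives $|\mathcal{A}|\geq q^{r-1}+1$; the only divisor $q^k$ of $q^r$ exceeding $q^{r-1}$ is $q^r$ itself, so $|\mathcal{A}|=q^r$ and $\mathfrak{C}(k_n)=\langle\mathcal{A}\rangle\cong\mathbb{Z}_{q^r}$. In Part II and Part III$(i)$, $a_i\geq\frac{q^{r-1}+1}{2}$ gives $|\mathcal{A}_i|\geq 2a_i\geq q^{r-1}+1$, hence $|\mathcal{A}_i|=q^r$ and again $\mathfrak{C}(k_n)\cong\mathbb{Z}_{q^r}$.

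The substantive case is Part III$(ii)$. The hypotheses give $|\mathcal{A}_i|\geq 2a_i\geq q^{r-j}+1$ and $|\mathcal{A}_l|\geq 2a_l\geq q^{j-1}+1$; being $q$-powers exceeding $q^{r-j}$ and $q^{j-1}$, these orders are $\geq q^{r-j+1}$ and $\geq q^{j}$. I would then argue by contradiction: if $\mathfrak{C}(k_n)$ were noncyclic its exponent would be $q^{e}$ with $e\leq r-1$, so the quotient $\mathfrak{C}(k_n)/\langle\mathcal{A}_i\rangle$ has order $\leq q^{r-(r-j+1)}=q^{j-1}$, whence the image of $\mathcal{A}_l$ has order $<q^{j}\leq|\mathcal{A}_l|$; this puts a nontrivial element $\mathcal{A}_l^{q^{c}}$ into $\langle\mathcal{A}_i\rangle\cap\langle\mathcal{A}_l\rangle$, i.e.\ it yields a relation $\mathcal{A}_i^{r_i}=\mathcal{A}_l^{s_l}$. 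Comparing the partial zeta values along this relation gives $\frac{n^2}{p_i^{2r_i}}+p_i^{2r_i}=\frac{n^2}{p_l^{2s_l}}+p_l^{2s_l}$, and since $p_i\neq p_l$ this forces $n^2=p_i^{2r_i}p_l^{2s_l}$, i.e.\ $n=p_i^{r_i}p_l^{s_l}$, contradicting $m>2$. Hence $\mathfrak{C}(k_n)$ is cyclic, and of order $q^r$, so $\cong\mathbb{Z}_{q^r}$. The step I expect to be the real obstacle is this last one: the zeta identity of \S4 is only proved for exponents $1\leq r_i\leq a_i$, whereas the relation extracted from the quotient argument may carry larger exponents; so one must either arrange the common subgroup element to lie in the admissible range, or replace the zeta comparison by a direct ideal-theoretic argument — for instance by bounding the norm of a generator of $\mathfrak{p}_i^{r_i}\overline{\mathfrak{p}_l}^{s_l}$, where $\mathfrak{p}_i,\mathfrak{p}_l$ are the chosen primes above $p_i,p_l$. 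The remaining bookkeeping is routine.
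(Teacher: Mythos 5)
Your Parts I, II and III(i) are correct and coincide with the paper's own treatment: the paper only writes out the model case (Theorem \ref{thm6.1}) and asserts that Theorem \ref{thm6.5} follows ``using theorems in \S 3 and \S 4 and some group theoretic arguments,'' and your single\-/generator cases reproduce that model exactly --- read off $|\mathcal{A}|\geq t$ from Theorem \ref{thm4.1} (resp. $|\mathcal{A}_i|\geq 2a_i$ from the proof of Theorem \ref{thm4.2}), note that the order divides $q^r$ and exceeds $q^{r-1}$, hence equals $q^r$.

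Part III(ii) is where the real content lies, and your argument for it has a genuine gap --- a structural one, not merely the range-of-validity issue you flag at the end. Observe that your chain ``$\mathfrak{C}(k_n)/\langle\mathcal{A}_i\rangle$ has order $\leq q^{j-1}$, hence $\mathcal{A}_l^{q^{j-1}}$ is a nontrivial element of $\langle\mathcal{A}_i\rangle\cap\langle\mathcal{A}_l\rangle$'' nowhere uses the assumption that $\mathfrak{C}(k_n)$ is noncyclic: a nontrivial relation $\mathcal{A}_i^{u}=\mathcal{A}_l^{v}$ is forced by the order bounds alone, and it must equally occur when the group \emph{is} cyclic (both cyclic subgroups then contain the unique subgroup of order $q$). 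So if your zeta-value comparison really refuted every such relation, you would have proved the hypotheses unsatisfiable, not that the group is cyclic. And the comparison cannot be extended to do this: the forced relation generally carries exponents outside $1\leq r_i\leq a_i$, $1\leq s_l\leq a_l$, where Lemma \ref{lem4.1} and hence the formula for $\zeta_{k_n}(-1,\mathcal{A}_i^{r_i})$ are simply not available. Worse, everything \S 4 actually supplies --- $|\mathcal{A}_i|\geq 2a_i$, $|\mathcal{A}_l|\geq 2a_l$, and pairwise distinctness of $\mathcal{P}$ and the classes $\mathcal{A}_i^{\pm r_i},\mathcal{A}_l^{\pm s_l}$ in the admissible ranges --- is consistent with a noncyclic group: for $q=2$, $r=4$, $j=2$, $a_i=3$, $a_l=2$, take $\mathfrak{C}=\mathbb{Z}_{8}\times\mathbb{Z}_{2}$ with $\mathcal{A}_i=(1,0)$ and $\mathcal{A}_l=(2,1)$; one checks that every distinctness constraint obtainable from the partial zeta values holds, yet the group is not cyclic. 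Hence III(ii) (and likewise condition (ii) in Theorems \ref{thm6.2}--\ref{thm6.4}) requires a genuinely new input controlling \emph{where} $\langle\mathcal{A}_i\rangle$ and $\langle\mathcal{A}_l\rangle$ first intersect; your suggested repair via norms of generators of $\mathfrak{p}_i^{r_i}\overline{\mathfrak{p}_l}^{s_l}$ does not address this, and the paper itself offers no proof of this case either.
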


%

\section{\textbf{Corollaries}}
We shall now deduce some interesting corollaries to our main theorems.\\
\\
\textit{Case I:} $d=n^2+1\equiv 5 \pmod 8$.

\begin{corollary}\label{cor7.1}
Let $n=2p^t$ with $p$ be an odd prime and $t > 1$ an integer. If 
$t \leq h(d) \leq 2t-1$ then the class group is cyclic. 
\end{corollary}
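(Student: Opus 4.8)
The plan is to deduce the cyclicity directly from Theorem~\ref{thm3.1} combined with Lagrange's theorem, by a short divisibility squeeze. First I would recall the setup: since $d=n^2+1\equiv 5\pmod 8$ and $n=2p^t$ with $p$ an odd prime, Theorem~\ref{thm3.1} applies, producing an ideal class $\mathcal{A}$ containing $\mathfrak{a}=\left(p,\frac{1+\sqrt{d}}{2}\right)$ whose powers $\mathcal{A}^r$ are non-principal for every $1\leq r< t$ (this is exactly the distinctness of $\zeta_{k_n}(-1,\mathcal{A}^r)$ from $\zeta_{k_n}(-1,\mathcal{P})$ established there). Consequently $|\mathcal{A}|\geq t$, and in particular, since $t>1$, $\mathcal{A}$ is genuinely non-principal so the statement is non-vacuous.

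Next I would invoke Lagrange's theorem in the finite abelian group $\mathfrak{C}(k_n)$: the order $|\mathcal{A}|$ divides $h(d)$, say $h(d)=k\,|\mathcal{A}|$ with $k$ a positive integer. If $k\geq 2$ then $h(d)\geq 2|\mathcal{A}|\geq 2t$, which contradicts the hypothesis $h(d)\leq 2t-1$. Hence $k=1$, i.e. $|\mathcal{A}|=h(d)$, and therefore $\mathfrak{C}(k_n)=\langle\mathcal{A}\rangle$ is cyclic of order $h(d)$.

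There is essentially no obstacle here; the only point needing a line of care is the lower bound $|\mathcal{A}|\geq t$, but this is precisely what is proved inside Theorem~\ref{thm3.1}. I would also remark that the inequality $t\leq h(d)$ in the hypothesis is in fact automatic by Theorem~\ref{thm3.1}, so the real content of the assumption is the upper bound $h(d)\leq 2t-1$, which is what pins $|\mathcal{A}|$ down to all of $h(d)$.
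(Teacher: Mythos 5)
Your proposal is correct and is essentially identical to the paper's own proof: both extract the bound $|\mathcal{A}|\geq t$ from Theorem~\ref{thm3.1} and then combine $|\mathcal{A}|\mid h(d)$ with the hypothesis $h(d)\leq 2t-1$ to force $|\mathcal{A}|=h(d)$, hence cyclicity. Your version merely spells out the divisibility squeeze a little more explicitly.
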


\begin{corollary}\label{cor7.2}
Let $n=2{p_{1}}^{a_{1}}{p_{2}}^{a_{2}}\cdots{p_{m}}^{a_{m}}$ 
with $p_{i}$'s distinct odd primes, $a_i$'s be postive integer 
and  $m>2$. Then the following holds:
\begin{itemize}
\item[(i)] If $m$ is even then $h(d) \geq 2(a_{1}+a_{2}+\cdots+ 
a_{m})-m+2$
\item[(ii)]If $h(d)$ is odd then $h(d) \geq 2(a_{1}+a_{2}+\cdots
+ a_{m})+1$ and equality holds if and only if 
$$\zeta_{k_n}(-1)=\frac{n^3+14n}{360}+\sum_{\substack{1 \leq i 
\leq m\\ 1 \leq r_{i} \leq a_{i}}} \frac{n^3+n(4{p_{i}}^{4r_{i}}
+10{p_{i}}^{2r_{i}})}{180{p_{i}}^{2r_{i}}}.$$
\end{itemize}

\end{corollary}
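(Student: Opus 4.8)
The plan is to read Corollary \ref{cor7.2} off the proof of Theorem \ref{thm3.2}(i) by carrying along one extra invariant, the parity of the orders $|\mathcal{A}_i|$. We work with $d=n^2+1\equiv 5\pmod 8$, $n=2p_1^{a_1}\cdots p_m^{a_m}$, $m>2$, and retain the notation of \S3.1: each $p_i$ splits as in \eqref{eqn3.1}, $\mathfrak a_i=\left(p_i,\frac{1+\sqrt d}{2}\right)$ lies in an ideal class $\mathcal A_i$, and, exactly as computed in the proof of Theorem \ref{thm3.2},
$$\zeta_{k_n}(-1,\mathcal A_i^{r_i})=\frac{n^3+n(4p_i^{4r_i}+10p_i^{2r_i})}{360\,p_i^{2r_i}}=\zeta_{k_n}(-1,\mathcal A_i^{-r_i}),\qquad 1\le r_i\le a_i .$$
Since $m>2$, the integer $n/2=p_1^{a_1}\cdots p_m^{a_m}$ has at least three distinct prime divisors, so it is not a product of two prime powers $p_i^{r_i}p_j^{s_j}$; a short inspection of the displayed expression (as a function of $p_i^{r_i}$) then shows that the values $\zeta_{k_n}(-1,\mathcal A_i^{r_i})$, $1\le i\le m$, $1\le r_i\le a_i$, are pairwise distinct and distinct from $\zeta_{k_n}(-1,\mathcal P)$. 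Hence the $\mathcal A_i^{r_i}$ are non-principal, pairwise distinct for distinct pairs $(i,r_i)$, and, as in Theorem \ref{thm3.2}, $|\mathcal A_i|\ge 2a_i$ for each $i$.

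The first real step is a counting lemma: inside $\{\mathcal A_i^{r}:1\le|r|\le a_i\}$ the only possible coincidence is $\mathcal A_i^{a_i}=\mathcal A_i^{-a_i}$, and it occurs exactly when $|\mathcal A_i|=2a_i$. Indeed $\mathcal A_i^{r}=\mathcal A_i^{-s}$ with $r\ne s$ is ruled out in the proof of Theorem \ref{thm3.2}, while $\mathcal A_i^{r}=\mathcal A_i^{-r}$ with $1\le r\le a_i$ forces $\mathcal A_i^{2r}=\mathcal P$, hence $|\mathcal A_i|\mid 2r$, and together with $|\mathcal A_i|\ge 2a_i\ge 2r$ this gives $r=a_i$ and $|\mathcal A_i|=2a_i$. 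Consequently the set $\{\mathcal A_i^{r}:1\le|r|\le a_i\}$ contributes at least $2a_i-1$ distinct non-principal classes, with equality precisely when $|\mathcal A_i|=2a_i$; adjoining $\mathcal P$ and using distinctness across $i$ recovers $h(d)\ge 2(a_1+\cdots+a_m)-m+1$.

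For part (i): if $h(d)=2(a_1+\cdots+a_m)-m+1$, then by the counting lemma each $\{\mathcal A_i^{r}:1\le|r|\le a_i\}$ must contribute exactly $2a_i-1$ classes, so $|\mathcal A_i|=2a_i$ for every $i$; as $a_i\ge 1$, the even number $|\mathcal A_i|$ divides $h(d)$, so $h(d)$ is even. But for $m$ even the integer $2\sum a_i-m+1$ is odd, a contradiction; hence $h(d)\ge 2(a_1+\cdots+a_m)-m+2$. For part (ii): assume $h(d)$ is odd. Then each $|\mathcal A_i|$ divides $h(d)$ and is odd, so $|\mathcal A_i|\ge 2a_i$ upgrades to $|\mathcal A_i|\ge 2a_i+1$; in particular $\mathcal A_i^{a_i}\ne\mathcal A_i^{-a_i}$, so all $2a_i$ classes $\mathcal A_i^{\pm1},\dots,\mathcal A_i^{\pm a_i}$ are distinct. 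Together with $\mathcal P$ this yields $1+2\sum a_i$ distinct ideal classes, whence $h(d)\ge 2(a_1+\cdots+a_m)+1$. For the equality criterion, exactly as in the earlier proofs we write $\zeta_{k_n}(-1)=\sum_{\mathfrak A}\zeta_{k_n}(-1,\mathfrak A)$ with nonnegative summands to obtain
$$\zeta_{k_n}(-1)\ \ge\ \zeta_{k_n}(-1,\mathcal P)+\sum_{\substack{1\le i\le m\\1\le r_i\le a_i}}\bigl(\zeta_{k_n}(-1,\mathcal A_i^{r_i})+\zeta_{k_n}(-1,\mathcal A_i^{-r_i})\bigr),$$
with equality exactly when $\{\mathcal P\}\cup\{\mathcal A_i^{\pm r_i}\}$ exhausts $\mathfrak C(k_n)$, i.e. when $h(d)=2\sum a_i+1$; substituting $\zeta_{k_n}(-1,\mathcal P)=\frac{n^3+14n}{360}$ from \eqref{eqn3.2} together with the value of $\zeta_{k_n}(-1,\mathcal A_i^{\pm r_i})$ above gives the displayed identity.

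The delicate points are all inherited from \S3: the order bound $|\mathcal A_i|\ge 2a_i$ (already inside Theorem \ref{thm3.2}), the fact that for $m>2$ the partial zeta values separate the classes $\mathcal A_i^{r_i}$, and the nonnegativity of the partial zeta values at $-1$ needed to turn $\zeta_{k_n}(-1)=\sum_{\mathfrak A}\zeta_{k_n}(-1,\mathfrak A)$ into the above inequality. Once these are granted, both parts reduce to the elementary observation that $|\mathcal A_i|$ divides $h(d)$ combined with a parity check, and I do not anticipate any obstacle beyond carefully importing the distinctness statements from the proof of Theorem \ref{thm3.2}.
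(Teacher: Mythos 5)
Your argument is correct and matches the paper's: part (ii) is exactly the paper's proof (each $|\mathcal{A}_i|\ge 2a_i$ from Theorem \ref{thm3.2}, upgraded to $|\mathcal{A}_i|\ge 2a_i+1$ by divisibility when $h(d)$ is odd, then the equality criterion read off from the partial zeta decomposition), and your part (i) is the parity obstruction the paper already signals in the proof of Theorem \ref{thm3.2} with the parenthetical ``only if $m$ is odd.'' You are somewhat more explicit than the paper in justifying the counting (the separation of the classes $\mathcal{A}_i^{r_i}$ via $n/2$ not being a product of two prime powers, and the fact that the only possible coincidence is $\mathcal{A}_i^{a_i}=\mathcal{A}_i^{-a_i}$), but the route is the same.
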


\begin{corollary}\label{cor7.3}
Let $n=2{p_{1}}^{a_{1}}{p_{2}}^{a_{2}}$ with $p_{1},p_{2}$ be 
distinct odd primes. Then the following holds:
\begin{itemize}
\item[(i)] If $a_1 \text{ or } a_2>1$ and 
$h(d)$ is odd, then $h(d) \geq 2(a_{1}+a_{2})+1$ and equality 
holds if and only if 
$$\zeta_{k_n}(-1)=\frac{n^3+14n}{360}+\sum_{\substack{i=1,2 \\ 1 
\leq r_{i} \leq a_{i}}} \frac{n^3+n(4{p_{i}}^{4r_{i}}+10{p_{i}}
^{2r_{i}})}{180{p_{i}}^{2r_{i}}}.$$
\item[(ii)] If $a_1=a_2=1$, then $h(d)=3$ if and only if
$$\zeta_{k_n}(-1)=\frac{n^3+14n}{360}+ \frac{n^3+n(4{p_1}
^{4}+10{p_{1}}^{2})}{180{p_{1}}^{2}}.$$

\end{itemize}
\end{corollary}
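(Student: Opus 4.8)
The plan is to sharpen the counting behind Theorem~\ref{thm3.2}(ii) by also bringing in the ``product'' ideal classes $\mathcal{A}_1^{r_1}\mathcal{A}_2^{r_2}$. Since $p_1^{r_1}p_2^{r_2}\mid n$ for all $0\le r_i\le a_i$, the $\mathbb{Z}$-module $\big[p_1^{r_1}p_2^{r_2},\tfrac{1+\sqrt d}{2}\big]$ is, by the argument of Lemma~\ref{lem3.1} and \cite[Proposition 2.6 and 2.11]{Fra}, exactly the ideal $\mathfrak{a}_1^{r_1}\mathfrak{a}_2^{r_2}$, of norm $p_1^{r_1}p_2^{r_2}$, with that set as an integral basis. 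Running this basis through Lemma~\ref{lem2.1}, Lemma~\ref{lem2.2} and Theorem~\ref{thm2.1} exactly as in \S3 gives, for $0\le r_i\le a_i$,
\[\zeta_{k_n}(-1,\mathcal{A}_1^{r_1}\mathcal{A}_2^{r_2})=f\big(p_1^{r_1}p_2^{r_2}\big),\qquad f(m):=\frac{n^3+n(4m^4+10m^2)}{360\,m^2},\]
and, as already used in \S3, $\zeta_{k_n}(-1,\mathcal{C})=\zeta_{k_n}(-1,\mathcal{C}^{-1})$ for every ideal class $\mathcal{C}$. An elementary computation yields the rigidity $f(m)=f(m')\Leftrightarrow m=m'$ or $mm'=n/2=p_1^{a_1}p_2^{a_2}$; hence two of these classes have the same partial zeta value precisely when their exponent vectors agree or sum to $(a_1,a_2)$.

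Now assume $h(d)$ is odd, so $\mathfrak{C}(k_n)$ has no element of order $2$. Then $\mathcal{A}_i^{r_i}\ne\mathcal{A}_i^{-r_i}$ for $1\le r_i\le a_i$; since also $\mathcal{A}_i^{r_i}\ne\mathcal{P}$ (otherwise $n=2p_i^{r_i}$), the $2a_i$ classes $\mathcal{A}_i^{\pm r_i}$ ($1\le r_i\le a_i$) are distinct and non-trivial, so $|\mathcal{A}_i|\ge 2a_i+1$. By the rigidity above, the list $\mathcal{L}=\{\mathcal{P}\}\cup\{\mathcal{A}_1^{\pm r_1}\}_{1\le r_1\le a_1}\cup\{\mathcal{A}_2^{\pm r_2}\}_{1\le r_2\le a_2}$ consists of pairwise distinct classes, with the single possible exception $\mathcal{A}_1^{a_1}\in\{\mathcal{A}_2^{a_2},\mathcal{A}_2^{-a_2}\}$ (which then forces $\mathcal{A}_1^{-a_1}$ to be the other of the two). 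If this exception does \emph{not} occur, then $|\mathcal{L}|=2(a_1+a_2)+1$, so $h(d)\ge 2(a_1+a_2)+1$; equality forces $\mathfrak{C}(k_n)=\mathcal{L}$, and summing partial zeta values over $\mathcal{L}$ (using $\zeta_{k_n}(-1,\mathcal{A}_i^{r_i})=\zeta_{k_n}(-1,\mathcal{A}_i^{-r_i})$) gives exactly the displayed identity; conversely, since every partial zeta value is positive, that identity forces $\mathfrak{C}(k_n)=\mathcal{L}$.

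It remains to treat the exceptional case, which is where the hypothesis ``$a_1$ or $a_2>1$'' enters: after swapping the indices we may assume $a_1\ge 2$, and after replacing $\mathcal{A}_2$ by $\mathcal{A}_2^{-1}$ if necessary, $\mathcal{C}:=\mathcal{A}_1^{a_1}=\mathcal{A}_2^{a_2}$. I would then adjoin the classes $\mathcal{A}_1^{a_1-1}\mathcal{A}_2$ and its inverse. Their common partial zeta value is $f(p_1^{a_1-1}p_2)$, which by the rigidity can equal a value attained on $\mathcal{L}$ only as $f(p_1)$ and only when $a_2=1$; in that sub-case $\mathcal{A}_1^{a_1-1}\mathcal{A}_2=\mathcal{A}_1^{2a_1-1}$, and this is not $\mathcal{A}_1^{\pm1}$ since $|\mathcal{A}_1|\ge 2a_1+1$ divides neither $2a_1-2$ nor $2a_1$. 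Hence these two classes lie outside $\mathcal{L}$, they are distinct ($h(d)$ odd), and again $h(d)\ge|\mathcal{L}|+2=2(a_1+a_2)+1$. For the equality statement in this case one uses $f(p_1^{a_1-1}p_2)\ne f(p_1^{a_1})$ together with positivity of partial zeta values to see that the displayed $\zeta_{k_n}(-1)$ identity cannot be met, which keeps the ``if and only if'' in (i) intact.

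Part (ii) is the residual case $a_1=a_2=1$: then $n=2p_1p_2$, so $\zeta_{k_n}(-1,\mathcal{A}_1)=\zeta_{k_n}(-1,\mathcal{A}_2)$ and $\mathcal{A}_1,\mathcal{A}_2$ need not be distinct, the minimal possibility (cf.\ Theorem~\ref{thm3.2}(ii)) being $\mathfrak{C}(k_n)=\{\mathcal{P},\mathcal{A}_1,\mathcal{A}_1^{-1}\}\cong\mathbb{Z}/3\mathbb{Z}$ with $\mathcal{A}_2\in\{\mathcal{A}_1,\mathcal{A}_1^{-1}\}$ and $\mathcal{A}_1\ne\mathcal{A}_1^{-1}$ (as $h(d)=3$ is odd); summing partial zeta values over this group gives $\zeta_{k_n}(-1)=\zeta_{k_n}(-1,\mathcal{P})+2\zeta_{k_n}(-1,\mathcal{A}_1)$, the stated identity, and any larger class group contributes strictly more. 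The main obstacle is the exceptional case of (i): one must be confident that the auxiliary classes $\mathcal{A}_1^{a_1-1}\mathcal{A}_2$ genuinely fall outside $\mathcal{L}$ in every sub-case and that equality there is incompatible with the displayed identity — this is where the $f$-rigidity, the order bound $|\mathcal{A}_i|\ge 2a_i+1$, and a parity argument must be combined carefully; the rest is the linear bookkeeping already carried out in \S3.
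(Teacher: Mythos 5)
Your lower-bound argument for part (i) is sound and actually goes beyond what the paper records: the paper proves only Corollaries 7.1 and 7.2(ii) and declares the remaining cases ``similar'', but the direct transplant of the 7.2(ii) argument to $m=2$ yields only $h(d)\ge 2(a_1+a_2)-1$, precisely because of the cross-coincidence $\mathcal{A}_1^{\pm a_1}=\mathcal{A}_2^{\mp a_2}$ that you isolate. Your device of adjoining the mixed classes $\mathcal{A}_1^{a_1-1}\mathcal{A}_2^{\pm 1}$, justified by the rigidity $f(m)=f(m')\Leftrightarrow m=m'$ or $mm'=n/2$, genuinely repairs the count and appears nowhere in the paper.

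The gap is in the equality (``if and only if'') half of (i), and your ``exceptional case'' is not exceptional --- it always occurs. Since $N\bigl(\tfrac{1+\sqrt d}{2}\bigr)=-(n/2)^2$ and $\tfrac{1+\sqrt d}{2}\notin\bigl(p_i,\tfrac{1-\sqrt d}{2}\bigr)$, one has $\bigl(\tfrac{1+\sqrt d}{2}\bigr)=\mathfrak{a}_1^{2a_1}\mathfrak{a}_2^{2a_2}$, so $(\mathcal{A}_1^{a_1}\mathcal{A}_2^{a_2})^2=\mathcal{P}$, and oddness of $h(d)$ forces $\mathcal{A}_1^{a_1}=\mathcal{A}_2^{-a_2}$. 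Hence $\mathcal{L}$ always has exactly $2(a_1+a_2)-1$ elements and the entire burden falls on the branch you defer. There, if $h(d)=2(a_1+a_2)+1$, the class group must equal $\mathcal{L}\sqcup\{\mathcal{D},\mathcal{D}^{-1}\}$ with $\mathcal{D}=\mathcal{A}_1^{a_1-1}\mathcal{A}_2$, so $\zeta_{k_n}(-1)=\sum_{\mathcal{C}\in\mathcal{L}}\zeta_{k_n}(-1,\mathcal{C})+2f(p_1^{a_1-1}p_2)$, whereas the displayed right-hand side equals $\sum_{\mathcal{C}\in\mathcal{L}}\zeta_{k_n}(-1,\mathcal{C})+2f(p_2^{a_2})$; by your own rigidity these differ when $a_1\ge 2$. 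Thus ``equality $\Rightarrow$ identity'' fails whenever equality is attainable, and your proposed fix --- showing the identity cannot be met --- does not keep the biconditional intact unless you also prove that $h(d)=2(a_1+a_2)+1$ can never occur, which you do not attempt and for which there is no evident reason. You must either restate the criterion with $2f(p_1^{a_1-1}p_2)$ (or whichever extra pair of classes actually occurs) replacing the $r_2=a_2$ term, or rule out equality altogether. Part (ii) has a milder looseness in the ``if'' direction (when $\mathcal{A}_1=\mathcal{A}_1^{-1}$ the identity does not immediately force $h=3$), but that is repairable; the equality statement in (i) is the real problem, and it afflicts the paper's own unwritten proof as well.
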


\textit{Case II:} $d=n^2+1\equiv 1 \pmod 8$.

\begin{corollary}\label{cor7.4}
Let $n=2^sp^t$ with $s,t>1$ and $p$ be an odd prime. If $h(d)$ 
is odd, then $h(d) \geq 2(t+s)-1$ and equality holds if and only 
if 
\begin{eqnarray*}
\zeta_{k_n}(-1) &=& \frac{n^3+14n}{360}+\sum_{r=1}^{t} 
\frac{n^3+n(4p^{4r}+10p^{2r})}{180p^{2r}}\\
&+&\sum_{j=1}^{s-1} \frac{n^3+n(4\times2^{4j}+10\times2^{2j})}
{180\times2^{2j}}.
\end{eqnarray*}
\end{corollary}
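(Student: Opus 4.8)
The plan is to re-run the argument of Theorem \ref{thm3.3} and then spend the hypothesis $2\nmid h(d)$ to repair the three places where that proof is deliberately lossy. First I would reinstate its two ideal classes: since $d=n^2+1\equiv 1\pmod 8$ the prime $2$ splits as in \eqref{eqn3.3} and $p$ splits as in \eqref{eqn3.1}, and I take $\mathcal{A}$ to be the class of $\mathfrak{a}=\left(p,\frac{1+\sqrt d}{2}\right)$ and $\mathcal{B}$ that of $\mathfrak{b}=\left(2,\frac{1+\sqrt d}{2}\right)$. Lemmas \ref{lem2.1}, \ref{lem2.2}, \ref{lem3.1}, \ref{lem3.2} and Theorem \ref{thm2.1} give
$$\zeta_{k_n}(-1,\mathcal{A}^{\pm r})=\frac{n^3+n(4p^{4r}+10p^{2r})}{360p^{2r}}\quad(1\le r\le t),\qquad \zeta_{k_n}(-1,\mathcal{B}^{\pm j})=\frac{n^3+n(4\cdot 2^{4j}+10\cdot 2^{2j})}{360\cdot 2^{2j}}\quad(1\le j\le s-1),$$
and comparison with $\zeta_{k_n}(-1,\mathcal{P})=\frac{n^3+14n}{360}$ shows, exactly as in Theorem \ref{thm3.3}, that $\mathcal{A}^{r}$ is non-principal for $1\le r\le t$, that $\mathcal{B}^{j}$ is non-principal for $1\le j\le s-1$, that $\mathcal{A}^{r}\ne\mathcal{A}^{-u}$ and $\mathcal{B}^{i}\ne\mathcal{B}^{-j}$ whenever the exponents differ, and that $\mathcal{A}^{r}=\mathcal{B}^{j}$ with $1\le r\le t,\ 1\le j\le s-1$ can hold only for $(r,j)=(t,s-1)$.

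Second, I would bring in $2\nmid h(d)$. Then $|\mathcal{A}|$ and $|\mathcal{B}|$ are odd, so $\mathcal{A}^{2r}=\mathcal{P}$ would force $\mathcal{A}^{r}=\mathcal{P}$; hence $\mathcal{A}^{r}\ne\mathcal{A}^{-r}$ for $1\le r\le t$ and $\mathcal{B}^{j}\ne\mathcal{B}^{-j}$ for $1\le j\le s-1$. So $\{\mathcal{P}\}\cup\{\mathcal{A}^{\pm r}\}_{r=1}^{t}$ consists of $2t+1$ distinct classes and $\{\mathcal{P}\}\cup\{\mathcal{B}^{\pm j}\}_{j=1}^{s-1}$ of $2s-1$ distinct classes; combined with the last point above, their union has at least $1+2t+2(s-1)=2(t+s)-1$ elements unless $\mathcal{A}^{t}=\mathcal{B}^{\pm(s-1)}$, and in the absence of that identification the bound $h(d)\ge 2(t+s)-1$ is immediate.

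Disposing of the identification $\mathcal{A}^{t}=\mathcal{B}^{\pm(s-1)}$ is the \emph{main obstacle}. Replacing $\mathcal{B}$ by $\mathcal{B}^{-1}$ if necessary, suppose $\mathcal{A}^{t}=\mathcal{B}^{s-1}\ne\mathcal{P}$ and work inside the abelian group $\mathfrak{C}(k_n)$. If neither of $\langle\mathcal{A}\rangle,\langle\mathcal{B}\rangle$ contains the other, then $\langle\mathcal{A}\rangle\cap\langle\mathcal{B}\rangle$ has index $\ge3$ in each (the least prime divisor of an odd order is $\ge3$), so $h(d)\ge |\mathcal{A}||\mathcal{B}|/|\langle\mathcal{A}\rangle\cap\langle\mathcal{B}\rangle|\ge 3\max(|\mathcal{A}|,|\mathcal{B}|)\ge 3\max(2t+1,2s-1)\ge 2(t+s)-1$. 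If instead, say, $\langle\mathcal{B}\rangle\subseteq\langle\mathcal{A}\rangle$, then all $\mathcal{B}^{\pm j}$ lie in $\langle\mathcal{A}\rangle$, and by the coincidence rule only $\mathcal{A}^{\pm t}$ among the classes $\mathcal{A}^{\pm r}$ ($1\le r\le t$) is hit; hence $\langle\mathcal{A}\rangle$ already contains the $2(t+s)-3$ distinct classes $\{\mathcal{P}\}\cup\{\mathcal{A}^{\pm r}\}\cup\{\mathcal{B}^{\pm j}\}$. If $\langle\mathcal{A}\rangle\subsetneq\mathfrak{C}(k_n)$ then $h(d)\ge 2|\mathcal{A}|\ge 2\bigl(2(t+s)-3\bigr)\ge 2(t+s)-1$; otherwise $\mathfrak{C}(k_n)=\langle\mathcal{A}\rangle$ and, since $2(t+s)-2$ is even, it only remains to exclude $|\mathcal{A}|=2(t+s)-3$. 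For that I would write $\mathcal{B}=\mathcal{A}^{m}$, note $m(s-1)\equiv t\pmod{|\mathcal{A}|}$, and observe that then $\{\pm m,\dots,\pm(s-2)m\}\pmod{2(t+s)-3}$ would have to be precisely the complementary interval $\{t+1,\dots,t+2s-4\}$, which an elementary congruence computation (pinning $m$ between incompatible bounds, using $s,t>1$ and treating the small cases $s=2$, $t=2$ by hand) should forbid. I would also use genus theory as a lever: since $d=n^2+1$ is square-free and $h(d)$ is odd, $d$ must in fact be a rational prime, which severely restricts the configurations that can occur and helps finish any residual case.

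Finally, for the equivalence I would use positivity of partial zeta values, exactly as in \S3: each $\zeta_{k_n}(-1,\mathfrak{X})\ge 0$, with strict positivity for the classes at hand, so dropping classes from $\zeta_{k_n}(-1)=\sum_{\mathfrak{X}}\zeta_{k_n}(-1,\mathfrak{X})$ only decreases it. Keeping the $2(t+s)-1$ classes exhibited above,
$$\zeta_{k_n}(-1)\ \ge\ \zeta_{k_n}(-1,\mathcal{P})+\sum_{r=1}^{t}2\,\zeta_{k_n}(-1,\mathcal{A}^{r})+\sum_{j=1}^{s-1}2\,\zeta_{k_n}(-1,\mathcal{B}^{j}),$$
and substituting the partial values from the first paragraph turns the right-hand side into exactly the identity displayed in the corollary. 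Since all summands are positive, equality forces $\mathfrak{C}(k_n)$ to consist of precisely this set of classes, i.e.\ $h(d)=2(t+s)-1$; conversely, once the coincidence above has been excluded, $h(d)=2(t+s)-1$ makes the listed classes exhaust $\mathfrak{C}(k_n)$ and gives equality. Together with the bound, this is the corollary.
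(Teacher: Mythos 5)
You have correctly spotted that the naive adaptation of the paper's proof of Corollary 7.2(ii) stalls at the possible coincidence $\mathcal{A}^{t}=\mathcal{B}^{\pm(s-1)}$ --- more care than the paper itself takes, since it offers no proof of this corollary beyond ``the other cases can be handled similarly''. But your plan to \emph{dispose of} that coincidence cannot succeed, because under the hypotheses of the corollary it always occurs. Indeed, $\alpha=\frac{1+\sqrt{d}}{2}$ has norm $\frac{1-d}{4}=-\frac{n^{2}}{4}=-2^{2s-2}p^{2t}$; since $\alpha\in\mathfrak{a}\cap\mathfrak{b}$ while $\alpha=1-\frac{1-\sqrt{d}}{2}$ lies in neither $\overline{\mathfrak{a}}=\left(p,\frac{1-\sqrt{d}}{2}\right)$ nor $\overline{\mathfrak{b}}=\left(2,\frac{1-\sqrt{d}}{2}\right)$, one gets $(\alpha)=\mathfrak{a}^{2t}\mathfrak{b}^{2s-2}$, hence $\mathcal{A}^{2t}\mathcal{B}^{2s-2}=\mathcal{P}$. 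When $h(d)$ is odd, squaring is an automorphism of $\mathfrak{C}(k_n)$, so $\mathcal{A}^{t}\mathcal{B}^{s-1}=\mathcal{P}$, i.e.\ $\mathcal{A}^{\pm t}=\mathcal{B}^{\mp(s-1)}$ \emph{necessarily}. Your exhibited set therefore contains only $2(t+s)-3$ distinct classes, and your displayed inequality is false: for $n=36$, $d=1297$ (a prime, so $h(d)$ is odd; here $s=t=2$, $p=3$), Proposition 2.1 gives $\zeta_{k_n}(-1)=289$, whereas the right-hand side of your inequality equals $131+2\cdot 19+2\cdot 35+2\cdot 35=309$.

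The same forced relation sinks the last branch of your case analysis. When $\mathfrak{C}(k_n)=\langle\mathcal{A}\rangle$ you must exclude $|\mathcal{A}|=2(t+s)-3$; but for $s=2$ the relation reads simply $\mathcal{B}=\mathcal{A}^{-t}$ and the range $1\le j\le s-2$ is empty, so there is no congruence left to violate and no ``elementary congruence computation'' can forbid this configuration. The genus-theoretic observation that $d$ must be prime is correct but gives no additional hold on the group structure. Note also that even granting the lower bound, the equality criterion you (and the corollary) write down double-counts the pair $\mathcal{A}^{\pm t}=\mathcal{B}^{\mp(s-1)}$: if $h(d)=2(t+s)-1$ the correct identity would replace the two terms with $j=s-1$ (equivalently $r=t$) by twice the partial zeta value of the two extra classes, about which the argument says nothing. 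So the proof cannot be completed along these lines; either the bound must be obtained by a genuinely different mechanism (for instance by exploiting the total value $\zeta_{k_n}(-1)$, as the $d=1297$ computation suggests), or the statement itself needs amending.
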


\begin{corollary}\label{cor7.5}
Let $n=2^s{p_{1}}^{a_{1}}{p_{2}}^{a_{2}}\cdots{p_{m}}^{a_{m}}$ 
with $p_{i}$'s distinct odd primes and  $s,m \geq 2$. If $h(d)$ 
is odd, then $h(d) \geq 2(a_{1}+a_{2}+\cdots+ a_{m})+2s-1$ and 
equality holds if and only if  
\begin{eqnarray*}
\zeta_{k_n}(-1)&=&\frac{n^3+14n}{360}+\sum_{\substack{1 \leq i 
\leq m\\ 1 \leq r_{i} \leq a_{i}}} \frac{n^3+n(4{p_{i}}^{4r_{i}}
+10{p_{i}}^{2r_{i}})}{180{p_{i}}^{2r_{i}}}\\
&+&\sum_{j=1}^{s-1} \frac{n^3+n(4\times2^{4j}+10\times2^{2j})}
{180\times2^{2j}}.
\end{eqnarray*}
\end{corollary}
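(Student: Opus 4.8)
The plan is to run the argument behind Theorem~\ref{thm3.4}, while exploiting the hypothesis that $h(d)$ is odd to eliminate the coincidences responsible for its correction terms $-m$, $-2$. Since $s\ge 2$ we have $4\mid n$, so $d=n^2+1\equiv 1\pmod 8$ and we are in the setting of $\S 3$: the prime $2$ splits as in \eqref{eqn3.3} and each $p_i$ splits as in \eqref{eqn3.1}. Let $\mathcal{A}_i$ be the ideal class containing $\mathfrak{a}_i=\left(p_i,\frac{1+\sqrt d}{2}\right)$ and $\mathcal{B}$ the class containing $\mathfrak{b}=\left(2,\frac{1+\sqrt d}{2}\right)$. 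First I would record, exactly as in the proofs of Theorems~\ref{thm3.3}--\ref{thm3.4} via Lemmas~\ref{lem2.1}, \ref{lem2.2}, \ref{lem3.1}, \ref{lem3.2} and Theorem~\ref{thm2.1}, the partial zeta values
$$\zeta_{k_n}(-1,\mathcal{A}_i^{r_i})=\frac{n^3+n(4p_i^{4r_i}+10p_i^{2r_i})}{360p_i^{2r_i}}=\zeta_{k_n}(-1,\mathcal{A}_i^{-r_i}),\qquad 1\le r_i\le a_i,$$
$$\zeta_{k_n}(-1,\mathcal{B}^{j})=\frac{n^3+n(4\times2^{4j}+10\times2^{2j})}{360\times2^{2j}}=\zeta_{k_n}(-1,\mathcal{B}^{-j}),\qquad 1\le j\le s-1,$$
together with $\zeta_{k_n}(-1,\mathcal{P})=\dfrac{n^3+14n}{360}$.

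The core is to show that the $2(a_1+\cdots+a_m)+2(s-1)+1$ ideal classes consisting of $\mathcal{P}$, the $\mathcal{A}_i^{\pm r_i}$ ($1\le i\le m$, $1\le r_i\le a_i$) and the $\mathcal{B}^{\pm j}$ ($1\le j\le s-1$) are pairwise distinct. Comparing the rational numbers above (and using $\zeta_{k_n}(-1,\mathcal C^{-1})=\zeta_{k_n}(-1,\mathcal C)$), any equality between two classes of this list forces one of the relations $n=2p_i^{r_i}$, $n=2^{j+1}$, $n=2p_i^{r_i+s_i}$, $n=2p_i^{r_i}p_l^{s_l}$ with $i\ne l$, $n=2^{j+1}p_i^{r_i}$, $n=2^{j+j'+1}$ (the identifications $\mathcal C=\mathcal C^{-1}$ being set aside), each of which contradicts either $v_2(n)=s\ge2$ or the fact that $n$ has at least two distinct odd prime factors ($m\ge2$); in particular each $\mathcal{A}_i^{\pm r_i}$ and each $\mathcal{B}^{\pm j}$ is non-principal. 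The remaining possibilities are the self-inverse identifications $\mathcal{A}_i^{r_i}=\mathcal{A}_i^{-r_i}$ and $\mathcal{B}^{j}=\mathcal{B}^{-j}$, and here the hypothesis enters: if $h(d)$ is odd then $\mathfrak{C}(k_n)$ has no element of order $2$, so $\mathcal{A}_i^{2r_i}=\mathcal P$ would give $\mathcal{A}_i^{r_i}=\mathcal P$ (the odd order of $\mathcal A_i$ divides $2r_i$, hence divides $r_i$), contradicting non-principality; likewise for $\mathcal B$. Note that the cross-coincidence $\mathcal{A}_i^{a_i}=\mathcal{B}^{s-1}$, which contributes to the $-4$ in Theorem~\ref{thm3.3}, cannot occur here regardless of parity, since it would force $n=2^sp_i^{a_i}$, impossible for $m\ge2$; so in the present range it is exactly the self-inverse identifications, hence the parity of $h(d)$, that govern the bound. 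Consequently all the listed classes are distinct and $h(d)\ge 2(a_1+\cdots+a_m)+2s-1$.

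For the equality statement I would use $\zeta_{k_n}(-1)=\sum_{\mathcal C}\zeta_{k_n}(-1,\mathcal C)$ over all ideal classes, together with the positivity of the partial zeta values at $-1$ that is used throughout $\S 3$. If $h(d)=2(a_1+\cdots+a_m)+2s-1$ the listed classes exhaust $\mathfrak{C}(k_n)$; pairing $\mathcal A_i^{r_i}$ with $\mathcal A_i^{-r_i}$ and $\mathcal B^{j}$ with $\mathcal B^{-j}$ and substituting the values above (so that $2\zeta_{k_n}(-1,\mathcal A_i^{r_i})=\frac{n^3+n(4p_i^{4r_i}+10p_i^{2r_i})}{180p_i^{2r_i}}$, etc.) yields
$$\zeta_{k_n}(-1)=\frac{n^3+14n}{360}+\sum_{\substack{1\le i\le m\\ 1\le r_i\le a_i}}\frac{n^3+n(4p_i^{4r_i}+10p_i^{2r_i})}{180p_i^{2r_i}}+\sum_{j=1}^{s-1}\frac{n^3+n(4\times2^{4j}+10\times2^{2j})}{180\times2^{2j}}.$$
Conversely, if this identity holds then $\zeta_{k_n}(-1)$ already equals the sum of the partial zeta values of the listed classes, so no further class can contribute (each would add a strictly positive amount), forcing $h(d)=2(a_1+\cdots+a_m)+2s-1$. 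I expect the main obstacle to be the bookkeeping in the second paragraph: one must verify systematically that none of the $\pm$ powers collide and, crucially, that the odd-order hypothesis removes every self-inverse identification — and only those — that had forced the weaker bounds in Theorem~\ref{thm3.4}.
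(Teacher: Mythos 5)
Your proof is correct and follows essentially the same route as the paper: the paper proves only Corollaries 7.1 and 7.2(ii) explicitly and declares the rest "similar," and your argument is precisely that similar argument — take the class inventory from Theorems \ref{thm3.3}--\ref{thm3.4}, use the partial zeta values to rule out all cross-collisions (each would force $n=2p_i^{r}$, $2^{j+1}p_i^{r}$, etc., impossible for $v_2(n)=s\ge 2$ and $m\ge 2$), and use the oddness of $h(d)$ to kill the self-inverse identifications $\mathcal{A}_i^{r}=\mathcal{A}_i^{-r}$, $\mathcal{B}^{j}=\mathcal{B}^{-j}$ that caused the corrections in Theorem \ref{thm3.4}. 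Your write-up is in fact more careful than the paper's one-line deferral, and the equality criterion via positivity of the partial zeta values matches the paper's standard mechanism.
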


\textit{Case III:} $d=n^2+1\equiv 2 \pmod 4$.

\begin{corollary}\label{cor7.6}
Let $n=p^t$ with $t \geq 1$ an integer. If $2t \leq h(d) \leq 
4t-2$, then the class group is cyclic. 
\end{corollary}

\textit{Case IV:} $d=n^2+4\equiv 5 \pmod 8$.

\begin{corollary}\label{cor7.7}
Let $n=p^t$ with $t > 1$ an integer. If $t \leq h(d) \leq 2t-1$, 
then the class group is cyclic. 
\end{corollary}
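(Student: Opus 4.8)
The plan is to use Theorem \ref{thm4.1} to exhibit a cyclic subgroup of $\mathfrak{C}(k_n)$ of order at least $t$, and then to close the gap with an elementary divisibility argument, in the same spirit as the proof of Theorem \ref{thm6.1}.

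First, since $d=n^2+4\equiv 5\pmod 8$ and $p\mid n$, the prime $p$ splits in $k_n$ as in \eqref{eqn4.1}; let $\mathcal{A}$ be the ideal class containing $\mathfrak{a}=\left(p,\frac{p+2+\sqrt{d}}{2}\right)$. As recorded in the proof of Theorem \ref{thm4.1}, combining Lemma \ref{lem2.1}, Lemma \ref{lem2.2}, Lemma \ref{lem4.1} and Theorem \ref{thm2.1} gives
\[
\zeta_{k_n}(-1,\mathcal{A}^{r})=\frac{n^3+n(p^{4r}+10p^{2r})}{360p^{2r}}=\zeta_{k_n}(-1,\mathcal{A}^{-r}),\qquad 1\le r\le t,
\]
and comparing with \eqref{eqn4.2} shows that $\mathcal{A}^{r}=\mathcal{P}$ would force $n=p^{r}$, i.e. $r=t$. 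Hence $\mathcal{A}^{r}$ is non-principal for every $1\le r<t$, and since $\mathcal{A}^{i}=\mathcal{A}^{j}$ for $0\le i<j\le t-1$ would give a principal power $\mathcal{A}^{j-i}$ with $1\le j-i\le t-1$, the classes $\mathcal{P},\mathcal{A},\dots,\mathcal{A}^{t-1}$ are distinct; thus $|\mathcal{A}|\ge t$.

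Second, $|\mathcal{A}|$ divides $h(d)$. If $|\mathcal{A}|$ were a proper divisor of $h(d)$, then $|\mathcal{A}|\le h(d)/2\le (2t-1)/2<t$, contradicting $|\mathcal{A}|\ge t$. Therefore $|\mathcal{A}|=h(d)$, so $\mathfrak{C}(k_n)=\langle\mathcal{A}\rangle$ is cyclic, as claimed.

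The argument is short, and no serious obstacle arises once Theorem \ref{thm4.1} is in hand; the only point needing a little care is the observation that a proper divisor of $h(d)$ cannot exceed $h(d)/2$, which is precisely what turns the two-sided bound $t\le h(d)\le 2t-1$ together with $|\mathcal{A}|\ge t$ into the equality $|\mathcal{A}|=h(d)$.
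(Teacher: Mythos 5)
Your proof is correct and follows essentially the same route as the paper: the author proves Corollary \ref{cor7.1} by taking the class $\mathcal{A}$ of the prime above $p$, invoking the lower bound $|\mathcal{A}|\geq t$ from the relevant theorem, and concluding $|\mathcal{A}|=h(d)$ from $|\mathcal{A}|\mid h(d)$ and $h(d)\leq 2t-1$, stating that Corollary \ref{cor7.7} is handled identically via Theorem \ref{thm4.1}. Your write-up merely makes explicit the derivation of $|\mathcal{A}|\geq t$ and the observation that a proper divisor cannot exceed $h(d)/2$, both of which are implicit in the paper.
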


\begin{corollary}\label{cor7.8}
Let $n={p_{1}}^{a_{1}}{p_{2}}^{a_{2}}\cdots{p_{m}}^{a_{m}}$ with 
$p_{i}$'s distinct odd primes and  $m>2$. If $m$ is even, then 
$h(d) > 2(a_{1}+a_{2}+\cdots+ a_{m})-m+2$ and if $h(d)$ is odd, 
then $h(d)=2(a_{1}+a_{2}+\cdots+ a_{m})+1$ if and only if 
$$\zeta_{k_n}(-1)=\frac{n^3+11n}{360}+\sum_{\substack {1 \leq i 
\leq m\\ 1 \leq r_{i} \leq a_{i}}} \frac{n^3+n({p_{i}}^{4r_{i}}
+10{p_{i}}^{2r_{i}})}{180{p_{i}}^{2r_{i}}}.$$
\end{corollary}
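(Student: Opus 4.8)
The plan is to sharpen the counting used in the proof of Theorem~\ref{thm4.2}(i). Recall that each $p_i$ splits as in \eqref{eqn4.1}, that $\mathfrak{a}_i=\bigl(p_i,\tfrac{p_i+2+\sqrt d}{2}\bigr)\in\mathcal A_i$, and that Lemmas~\ref{lem2.1},~\ref{lem2.2},~\ref{lem4.1} and Theorem~\ref{thm2.1} give $\zeta_{k_n}(-1,\mathcal A_i^{r})=\tfrac{n^3+n(p_i^{4r}+10p_i^{2r})}{360p_i^{2r}}=\zeta_{k_n}(-1,\mathcal A_i^{-r})$ for $1\le r\le a_i$. Since $n$ is odd and the $p_i$ are distinct, one checks exactly as in Theorem~\ref{thm4.2} that these values are pairwise distinct over all pairs $(i,r)$ with $1\le r\le a_i$ and differ from $\zeta_{k_n}(-1,\mathcal P)=\tfrac{n^3+11n}{360}$ (equality with the last would force $n=p_i^{r}$, impossible since $m>2$). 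Hence the classes $\mathcal P$ and $\mathcal A_i^{\pm r}$ are pairwise distinct, the only possible coincidence being $\mathcal A_i^{a_i}=\mathcal A_i^{-a_i}$, i.e.\ $\mathcal A_i^{2a_i}=\mathcal P$. Setting $e=\#\{i:\mathcal A_i^{2a_i}\ne\mathcal P\}$, we have produced exactly $2(a_1+\cdots+a_m)-m+1+e$ distinct ideal classes, so $h(d)\ge 2(a_1+\cdots+a_m)-m+1+e$; equality forces these classes to exhaust $\mathfrak C(k_n)$, whence, using $\zeta_{k_n}(-1)=\sum_{\mathfrak C}\zeta_{k_n}(-1,\mathfrak C)$ together with positivity of every partial value and $\zeta_{k_n}(-1,\mathcal A_i^{r})=\zeta_{k_n}(-1,\mathcal A_i^{-r})$, one reads off the value of $\zeta_{k_n}(-1)$, and conversely any such identity for $\zeta_{k_n}(-1)$ forces the equality (an omitted class would contribute a strictly positive extra term).

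For the second assertion assume $h(d)$ is odd. Then each $|\mathcal A_i|$ is odd and divides $h(d)$; if $\mathcal A_i^{2a_i}=\mathcal P$ then $|\mathcal A_i|\mid 2a_i$, hence $|\mathcal A_i|\mid a_i$, contradicting $\mathcal A_i^{a_i}\ne\mathcal P$. Thus $e=m$, so $h(d)\ge 2(a_1+\cdots+a_m)+1$, and by the last remark of the previous paragraph equality holds if and only if $\zeta_{k_n}(-1)=\tfrac{n^3+11n}{360}+\sum_{1\le i\le m,\ 1\le r_i\le a_i}\tfrac{n^3+n(p_i^{4r_i}+10p_i^{2r_i})}{180p_i^{2r_i}}$, which is the asserted criterion.

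It remains to treat $m$ even. Theorem~\ref{thm4.2}(i) gives $h(d)\ge 2(a_1+\cdots+a_m)-m+1$; since this bound is odd while an odd class number is $\ge 2(a_1+\cdots+a_m)+1$ by the preceding paragraph, in fact $h(d)\ge 2(a_1+\cdots+a_m)-m+2$, and only equality has to be excluded. If $h(d)$ is odd this is clear, as $2(a_1+\cdots+a_m)+1>2(a_1+\cdots+a_m)-m+2$ for $m\ge 2$. If $h(d)$ is even: when $e\ge 2$ the count $2(a_1+\cdots+a_m)-m+1+e$ already exceeds $2(a_1+\cdots+a_m)-m+2$; when $e\le 1$ at least $m-1$ of the classes $\mathcal A_i^{a_i}$ have exact order $2$ and are pairwise distinct, so $2^{\lceil\log_2 m\rceil}\mid h(d)$, and one combines this divisibility with $h(d)\ge 2(a_1+\cdots+a_m)-m+2$ and with the exact count. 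The genuine obstacle is the residual case $e\le 1$ with $h(d)=2(a_1+\cdots+a_m)-m+2$ exactly: then the exhibited classes fill $\mathfrak C(k_n)$, so $\zeta_{k_n}(-1)$ is pinned to a definite value built from the partial zetas above, and one must contradict this value against Siegel's evaluation in Proposition~\ref{prop2.1}; making this contradiction uniform in $m$ (the divisibility shortcut leaves a few sporadic $m$, such as $m=6$ with all $a_i=1$, untreated) is where the real work lies.
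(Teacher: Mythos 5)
Your proof of the second assertion (the odd-$h(d)$ criterion) is correct and follows essentially the paper's own route: the paper proves only Corollaries \ref{cor7.1} and \ref{cor7.2}(ii) and declares the remaining cases ``similar'', and your parity argument --- $|\mathcal{A}_i|$ divides the odd number $h(d)$, hence is odd, hence cannot divide $2a_i$ without dividing $a_i$, forcing all $2a_i$ classes $\mathcal{A}_i^{\pm r}$ to be distinct and giving the count $1+2\sum a_i$ together with the partial-zeta criterion for equality --- is exactly the template of the proof of Corollary \ref{cor7.2}(ii) transported to the $n^2+4$ setting. The distinctness checks via the partial zeta values (equality with $\zeta_{k_n}(-1,\mathcal{P})$ forcing $n=p_i^r$, cross-equalities forcing $n=p_i^{r}p_j^{s}$ or $n=p_i^{r+s}$, all impossible for $m>2$) are also the paper's.

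The gap is in the first assertion. Your counting, like the paper's, yields only $h(d)\ge 2(a_1+\cdots+a_m)-m+2$ for $m$ even --- and this non-strict bound follows in one line from Theorem \ref{thm4.2}(i), which already records that equality in $h(d)\ge 2\sum a_i-m+1$ can occur only when $m$ is odd, so your detour through the parameter $e$ is more labor than needed for that much. The statement as printed, however, asserts the strict inequality $h(d)>2\sum a_i-m+2$, and your last paragraph does not prove it: the divisibility-by-a-power-of-$2$ shortcut leaves residual cases (you name $m=6$, all $a_i=1$, yourself), and the closing sentence explicitly defers ``the real work''. So as a proof of the literal statement this is incomplete. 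It is worth saying that the paper supplies no argument for the strict version either: the exactly parallel Corollary \ref{cor7.2}(i) claims only ``$\ge$'', and the blanket ``handled similarly'' can likewise only produce ``$\ge$'' here. You should either close the residual case ($e\le 1$ with $h(d)=2\sum a_i-m+2$, which appears to need input beyond partial zeta values, e.g.\ genus theory or the classification of principal ideals of small norm in Richaud--Degert fields) or flag the ``$>$'' as a probable misprint for ``$\ge$''; as written, neither you nor the paper establishes the strict bound.
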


\begin{corollary}\label{cor7.9}
Let $n={p_{1}}^{a_{1}}{p_{2}}^{a_{2}}$ with $p_{1},p_{2}$ be 
distinct odd primes. Then the following holds:
\begin{itemize}
\item[(i)] If $a_1 \text{ or } a_2>1$ and 
$h(d)$ is odd, then $h(d) \geq 2(a_{1}+a_{2})+1$. Equality 
holds if and only if 
$$\zeta_{k_n}(-1)=\frac{n^3+11n}{360}+\sum_{\substack {1 \leq i 
\leq 2\\ 1 \leq r_{i} \leq a_{i}}} \frac{n^3+n({p_{i}}^{4r_{i}}
+10{p_{i}}^{2r_{i}})}{180{p_{i}}^{2r_{i}}}.$$

\item[(ii)] If $a_1=a_2=1$ then $h(d)=3$ if and only if
$$\zeta_{k_n}(-1)=\frac{n^3+11n}{360}+ \frac{n^3+n({p_{i}}
^{4}+10{p_{i}}^2)}{180p_{i}},$$
for any $i$.

\end{itemize}
\end{corollary}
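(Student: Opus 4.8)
The plan is to bootstrap Theorem~\ref{thm4.2}(ii) with the parity of $h(d)$ and the partial zeta values already at hand for $k_n=\mathbb{Q}(\sqrt{n^2+4})$. Write $n=p_1^{a_1}p_2^{a_2}$; each $p_i$ splits as in \eqref{eqn4.1}, and we let $\mathcal{A}_i$ denote the ideal class of $\mathfrak{a}_i=\left(p_i,\frac{p_i+2+\sqrt{d}}{2}\right)$. By Lemmas~\ref{lem4.1},~\ref{lem2.1},~\ref{lem2.2} and Theorem~\ref{thm2.1},
\[
\zeta_{k_n}(-1,\mathcal{A}_i^{r})=\frac{n^3+n(p_i^{4r}+10p_i^{2r})}{360\,p_i^{2r}}=\zeta_{k_n}(-1,\mathcal{A}_i^{-r}),\qquad 1\le r\le a_i .
\]
Comparing these with $\zeta_{k_n}(-1,\mathcal{P})=\frac{n^3+11n}{360}$ and with one another, exactly as in the proof of Theorem~\ref{thm4.2}, one sees that $\mathcal{A}_i^{r}\ne\mathcal{P}$ for $1\le r\le a_i$, that $\mathcal{A}_i^{r}\ne\mathcal{A}_i^{\pm s}$ for $1\le r\ne s\le a_i$, and that $\mathcal{A}_1^{r}=\mathcal{A}_2^{\pm s}$ can hold only when $n=p_1^{r}p_2^{s}$, i.e. only when $r=a_1$ and $s=a_2$. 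In particular $|\mathcal{A}_i|\ge 2a_i$, and the only coincidence between $\langle\mathcal{A}_1\rangle$ and $\langle\mathcal{A}_2\rangle$ not forced to be $\mathcal{P}$ is $\mathcal{A}_1^{a_1}=\mathcal{A}_2^{\pm a_2}$.

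Now use that $h(d)$ is odd. Then $\mathfrak{C}(k_n)$ has no element of order $2$, so every $|\mathcal{A}_i|$ is odd; together with $|\mathcal{A}_i|\ge 2a_i$ this forces $|\mathcal{A}_i|\ge 2a_i+1$, and hence the $2a_i$ classes $\mathcal{A}_i^{\pm1},\dots,\mathcal{A}_i^{\pm a_i}$ are pairwise distinct and all $\ne\mathcal{P}$. If one shows $\langle\mathcal{A}_1\rangle\cap\langle\mathcal{A}_2\rangle=\{\mathcal{P}\}$, then $h(d)\ge|\mathcal{A}_1|+|\mathcal{A}_2|-1\ge 2(a_1+a_2)+1$; moreover the $2(a_1+a_2)+1$ distinct classes $\mathcal{P}$ and $\mathcal{A}_i^{\pm r}$ $(i=1,2,\ 1\le r\le a_i)$ have partial zeta values summing to
\[
\frac{n^3+11n}{360}+\sum_{\substack{1\le i\le 2\\ 1\le r_i\le a_i}}\frac{n^3+n(p_i^{4r_i}+10p_i^{2r_i})}{180\,p_i^{2r_i}},
\]
since $\zeta_{k_n}(-1,\mathcal{A}_i^{r})+\zeta_{k_n}(-1,\mathcal{A}_i^{-r})=2\zeta_{k_n}(-1,\mathcal{A}_i^{r})$; and, each partial Dedekind zeta value being positive (as used implicitly throughout \S3--\S5), $\zeta_{k_n}(-1)=\sum_{\mathfrak{A}}\zeta_{k_n}(-1,\mathfrak{A})$ dominates this sum, with equality precisely when these classes exhaust $\mathfrak{C}(k_n)$, i.e. when $h(d)=2(a_1+a_2)+1$. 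This is the equality clause of (i).

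Thus the whole of (i) reduces to proving $\langle\mathcal{A}_1\rangle\cap\langle\mathcal{A}_2\rangle=\{\mathcal{P}\}$, equivalently to ruling out $\mathcal{A}_1^{a_1}=\mathcal{A}_2^{\pm a_2}$, when $a_1$ or $a_2$ exceeds $1$; this is the step I expect to be the main obstacle. After replacing $\mathcal{A}_2$ by $\mathcal{A}_2^{-1}$ if necessary, suppose $\mathcal{C}:=\mathcal{A}_1^{a_1}=\mathcal{A}_2^{a_2}$ with, say, $a_1\ge2$. Then $\langle\mathcal{C}\rangle\subseteq\langle\mathcal{A}_1\rangle\cap\langle\mathcal{A}_2\rangle$, and since $\langle\mathcal{A}_1\rangle\cup\langle\mathcal{A}_2\rangle$ has $|\mathcal{A}_1|+|\mathcal{A}_2|-|\langle\mathcal{A}_1\rangle\cap\langle\mathcal{A}_2\rangle|$ elements, the bound $|\mathcal{A}_i|\ge 2a_i+1$ shows $h(d)\ge 2(a_1+a_2)+1$ would follow once $\mathcal{C}=\mathcal{P}$, a contradiction. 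I would attempt this by arguing that the coincidence forces $\langle\mathcal{A}_1\rangle=\langle\mathcal{A}_2\rangle$ (a cyclic group of odd order in which both $\mathcal{A}_1$ and $\mathcal{A}_2$ have exponents $\le a_i$ realizing $\mathcal{C}$), writing $\mathcal{A}_2=\mathcal{A}_1^{c}$, and feeding the relation $\mathcal{A}_1^{a_1}=\mathcal{A}_1^{ca_2}$ back into the dichotomy of the first paragraph until it forces $\mathcal{A}_1=\mathcal{A}_2$ and hence $n=p_1p_2$, against $a_1\ge2$. The delicate point is that the partial zeta values do \emph{not} separate $\mathcal{A}_1^{a_1}$ from $\mathcal{A}_2^{\pm a_2}$ — they agree exactly because $n=p_1^{a_1}p_2^{a_2}$ — so the argument must be purely group-theoretic, and it is here, particularly in the sub-case $\langle\mathcal{A}_1\rangle=\langle\mathcal{A}_2\rangle$, that both hypotheses ($h(d)$ odd, and $a_1$ or $a_2>1$) are genuinely used.

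Finally, part (ii), where $a_1=a_2=1$ and $n=p_1p_2$, is easier: $\mathcal{A}_1,\mathcal{A}_2$ are non-principal, so $h(d)\ge2$, and $\zeta_{k_n}(-1,\mathcal{A}_1)=\zeta_{k_n}(-1,\mathcal{A}_2)$ by the first paragraph. If $h(d)=3$ then $\mathfrak{C}(k_n)\cong\mathbb{Z}_3$, each $\mathcal{A}_i$ is a generator, and $\zeta_{k_n}(-1)=\zeta_{k_n}(-1,\mathcal{P})+2\zeta_{k_n}(-1,\mathcal{A}_i)$, the displayed value (for either $i$). Conversely, if $\zeta_{k_n}(-1)$ equals that value, then positivity of partial zeta values gives $\zeta_{k_n}(-1)\ge\zeta_{k_n}(-1,\mathcal{P})+2\zeta_{k_n}(-1,\mathcal{A}_i)$ as soon as $\mathcal{A}_i$ has order $\ge3$, with equality only when $\mathcal{P},\mathcal{A}_i,\mathcal{A}_i^{-1}$ exhaust $\mathfrak{C}(k_n)$; the remaining finitely many even possibilities for $h(d)$ are eliminated by the same positivity (no nontrivial class can contribute a vanishing zeta value), leaving $h(d)=3$.
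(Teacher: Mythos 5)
Your setup (the partial zeta values for $\mathcal{A}_i^{\pm r}$, the deduction $|\mathcal{A}_i|\ge 2a_i+1$ from $|\mathcal{A}_i|\ge 2a_i$ together with the oddness of $h(d)$, and the observation that the only unresolved coincidence is $\mathcal{A}_1^{a_1}=\mathcal{A}_2^{\pm a_2}$) follows exactly the template the paper uses for Corollary 7.1 and Corollary 7.2(ii), and you have correctly located the one place where ``the other cases can be handled similarly'' is not automatic: for $m=2$ the zeta values do not separate $\mathcal{A}_1^{a_1}$ from $\mathcal{A}_2^{\pm a_2}$, precisely because $n=p_1^{a_1}p_2^{a_2}$. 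The problem is that you do not close this step --- you only describe how you ``would attempt'' it --- and, more seriously, the route you propose cannot succeed. The element $\alpha=\frac{(n-2)+\sqrt{d}}{2}$ lies in $\mathcal{O}_{k_n}$ (as $n$ is odd and $d\equiv 1\pmod 4$) and satisfies $N(\alpha)=\frac{(n-2)^2-d}{4}=-n$. Hence $(\alpha)$ is an integral ideal of norm $n=p_1^{a_1}p_2^{a_2}$; since no rational prime divides $\alpha$, its factorization is $(\alpha)=\mathfrak{q}_1^{a_1}\mathfrak{q}_2^{a_2}$ with $\mathfrak{q}_i\in\{\mathfrak{a}_i,\overline{\mathfrak{a}}_i\}$, which yields the relation $\mathcal{A}_1^{\pm a_1}\mathcal{A}_2^{\pm a_2}=\mathcal{P}$ in the class group. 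So the coincidence $\mathcal{A}_1^{a_1}=\mathcal{A}_2^{\pm a_2}$ is not an exceptional configuration to be ruled out: it always holds, $\langle\mathcal{A}_1\rangle\cap\langle\mathcal{A}_2\rangle$ is never trivial, and your reduction of part (i) to proving triviality of that intersection is a reduction to a false statement. With the coincidence present, the classes $\mathcal{P},\mathcal{A}_1^{\pm r},\mathcal{A}_2^{\pm s}$ number only $2(a_1+a_2)-1$, so the counting as you (and the paper) have arranged it gives $h(d)\ge 2(a_1+a_2)-1$, not $2(a_1+a_2)+1$; any proof of the stated bound must exploit the forced relation $\mathcal{A}_1^{a_1}=\mathcal{A}_2^{\mp a_2}$ together with the non-coincidences at lower exponents, rather than ignore it.

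A secondary, smaller issue: in part (ii) your converse direction dismisses the even values of $h(d)$ by ``positivity,'' but the delicate case is $|\mathcal{A}_1|=2$, where $\mathcal{A}_1=\mathcal{A}_1^{-1}$ contributes only one term $\zeta_{k_n}(-1,\mathcal{A}_1)$ to the class-number formula; equality $\zeta_{k_n}(-1)=\zeta_{k_n}(-1,\mathcal{P})+2\zeta_{k_n}(-1,\mathcal{A}_1)$ then only says the remaining classes contribute a total of $\zeta_{k_n}(-1,\mathcal{A}_1)>0$, which is not by itself a contradiction. This needs an explicit argument (for instance, via the paper's Theorem 4.2(ii) characterization of $h(d)=2$, or a lower bound on the partial zeta value of any nontrivial class).
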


We give the proof of Corollary \eqref{cor7.1} and Corollary 
\eqref{cor7.2}(ii) and the other cases can be handled similarly.

\begin{proof}[Proof of Corollary 7.1]
If $\mathfrak{a}=\left(p,\frac{1+\sqrt{d}}{2}\right) \in 
\mathcal{A}$ then by Theorem \eqref{thm3.1}, $|\mathcal{A}|\geq 
t$. Since $|\mathcal{A}|\big|h(d)$, $|\mathcal{A}|\geq t$ and 
$h(d) \leq 2t-1$, therefore $|\mathcal{A}|=h(d)$. Hence the class 
group is cyclic.
\end{proof}

\begin{proof}[Proof of Corollary 7.2(ii)]
By Theorem \eqref{thm3.2}, if $\mathfrak{a}_i=\left(p_i,\frac{1+
\sqrt{d}}{2}\right) \in \mathcal{A}_i$, then $|\mathcal{A}_i|\geq 
2a_i$. If $h(d)$ is odd, then $|\mathcal{A}_i|\geq 
2a_i+1$ (as $|\mathcal{A}|\big|h(d)$). Hence  $h(d) 
\geq 2(a_{1}+a_{2}+\cdots+ a_{m})+1$ and equality holds if and 
only if 
$$\zeta_{k_n}(-1)=\zeta_{k_n}(-1, \mathcal{P})+\sum_{\substack{1 
\leq i \leq m\\ 1 \leq r_{i} \leq a_{i}}} 2\zeta_{k_n}(-1, 
\mathcal{A}_i^{r_i}).$$
Thus equality holds if and only if
$$\zeta_{k_n}(-1)=\frac{n^3+14n}{360}+\sum_{\substack{1 \leq i 
\leq m\\ 1 \leq r_{i} \leq a_{i}}} \frac{n^3+n(4{p_{i}}^{4r_{i}}
+10{p_{i}}^{2r_{i}})}{180{p_{i}}^{2r_{i}}}.$$
\end{proof}

\section{\textbf{Concluding remarks}}
We believe that our method should go through for other R-D type real quadratic fields also. It will be interesting to extend these reults for other real 
quadratic fields, whose fundamental unit is known. Then one can try to reduce 
the class number 1 problem for that particular family to its subfamily, like 
we did for Chowla and Yokoi's conjecture. One can also say more 
clearly about the prime power order class group. In $\S6$, 
if $r$ is small, say $2$ or $3$, then in most cases we can 
exactly determine the class group by just looking at the exponents 
of prime factors.

\section*{\textbf{Acknowledgements}} 
\noindent I am  grateful to my supervisor Prof. K. Chakraborty for his constant support, comments and suggestions while doing this project. I would like to express my gratitude to Dr. Azizul Hoque for many fruitful discussions. I would also like to thank my friend and colleague Mr. Rishabh Agnihotri for his valuable suggestions and discussions. This work is partially supported by Infosys grant.

\end{document}